\crefname{exmp}{Example}{Examples}
\newtheoremstyle{mytheoremstyle} 
    {5pt}                    
    {5pt}                    
    {\itshape}                   
    {\parindent}                           
    {\bf}                   
    {.}                          
    {.5em}                       
    {}  
\theoremstyle{mytheoremstyle}
\newtheorem{theorem}{Theorem}[section]
\newtheorem{lemm}[theorem]{Lemma}
\newtheorem{prop}[theorem]{Proposition}
\newtheorem{coro}[theorem]{Corollary}
\newtheorem{prob}[theorem]{Problem}
\newcounter{claims}
\newtheorem{claim}[claims]{Claim}
\newtheoremstyle{mytdefintionstyle} 
    {5pt}                    
    {5pt}                    
    {\rm}                   
    {\parindent}                           
    {\bf}                   
    {.}                          
    {.5em}                       
    {}  
\theoremstyle{remark}
\newtheorem{rmrk}[theorem]{Remark}
\theoremstyle{mytdefintionstyle}
\newtheorem{defn}[theorem]{Definition}
\newtheoremstyle{exmp_contd}
    {5pt}                    
    {5pt}                    
    {\rm}                   
    {\parindent}                           
    {\bf}                   
    {.}                          
    {.5em}                       
    {\thmname{#1}\ \thmnumber{ #2}\thmnote{#3}\ (continued)}  
\theoremstyle{exmp_contd}
\DeclareMathOperator{\de}{def}
\DeclareMathOperator{\spa}{span}
\newcommand\A{\mathcal{A}}
\newcommand\C{\mathbb{C}}
\newcommand\F{\mathbb{F}}
\newcommand{\Q}{\mathbb{Q}}
\newcommand\R{\mathbb{R}}
\newcommand\N{\mathbb{N}}
\renewcommand\phi{\varphi}
\DeclareMathOperator\rk{rk}
\newcommand{\U}{\mathcal{U}}
\definecolor{darkgray}{rgb}{0.3,0.3,0.3}
\definecolor{LightGray}{gray}{0.9}
\newcommand{\topstrut}[1][1.2ex]{\setlength\bigstrutjot{#1}{\bigstrut[t]}}
\newcommand{\botstrut}[1][0.9ex]{\setlength\bigstrutjot{#1}{\bigstrut[b]}}
\definecolor{darkgreen}{rgb}{0.008,0.617,0.067}
\definecolor{brown}{rgb}{0.6,0.4,0.2}
\newif\ifjournalversion
\author{Lukas K\"uhne}
\author{Geva Yashfe}
\address{Einstein Institute of Mathematics, The Hebrew University of Jerusalem, Giv’at Ram, Jerusalem, 91904, Israel}
\email{\href{mailto:Lukas Kuehne<lukas.kuehne@mail.huji.ac.il>}{lukas.kuehne@mail.huji.ac.il}, \href{mailto:Geva Yashfe <geva.yashfe@mail.huji.ac.il>}{geva.yashfe@mail.huji.ac.il}}
\begin{document}

\title{Representability of matroids by $\bm{c}$-arrangements is undecidable}
\begin{abstract}
	For a natural number $c$, a $c$-arrangement is an arrangement of dimension $c$ subspaces satisfying the following condition: the sum of any subset of the subspaces has dimension a multiple of $c$.
	Matroids arising as normalized rank functions of $c$-arrangements are also known as multilinear matroids.
	We prove that it is algorithmically undecidable whether there exists a $c$ such that a given matroid has a $c$-arrangement representation, or equivalently whether the matroid is multilinear.
	It follows that certain network coding problems are also undecidable.
	In the proof, we introduce a generalized Dowling geometry to encode an instance of the uniform word problem for finite groups in matroids of rank three.
	The $c$-arrangement condition gives rise to some difficulties and their resolution is the main part of the paper.
	
\end{abstract}

\thanks{L.K. was supported by a Minerva fellowship of the Max-Planck-Society, the Studienstiftung des deutschen Volkes and by ERC StG 716424 - CASe. G.Y. was supported by ISF grant 1050/16.}

\keywords{%
matroids, subspace arrangements, $c$-arrangements, multilinear matroids, polymatroids, undecidability, word problem, von Staudt constructions, network coding.
}
\subjclass[2010]{%
	05B35, 52B40, 14N20, 52C35, 20F10, 03D40.
}
\maketitle

\vspace{-1cm}
\section{Introduction} \label{sec:Intro}
\subsection{$c$-Arrangement Representations}

The main objects discussed in this article are matroids and their generalizations polymatroids.
\begin{defn}\label{def:matroid}
A \emph{polymatroid} is a pair of a \emph{ground set} $E$ together with a \emph{rank function} $r:\mathcal{P}(E) \rightarrow \R_{\ge 0}$ which is
\begin{enumerate}[(i)]
	\item \emph{monotone}: $r(S) \leq r(T)$ for each $S\subseteq T \subseteq E$ and 
	\item \emph{submodular}: $r(S)+r(T) \ge r(S\cup T)+ r(S\cap T)$ for each $S,T\subseteq E$.
\end{enumerate}
The pair $(E,r)$ is called a \emph{matroid} if $r$ only takes integer values and additionally $r(S)\le |S|$ holds for any subset $S\subseteq E$.
\end{defn}
It is a classical problem to study matroid representations by vector configurations or equivalently hyperplane arrangements over some field, for an overview cf.\ \cite[Chapter~6]{Oxl11}.
Goresky and MacPherson extended this notion by introducing \emph{$c$-arrangements} in the context of stratified Morse theory~\cite{GM88}.
For a fixed integer $c\ge 1$, these are arrangements of dimension $c$ subspaces of a vector space such that the dimension of each sum of these subspaces has dimension a multiple of $c$.
This condition ensures that the associated rank function normalized by $\frac{1}{c }$ is the rank function of a matroid.
A matroid arising in this way is said to be \emph{representable as a $c$-arrangement}.
This generalizes the usual notion of matroid representations which are just $1$-arrangements.
Goresky and MacPherson showed for instance that the non-Pappus matroid which is not representable over any field is representable as a 2-arrangement over $\C$.
Matroids arising as representations of $c$-arrangement representations are also called \emph{multilinear matroids}.

Fix a field $\F$. The following is the multilinear representability problem over $\F$:
\begin{prob}\label{prob:c_representability}
	Given a matroid $M$, does there exist a $c\ge 1$ such that $M$ is representable as a $c$-arrangement over $\F$?
\end{prob}
This question was posed by Bj\"orner where he states ``the question of $c$-representability of matroids is open, but probably hopeless.''~\cite[p. 333]{Bjo92}.
The main contribution of this article is a computability theoretic result for $c$-arrangement representations.
\begin{theorem}\label{thm:undecidable_matroids}
The multilinear representability problem is undecidable.
This is true for any field $\F$. 
Moreover, the problem remains undecidable if the field remains unspecified, or is allowed to be taken from some given set.
\end{theorem}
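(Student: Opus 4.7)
The strategy is to reduce from the uniform word problem for finite groups, shown undecidable by Slobodskoi. Given a finite group presentation $\langle X \mid R \rangle$ and a word $w$ in the free group $F(X)$, I would construct a matroid $M = M(X, R, w)$ such that $M$ admits a $c$-arrangement representation over $\F$ for some $c \ge 1$ if and only if there exists a finite group $H$ and a homomorphism $\phi \colon F(X) \to H$ with $\phi(r) = 1$ for all $r \in R$ but $\phi(w) \ne 1$. Since deciding whether such $H$ exists is exactly the complement of the uniform word problem for finite groups, this yields undecidability of \Cref{prob:c_representability}. The statement over a fixed field $\F$ follows by arranging the construction so that $H$ can always be taken to embed in $GL_n(\F)$ for some $n$, which is possible for any $\F$ since every finite group embeds in $GL_n(\Q)$ and hence in $GL_n(\F)$ for $\ch\F = 0$, with a parallel argument in positive characteristic. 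This also gives uniformity across fields.

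The matroid $M(X, R, w)$ is a generalization of the classical rank-$3$ Dowling geometry $Q_3(G)$, which has the property that its linear representations over $\F$ correspond to embeddings $G \hookrightarrow \F^\times$. In the construction, the ground set contains three ``joint'' points forming a frame, together with one ``element point'' on a distinguished line for each generator $x \in X$. A von~Staudt-style multiplication gadget provides, for every word $u \in F(X)$, a derived point $p_u$ whose position on the line records the element $u$ in whatever group structure the representation induces. Each relation $r \in R$ is enforced by identifying $p_r$ with the identity point, and nontriviality of $w$ is enforced by a gadget that separates $p_w$ from the identity point via a suitable configuration of auxiliary lines and points.

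The crux, and the main obstacle flagged in the abstract, is the correspondence in the $c$-arrangement regime, where the ``points'' are replaced by $c$-dimensional subspaces and the rigidity enjoyed by classical linear representations is substantially weakened. The easier direction, producing a $c$-arrangement from a finite group $H \subset GL_c(\F)$ with $\phi(w) \ne 1$, proceeds by realizing each element point as the graph of the corresponding linear isomorphism between two fixed $c$-dimensional reference subspaces, so that the von~Staudt multiplication gadget recovers the group operation of $H$. The hard direction is to extract, from an arbitrary $c$-arrangement representation of $M(X, R, w)$, a genuine finite-group homomorphism $\phi$ into some subgroup of $GL_c(\F)$ that respects $R$ and keeps $w$ nontrivial. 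This requires carefully designed gadgets which rigidify the representation sufficiently that the ``elements'' attached to element points form a group under the gadget-induced multiplication, despite the freedom afforded by allowing $c$-dimensional subspaces; the $c$-arrangement submodular condition on every subset, rather than just on pairs, is essential to prevent degenerate representations. Once this rigidity lemma is proved, the reduction from Slobodskoi's theorem is immediate.
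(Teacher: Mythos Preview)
Your high-level reduction from Slobodskoi's theorem via a generalized Dowling geometry matches the paper, but you have the direction of difficulty reversed, and this is a genuine gap. You call ``finite group $H \Rightarrow$ $c$-arrangement'' the easy direction, realizing each element point as the graph of the corresponding matrix. That arrangement is precisely the $\A_{G,\phi}$ of \cref{pro:G_weak_representation}, but it is only a \emph{weak} $c$-representation, not a $c$-arrangement: if $x,y$ lie on the same side of the triangle with associated matrices $X,Y$, their subspaces span dimension $c+\rk(X-Y)$, which is a multiple of $c$ only when $X-Y$ is invertible or zero. This fails for most pairs of matrices (even in the regular representation), so your ``easy'' direction does not produce a $c$-arrangement. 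Conversely, the direction you call hard---extracting a homomorphism into $GL_c(\F)$ from a representation---is the routine step here (\cref{pro:weak_representation_G}): normalize the block matrix, read off the matrices, apply Malcev to get a finite quotient. No rigidity lemma is needed.

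The bulk of the paper (\cref{sec:alg_reg}--\cref{sec:c-bases}) is devoted to repairing the failure of the first direction. An \emph{inflation} procedure enlarges every weak representation to one whose rank function is determined purely combinatorially by $N_{S,R}$, hence is genuinely $c$-admissible. But this very independence erases whether $A_{w^{(1)}}=A_{e^{(1)}}$, so a further device (well-separated extensions via the regular representation, doubling, and a separating $c$-basis) is needed to re-encode that inequality combinatorially. The output is not a single matroid but a finite computable list $\{M_1,\dots,M_n\}$ of expansions, at least one of which is $c$-representable iff the UWPFG instance has a negative answer; this suffices for undecidability. Your proposed single matroid with a ``separation gadget'' glosses over both obstacles: without inflation there is no $c$-arrangement for the gadget to live in, and after inflation the naive gadget no longer sees the inequality.
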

In this sense, Bj\"orner was correct; the representability question is indeed ``hopeless''.

\subsection{Related Work}
Multilinear matroids found applications to network coding capacity:
In~\cite{Ahl00}, Ahlswede et al.\ introduced a model for network information flow problems.
When the coding functions are constrained to be linear, these problems are related to polymatroid representability.
In~\cite{ElR10}, El Rouayheb et al.\ constructed linear network capacity problems equivalent to multilinear matroid representability (cf.\ also~\cite{DFZ07} for a related construction).
\Cref{thm:undecidable_matroids} together with Proposition 18 in~\cite{ElR10} thus implies that the question whether an instance of the network coding problem has a  linear vector coding solution is also undecidable.
Another application is to secret sharing schemes, see~\cite{SA98}.

One can determine whether a given matroid is representable as a $1$-arrangement over an algebraically closed field via a Gr\"obner basis computation (cf.~\cite[p. 227]{Oxl11}). 
In fact, the same method can be adapted to decide $c$-arrangement representability for a fixed $c\ge 1$.
In his Ph.D.\ thesis, Mn\"ev proved a universality theorem for realization spaces of oriented matroids, cf.\ \cite{Mne88} for an exposition.
Subsequently, Sturmfels observed that $1$-arrangement representability over the rational numbers $\Q$ is equivalent to Hilbert's Tenth Problem for $\Q$, which asks whether a single multivariate polynomial equation over the integers has a solution in $\Q$~\cite{Stu87}.
It is not known whether this is decidable.

An important class of matroids are \emph{Dowling geometries} which are defined from finite groups~\cite{Dow73}.
In~\cite{BBP14}, Beimel et al.\ characterized when a Dowling geometry is representable as a $c$-arrangement in terms of fixed point free representations of its underlying group.
Our work is related to Dowling geometries and the above characterization: we generalize Dowling's construction and construct matroids which encode finitely-presented groups.
The issues we then have to deal with are directly related to the existence of fixed points in matrix representations of these groups.

Multilinear matroid representations are special cases of two more general classes of matroid representations.
One of these is the class of matroids representable over a \emph{skew partial field}, where a $c$-arrangement representation over a field $\F$ is equivalent to a skew partial field representation over the matrix ring $M_c(\F)$~\cite{PvZ13}.
The other is the class of \emph{entropic matroids}~\cite{Fuj78} and their equivalent definitions via \emph{partition representations}~\cite{Mat93} or \emph{secret sharing matroids}~\cite{BD91}.
These also contain the class of multilinear matroids.
In both cases it is conjectured that the inclusion of multilinear matroids in these classes is strict.
We will present an example of a matroid that is skew partial field representable but not multilinear in forthcoming joint work with Rudi Pendavingh~\cite{KPY20}.

\subsection{Uniform Word Problem for Finite Groups}

The proof of \Cref{thm:undecidable_matroids} relates the $c$-arrangement representability to the \emph{uniform word problem for finite groups} (UWPFG):
\begin{description}
	\item[Instance] A finite presentation $\langle S\mid R\rangle$ of a group, that is $S$ is a finite set of generators and $R$ a finite set of relations in $S$, together with a word $w$ which is a product of elements in $S$ and their inverses.
	\item[Question] Does every group homomorphism from the group defined by the presentation $\langle S\mid R\rangle $ to a \emph{finite} group $G$ map $w$ to the identity in $G$?
\end{description}

\begin{theorem}[\cite{Slo81}]\label{thm:undecidable_grp_theory}
	The uniform word problem for finite groups is undecidable.
\end{theorem}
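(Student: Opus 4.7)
The statement is Slobodskoi's theorem, which the present paper invokes as a black box; here I sketch how I would approach proving it from scratch. The plan is to reduce a known undecidable problem to UWPFG. The first observation is that UWPFG is co-recursively enumerable: if a word $w$ fails to be trivial in some finite quotient of $\langle S \mid R\rangle$, this can be certified by enumerating finite groups $G$ together with assignments of the generators $S$ to elements of $G$, and checking that the relators in $R$ all map to the identity while $w$ does not. Consequently it suffices to show UWPFG is not recursively enumerable, and the natural source for a reduction is a co-r.e.\ undecidable problem such as the non-halting problem for Turing machines.

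A naive first attempt would be to invoke the Novikov--Boone theorem directly: there exist finitely presented groups with undecidable word problem, so perhaps one could just package such a group as the instance. This fails for two reasons. First, a theorem of Mal'cev asserts that a finitely presented residually finite group has solvable word problem, so no residually finite group can serve as a witness via this route. Second, and more fundamentally, for non-residually-finite groups the UWPFG question and the ordinary word problem diverge, so the pathology must be engineered directly at the level of finite quotients rather than inherited from an ambient undecidable word problem.

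The approach I would take is therefore to associate to each Turing machine $M$ a finite presentation $\langle S_M \mid R_M\rangle$ and a word $w_M$ such that $w_M$ is trivial in every finite quotient of $\langle S_M \mid R_M\rangle$ if and only if $M$ does not halt on the empty input. Such constructions proceed via HNN extensions and amalgamated free products in the tradition of Novikov--Boone and Higman, encoding tape configurations and transition rules as generators and relations, and using Britton's lemma to analyze normal forms. The main obstacle, and the real content of Slobodskoi's theorem, is the final step of this encoding: one must ensure on the one hand that each computational step of $M$ corresponds to an equation holding in \emph{every} finite quotient, while on the other hand a halting configuration yields an explicit finite group $G$ and a homomorphism from $\langle S_M \mid R_M\rangle$ to $G$ witnessing $w_M \neq 1_G$. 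Controlling both directions simultaneously — propagating relations to all finite images while still allowing a concrete finite non-triviality witness — is the subtle passage from the ambient group to its profinite completion, and is precisely what makes the theorem nontrivial.
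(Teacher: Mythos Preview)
The paper does not prove this theorem; it is cited from \cite{Slo81} and used as a black box, exactly as you note at the outset. There is therefore no proof in the paper to compare your proposal against.

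As for the proposal itself: what you have written is an outline of a strategy, not a proof. The preliminary observations are correct --- UWPFG is co-recursively enumerable, and the Mal'cev obstruction rules out simply importing a Novikov--Boone group. But you then explicitly acknowledge that ``the main obstacle, and the real content of Slobodskoi's theorem'' is the construction of a presentation $\langle S_M \mid R_M\rangle$ and word $w_M$ with the required two-sided behaviour in finite quotients, and you do not carry this out. The passage about HNN extensions, Britton's lemma, and ``controlling both directions simultaneously'' names the difficulty without resolving it. So the proposal is not wrong as a roadmap, but it is not a proof: the entire substance of the theorem lies precisely in the step you defer.
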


\subsection{Structure of the Paper} This paper is organized as follows.
In \Cref{sec:outline} we give a high-level outline of the proof.
\Cref{sec:pre} gives definitions and basic properties which are used throughout the article.
Generalized Dowling geometries and their relation to the uniform word problem are explained in~\Cref{sec:staudt}.
In \Cref{sec:alg_reg} to \Cref{sec:c-bases} we develop several technical tools as explained in the outline.
We put these tools together to prove \Cref{thm:undecidable_matroids} in \Cref{sec:main_proof}.

\section*{Acknowledgements}
We would like to thank Karim Adiprasito for his mentorship and for introducing us to the topic of $c$-arrangements.
Furthermore, we are grateful to Rudi Pendavingh for helpful conversations on von Staudt constructions. Our application of them is inspired by joint work with him in~\cite{KPY20}.

An extended abstract of this paper will appear as ``Undecidability of $c$-Arrangement Matroid
Representations" in a proceedings volume of Séminaire Lotharingien de Combinatoire, among extended abstracts from the 2020 FPSAC conference.

\section{Outline of the Proof}\label{sec:outline}

The first idea in the proof is a generalized Dowling geometry, which is a matroid that encodes a set of multiplicative relations between matrices of unspecified size (had we been in the setting of linear representations of matroids, we would have had scalars from a field instead). This is essentially a von Staudt construction which encodes only multiplicative relations (the Dowling geometries, originally introduced in~\cite{Dow73}, are a special case).

\begin{figure}[bh!]
	\centering
	\includegraphics[width=0.5\linewidth]{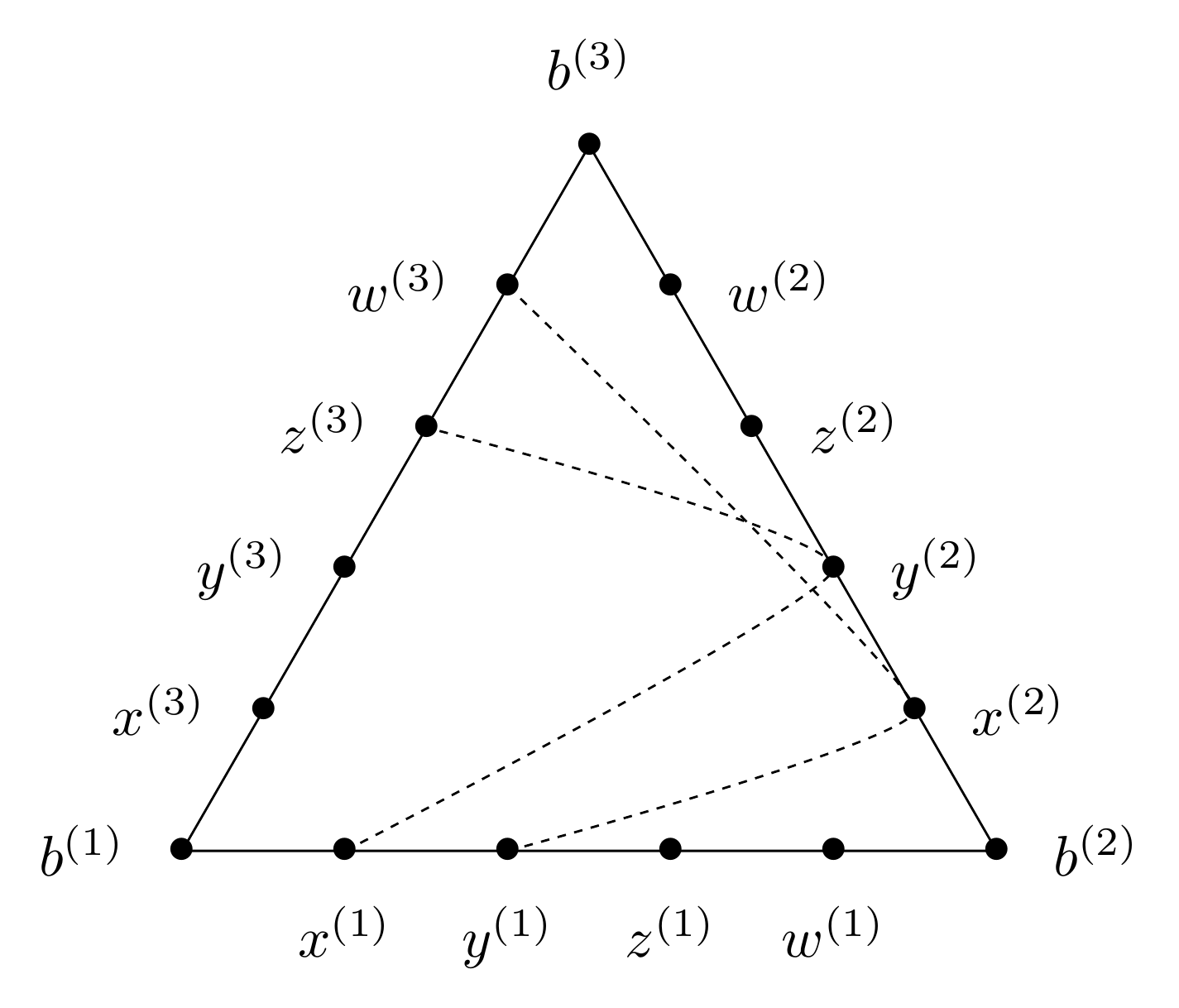}
	\caption{A geometric depiction of a triangle matroid. Each side contains copies of the generators $x,y,y,z$. The triples $\{x^{(1)},y^{(2)},z^{(3)}\}$ and $\{y^{(1)},x^{(2)},w^{(3)}\}$ form two circuits of the matroid.}
	\label{fig:triangle_matroid}
\end{figure}

The construction of generalized Dowling geometries is carried out in \cref{sec:staudt}. It takes as input an instance $\langle S\mid R\rangle, w$ of the UWPFG (uniform word problem for finite groups) and outputs a matroid $N_{S,R}$.

The matroid $N_{S,R}$ is a triangle matroid, or frame matroid of rank three, which means it is is a matroid of rank three with a distinguished basis such that all other elements are on the lines spanned by this basis.
\Cref{fig:triangle_matroid} shows a geometric depiction of a triangle matroid.
In our construction, the points on each side of the triangle correspond with the elements of the generating set $S$ and their inverses. They are marked by an upper index to distinguish between different sides (see the figure above).

A $c$-arrangement representation of the triangle matroid corresponds to a block matrix with blocks of size $c\times c$, having three block rows and a block column for each element of the matroid. 
This block matrix can be brought into a normal form such that an element $a^{(i)}$ for $1\le i\le 3$ has the blocks $-I_c, A_i,0$ in the rows $i,i+1,i+2$ respectively (regarding the indices cyclically modulo three) where $I_c$ is the identity matrix and $A_i\in GL_c(\F)$.
Furthermore, we add circuits to the matroid $N_{S,R}$  to guarantee that the matrices corresponding to $x^{(1)},x^{(2)},x^{(3)}$ are all equal to the same matrix $X\in GL_c(\F)$. 
\Cref{lem:block_matrices} then shows that the elements $x^{(1)},y^{(2)},z^{(3)}$ form a circuit in the matroid if and only if their corresponding matrices satisfy $YX=Z^{-1}$.

Hence, the matrices in $c$-arrangement representations of the matroid depicted in \Cref{fig:triangle_matroid} satisfy $YX=Z^{-1}$ and $XY=W^{-1}$.
By construction, $z^{(3)}$ and $w^{(3)}$ are different elements in the matroid which means that the matroid is only representable if there are $X,Y\in GL_c(\F)$ such that $XY\neq YX$.
This implies that the matroid can only be representable by $c$-arrangements for $c\ge 2$.
This reflects the result of Goresky and MacPherson that the non-Pappus matroid is representable as $c$-arrangement for $c\ge 2$~\cite[p. 257]{GM88}.

 In later sections, we manipulate this matroid and exhibit a family of matroids such that at least one of them is representable as a $c$-arrangement if and only if the given UWPFG instance has a negative answer.

Any group representation of $\langle S\mid R\rangle$ in $GL_c(\F)$ gives rise to a subspace arrangement where each subspace is of dimension $c$. Such arrangements often do not represent the matroid $N_{S,R}$ as $c$-arrangements, and we call them \emph{weak $c$-representations}.
The issue already arises in the matroid depicted in \Cref{fig:triangle_matroid}:
the sum of the subspaces corresponding to the elements $x^{(1)}$ and $y^{(1)}$ is of dimension $2c$ if and only if $X-Y$ is invertible.
This is not the case for all pairs of non-commuting matrices.

\begin{figure}[b]
	\begin{subfigure}[b]{0.4\linewidth}
		\centering
		\includegraphics[scale=.6]{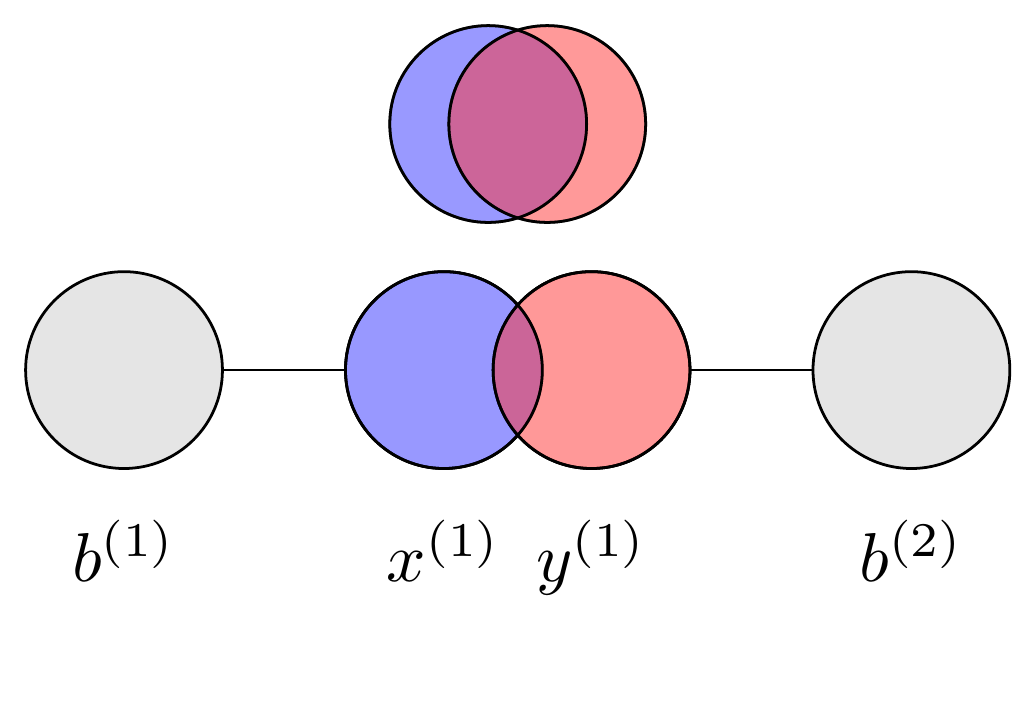}
		\caption{Inflation of two elements.}
		\label{fig:regularization_pair}
	\end{subfigure}
	\begin{subfigure}[b]{0.6\linewidth}
		\centering
		\includegraphics[scale=.6]{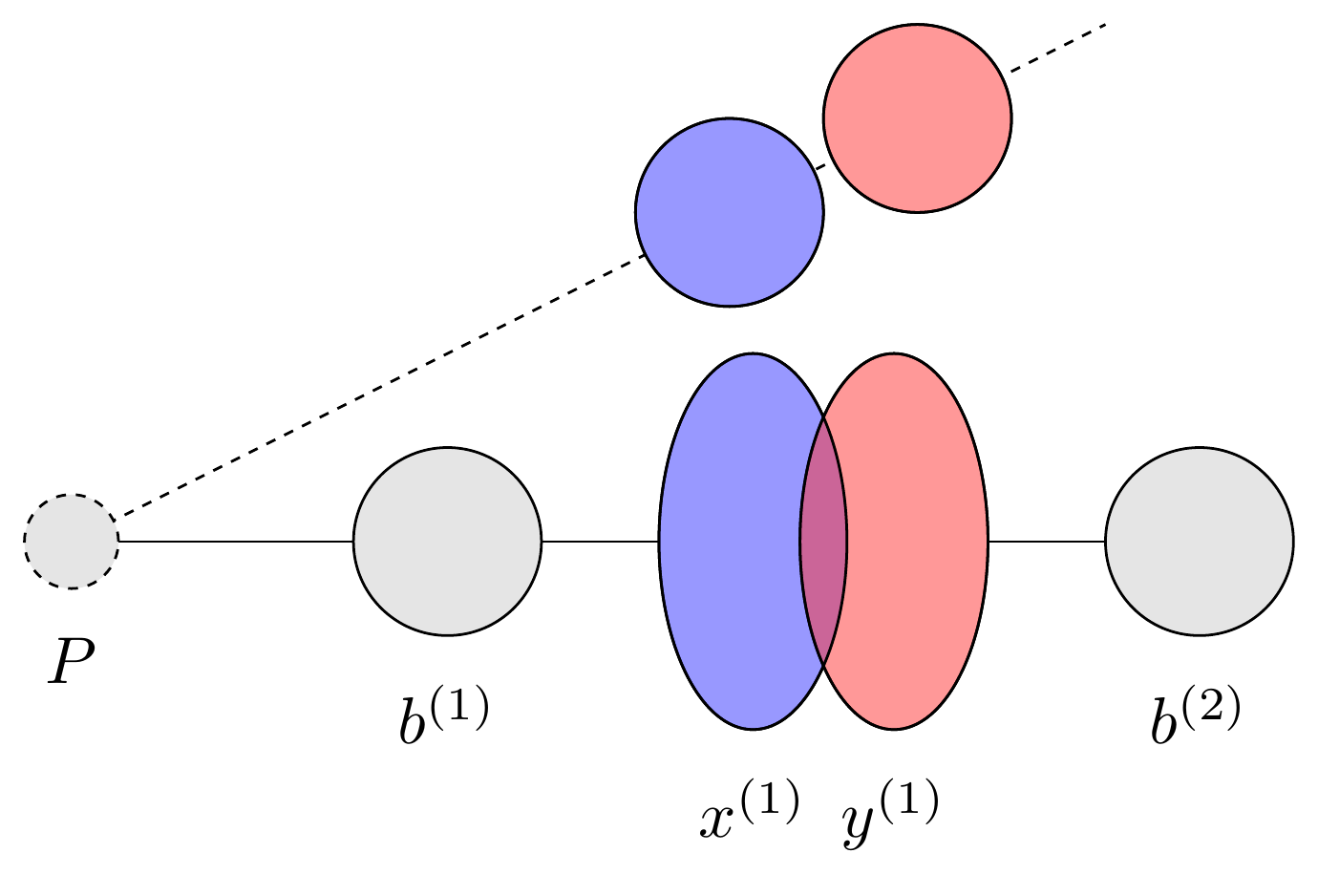}
		\caption{Infaltion with respect to the basis.}
		\label{fig:regularization_basis}
	\end{subfigure}
	\caption{The two inflation steps as explained in the last paragraph.
		The colored shapes above the line through $b^{(1)}$ and $b^{(2)}$ are the added subspaces in both figures.}
	\label{fig:regularization}
\end{figure}

To overcome this hurdle we develop an algebraic construction, \emph{inflation}, in \Cref{sec:alg_reg}.
As an example consider the subspaces of $x^{(1)}$ and $y^{(1)}$ which have a non-trivial intersection as depicted in \Cref{fig:regularization_pair}.
We add new subspaces to each of these subspaces such that their intersection is of dimension $c$.
To ensure that the sum of the subspaces corresponding to $x^{(1)}$ and $y^{(1)}$ intersects the subspace corresponding to the sum of $b^{(1)}$ and $b^{(2)}$ in a subspace of dimension a multiple of $c$, we add two more subspaces to $x^{(1)}$ and $y^{(1)}$ that lie in on a line which intersects the line spanned by $b^{(1)}$ and $b^{(2)}$ non-trivially.
This second step is depicted in \Cref{fig:regularization_basis}. The arrangement resulting from this procedure depends on the original arrangement, but its rank function does not.
Furthermore, the resulting full arrangement contains enough information to reconstruct the original one.

\Cref{sec:comb_reg} carries out an analogous combinatorial construction for polymatroids.

In \Cref{sec:comp} we show that applying sufficiently many inflation steps, one can obtain a $c$-arrangement. Along the way, we show the combinatorial and algebraic inflation procedures are compatible. This compatibility is central to the rest of the proof.

In \Cref{sec:c-bases}, we perform the final technical construction necessary to relate these inflated polymatroids to the UWPFG: we show that, from a inflation of $N_{S,R}$, one can produce a finite set of matroids such that at least one is representable as a $c$-arrangement if and only if the UWPFG has a negative answer. The key to relating inflations of $N_{S,R}$ to the UWPFG is being able to detect combinatorially whether a certain pair of subspaces is equal, and this is the main goal of the section.
We conclude by proving the main theorem in \Cref{sec:main_proof}.

\section{Preliminaries}\label{sec:pre}

In this section we collect definitions and basic properties which will be used throughout the article.
We also need some elementary matroid theory, but we will not cover it here.
We recommend the beginning of Oxley's book~\cite{Oxl11}.

\subsection{Subspace Arrangements}
\begin{defn}\label{def:arr}
	Let $V$ be a vector space over a field $\F$ and let $E$ be a finite set.
	A \emph{subspace arrangement} $\A$ is a set of subspaces $\{A_e\}_{e\in E}$ where $A_e$ is a subspace of $V$ for each $e\in E$.
	For a subspace arrangement $\A$ we will  use the notation $A_X\coloneqq \sum_{e\in X}A_e$ for any subset $X\subseteq E$. Further,
	\begin{enumerate}
		\item We call $\A$ $c$-\emph{homogeneous} for some $c\ge 1$ if $\dim A_e= c$ for all $e\in E$.
		\item We call $\A$ $c$-\emph{admissible} for some $c\ge 1$ if for any subset $X\subseteq \A$ the dimension of $A_X$ is a multiple of $c$.
		\item A $c$-arrangement is a subspace arrangement which is both $c$-homogeneous and $c$-admissible.
	\end{enumerate}
\end{defn}

The notion of a $c$-admissible subspace arrangement is not standard, but will be useful in \Cref{sec:c-bases}.

\begin{rmrk}
 Note that $c$-arrangements are often equivalently defined as subspace arrangements such that all \emph{intersections} are of \emph{codimension} a multiple of $c$.
 We prefer to work in the dual framework, with subspaces of dimension $c$ and sums instead of intersections. This duality is the same as that between matroid representations and hyperplane arrangements: it takes a subspace of a vector space $V$ to its annihilator in the dual $V^\ast$. Taking a sum of subspaces is dual to taking their intersection.
\end{rmrk}

\begin{defn}\label{def:rank}
	To a subspace arrangement $\A=\{A_e\}_{e\in E}$ over the field $\F$ we define two \emph{rank functions} on the power set $\mathcal{P}(E) $. Fix a subset $X\subseteq E$.
	\begin{enumerate}
		\item The usual rank function $r_{\A} : \mathcal{P}(E) \rightarrow \N$ is defined by  $r_{\A}(X)\coloneqq\dim (A_X)$.
		\item For any $c\ge 1$ we define the \emph{normalized} rank function $r_{\A}^c : \mathcal{P}(E) \rightarrow \Q$  by setting $r_{\A}^c(X)\coloneqq \frac{1}{c}\dim (A_X)$.
	\end{enumerate}
\end{defn}

\begin{rmrk}
	A $c$-homogeneous subspace arrangement $\A$ is a $c$-arrangement if and only if its normalized rank function $r_{\A}^c$ takes only integral values.
\end{rmrk}

Now we can define when a matroid is representable by a $c$-arrangement.
In our proofs we will additionally need a weaker notion of representations, due to the issues arising from subspaces having non-trivial intersections as discussed in \Cref{sec:outline}.

\begin{defn}\label{def:weak}
	Fix a matroid $M$ on the ground set $E$ with rank function $r$.
	\begin{enumerate}
		\item A matroid $M=(E,r)$ is called \emph{multilinear} of order $c$ over a field $\F$ if there exists a $c$-arrangement $\A=\{A_e\}_{e\in E}$ such that their (normalized) rank functions agree, i.e. $r(X)=r_{\A}^c(X)$ for any $X\subseteq E$.
		We say that the $c$-arrangement $\A$ \emph{represents} the matroid $M$ in that case.
		\item To define a weaker representability notion, we fix a basis $B$ of the matroid $M$.
		 We say that a $c$-homogeneous subspace arrangement $\A=\{A_e\}_{e\in E}$ \emph{weakly represents} $M$ with respect to the basis $B$ if 
		$r(X)\ge r_{\A}^c(X)$ for all subsets $X\subseteq E$ and $r(Y)= r_{\A}^c(Y)$ for all subsets $Y\subseteq E$ with $|Y\setminus B|\le 1$.
		In this case, we also say $\A$ is a weak $c$-representation of $M$.
	\end{enumerate}
\end{defn}

Our proofs often use the following dimension formula without explicitly mentioning it:
If $U_1,U_2$ are finite dimensional subspaces of a vector space $V$ then
\[
\dim(U_1+U_2) = \dim (U_1)+\dim(U_2) - \dim(U_1\cap U_2).
\]

\subsection{Linear-algebraic Calculations}
The following three lemmas are collected here to avoid cluttering later sections. The first is a basic tool in \cref{sec:staudt}. The other two will be used to bound the intersections of certain subspaces in \cref{sec:well_sep}.

\begin{lemm}\label{lem:block_matrices}
	Let $\F$ be a field and $A,B,C\in M_k(\F)$ any $k\times k$ matrices. 
	\begin{enumerate}
		\item\label{it:AB} The block matrix $
		\begin{bsmallmatrix}
		-I_k & -I_k\\
		A& B\\
		0 & 0
		\end{bsmallmatrix}$
		has rank $k+\rk(B-A)$.
		\item\label{it:ABC} The block matrix $
		\begin{bsmallmatrix}
		-I_k & 0    &  C\\
		A    & -I_k &  0\\
		0    & B     & -I_k 
		\end{bsmallmatrix}$
		has rank $2k+\rk(BAC-I_k)$.			
	\end{enumerate}
\end{lemm}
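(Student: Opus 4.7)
Both statements are straightforward block Gaussian elimination computations: multiplying by invertible block matrices on the left or right preserves rank, so the plan is to use the $-I_k$ blocks as pivots and reduce to a block matrix whose rank is obvious.

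For part~\eqref{it:AB}, the plan is to first perform a column operation that subtracts the first block column from the second (multiplication on the right by an upper-triangular unipotent block matrix), which clears the $(1,2)$ entry to $0$ and replaces $B$ with $B-A$. Then a block row operation that adds $A$ times the first block row to the second kills the $A$ block. The result is block diagonal with blocks $-I_k$, $B-A$, $0$, so its rank equals $k + \rk(B-A)$.

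For part~\eqref{it:ABC}, I would use the two diagonal $-I_k$ blocks as pivots in succession. Adding $A$ times the first block row to the second zeros out the $A$ entry and produces a new $AC$ entry in position $(2,3)$; then adding $B$ times the (updated) second block row to the third zeros out $B$ and updates the $(3,3)$ entry from $-I_k$ to $-I_k + B(AC) = BAC - I_k$. The matrix is now block upper triangular with diagonal blocks $-I_k, -I_k, BAC - I_k$, so its rank is $2k + \rk(BAC - I_k)$.

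There is no real obstacle here; the only thing to watch is bookkeeping, namely making sure each block operation corresponds to left/right multiplication by an invertible (unipotent) block elementary matrix so that rank is preserved, and tracking the updated entries through the two successive pivoting steps in part~\eqref{it:ABC}.
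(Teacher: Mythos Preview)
Your proposal is correct and follows essentially the same approach as the paper: both use the $-I_k$ blocks as pivots in block Gaussian elimination, the only cosmetic difference being that the paper performs column operations (right multiplication by a unipotent block matrix) where you perform row operations (left multiplication), arriving at a block lower-triangular rather than upper-triangular form. Your extra row operation in part~\eqref{it:AB} is unnecessary but harmless.
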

\begin{proof}
	In the case of \eqref{it:AB}, we multiply the matrix from the right with the invertible block matrix $\begin{bsmallmatrix}
	I_k & -I_k  \\
	0  & I_k \\
	\end{bsmallmatrix}$ which preserves its rank. 
	Thus, we obtain the block matrix $\begin{bsmallmatrix}
	-I_k & 0\\
	A    & B-A \\
	0    & 0
	\end{bsmallmatrix}$ which immediately implies the claim on its rank.
	
	Analogously for the case \eqref{it:ABC}, we multiply the matrix from the right with the invertible block matrix $\begin{bsmallmatrix}
	I_k & 0  &  C\\
	0  & I_k &  AC\\
	0  & 0  & I_k 
	\end{bsmallmatrix}$ which preserves its rank.
	Hence, we obtain the block matrix $\begin{bsmallmatrix}
	-I_k & 0    &  0\\
	A    & -I_k &  0\\
	0    & B     & BAC-I_k 
	\end{bsmallmatrix}$ which finishes the proof.
\end{proof}

\begin{lemm}\label{lem:rank_permuatation_matrix}
	Let $\sigma\in S_k$ be a permutation with no fixed points, i.e.\ a derangement, and $P_{\sigma}$ the corresponding $k\times k$ permutation matrix over some field $\F$.
	Then $\rk(P_{\sigma}-I_k)\ge \frac{k}{2}$.
\end{lemm}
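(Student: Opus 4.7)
The plan is to reduce the statement to a cycle-by-cycle computation via the disjoint cycle decomposition of $\sigma$. After a simultaneous permutation of rows and columns (which does not change the rank), the matrix $P_\sigma - I_k$ becomes block diagonal, with one block $P_{\tau_i} - I_{\ell_i}$ for each cycle $\tau_i$ of $\sigma$, where $\ell_i$ is the length of $\tau_i$. Since $\sigma$ is a derangement, every cycle satisfies $\ell_i \ge 2$, and writing $m$ for the number of cycles we have $\sum_i \ell_i = k$ and thus $m \le k/2$.

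Next I would compute the rank of a single cyclic block. For an $\ell$-cycle $\tau$, a vector $v \in \F^\ell$ lies in $\ker(P_\tau - I_\ell)$ if and only if $v_{\tau(i)} = v_i$ for every $i$. Because $\tau$ is a single cycle, this forces all coordinates of $v$ to coincide, so $\ker(P_\tau - I_\ell)$ is the one-dimensional subspace spanned by the all-ones vector. In particular $\rk(P_\tau - I_\ell) = \ell - 1$, and this computation is valid over an arbitrary field $\F$ since it uses only the combinatorial description of the kernel.

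Putting the blocks together,
\[
\rk(P_\sigma - I_k) \;=\; \sum_{i=1}^m (\ell_i - 1) \;=\; k - m \;\ge\; k - \frac{k}{2} \;=\; \frac{k}{2},
\]
which is the desired bound. There is no real obstacle here: the only point worth checking carefully is that the characterization of $\ker(P_\tau - I_\ell)$ is genuinely field-independent, so that the bound holds in every characteristic.
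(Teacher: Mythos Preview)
Your proof is correct and follows essentially the same outline as the paper's: decompose $\sigma$ into $m$ disjoint cycles, show that $\rk(P_\sigma - I_k) = k - m$, and use the derangement hypothesis to bound $m \le k/2$.

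The only difference is in how the rank formula $k-m$ is obtained. You block-diagonalize and compute the kernel of each cyclic block directly via rank--nullity. The paper instead observes that $P_\sigma - I_k$ is a representation matrix for the graphic matroid $M(G_\sigma)$ of the functional graph of $\sigma$, which is a disjoint union of $m$ cycles, and reads off the matroid rank $k-m$. Your argument is more self-contained (no appeal to graphic matroids), while the paper's phrasing ties the computation to the matroid-theoretic setting of the article; both reach the same conclusion with the same arithmetic.
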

\begin{proof}
	Consider the graph $G_{\sigma}$ with vertices $\{1,\dots,k\}$ having an an edge $\{i,j\}$ if $\sigma(i)=j$.
	The matrix $P_{\sigma}-I_k$ is a representation matrix of the graphic matroid $M(G_{\sigma})$  over $\F$ (cf.~\cite[Lemma 5.1.3]{Oxl11}).
	
	Suppose $\sigma$ has a decomposition into $r$ disjoint cycles.
	This implies the graph $G_{\sigma}$ consists of $r$ disjoint cycles, so $M(G_{\sigma})$ is a direct sum of $r$ circuits.
	Therefore, we have
	\[
	\rk(P_{\sigma}-I_k)=\rk (M(G_{\sigma}))= k-r.
	\]
	By assumption $\sigma$ is a derangement which means that each cycle has length at least two.
	The result now follows from the fact that $\sigma$ has at most $\frac{k}{2}$ cycles.
\end{proof}

\begin{coro}\label{lem:rank_reg_rep}
	Let $G$ be a finite group and let $\{P_g\}_{g\in G}$ be the permutation matrices of its regular representation (these are the permutation matrices of $G$'s action on itself by left multiplication). Then for any distinct $g_1,g_2\in G$: \[\rk(P_{g_1}-P_{g_2}) \ge \frac{|G|}{2}.\]
\end{coro}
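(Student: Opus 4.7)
The plan is to reduce the statement to the previous lemma by using the multiplicative structure of the regular representation. First I would factor the difference as
\[
P_{g_1} - P_{g_2} = P_{g_2}\bigl(P_{g_2^{-1}g_1} - I_{|G|}\bigr),
\]
using that $g \mapsto P_g$ is a group homomorphism into $GL_{|G|}(\F)$, so $P_{g_2}^{-1}P_{g_1} = P_{g_2^{-1}g_1}$. Since $P_{g_2}$ is invertible, left multiplication by it preserves rank, so it suffices to show $\rk(P_{g_2^{-1}g_1} - I_{|G|}) \ge |G|/2$.

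Next I would verify that $\sigma \coloneqq g_2^{-1}g_1$, viewed as the permutation $h \mapsto \sigma h$ on the underlying set $G$, is a derangement. This is a standard fact: since $g_1 \ne g_2$, we have $\sigma \ne e$, and $\sigma h = h$ would force $\sigma = e$. Therefore the permutation matrix $P_{\sigma}$ is of the form covered by \Cref{lem:rank_permuatation_matrix}, and applying that lemma with $k = |G|$ yields the desired bound.

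I do not anticipate any genuine obstacle here: the corollary is essentially a one-line translation of \Cref{lem:rank_permuatation_matrix} via the group law. The only step requiring any care is to check that the regular representation is indeed a homomorphism sending $g$ to the permutation matrix of left multiplication by $g$, so that the factorization above holds with the correct indices; after that, the fixed-point-free property of nontrivial left translations makes the derangement hypothesis immediate.
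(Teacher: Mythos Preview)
Your proposal is correct and essentially identical to the paper's proof: both multiply by the invertible matrix $P_{g_2}^{\pm 1}$ to reduce to $P_\sigma - I_{|G|}$ for a nontrivial $\sigma$, observe that left translation by a nontrivial element is a derangement, and then invoke \Cref{lem:rank_permuatation_matrix}. The only cosmetic difference is that you factor out $P_{g_2}$ on the left (yielding $\sigma = g_2^{-1}g_1$) whereas the paper multiplies by $P_{g_2}^{-1}$ on the right (yielding $\sigma = g_1 g_2^{-1}$).
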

\begin{proof}
	Note that $P_g$ is the permutation matrix of a derangement for any $g\in G$ other than $e$: otherwise, in the action of $G$ on itself by left multiplication, $g$ has a fixed point, say $h$, and $gh=h$; but this implies $g=e$.
	
	For distinct $g_1,g_2 \in G$ \[\rk(P_{g_1}-P_{g_2}) = \rk((P_{g_1}-P_{g_2})P_{g_2}^{-1}),\] since $P_{g_2}^{-1}$ is invertible.
	Thus \cref{lem:rank_permuatation_matrix} implies 
	\[
	\rk(P_{g_1}-P_{g_2}) = \rk(P_{g_1 g_2^{-1}} - I_{|G|}) \ge \frac{|G|}{2.} \qedhere
	\]
\end{proof}

\subsection{Group Presentations}

We collect basic definitions for group presentations and refer to the book by Lyndon and Schupp for details~\cite{LS77}. 

\begin{defn}
	Let $\langle S\mid R \rangle$ be a finite presentation, that is $S$ is a finite list of symbols and $R$ is a finite set of words in $S$ and their inverses.
	Let $F_S$ be the associated free group generated by $S$, and let $N$ be the normal closure of the subgroup generated by $R$ in $F_S$.
	We define the group $G_{S,R}\coloneqq F_S / N$.
	Two presentations $\langle S\mid R \rangle$ and $\langle S'\mid R' \rangle$ are \emph{equivalent}  if the groups $G_{S,R}$ and $G_{S',R'}$ are isomorphic.
\end{defn}

To simplify the constructions below we will restrict ourselves to presentations with relations of length three. Using Tietze transformations, one can reduce to this situation from the general case. We state this in a lemma.

\begin{lemm}\label{lem:three_relations}
	Any finite presentation of a group $\langle S\mid R \rangle$ is equivalent to a finite presentation $\langle S'\mid R' \rangle$ where $R'$ consists of relations of length three only.
	Moreover, any instance of a UWPFG $\langle S\mid R \rangle,w$ can be equivalently reformulated such that any word in $R$ is of length three and $w\in S$.
\end{lemm}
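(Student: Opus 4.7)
The plan is to produce the equivalent presentation by a short sequence of Tietze transformations, each of which preserves the isomorphism class of the group defined by the presentation. The argument proceeds in three phases.

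First, I would shorten any long relation. Given $r = s_{i_1} s_{i_2} \cdots s_{i_\ell} \in R$ with $\ell \ge 4$ (each $s_{i_j}$ is either a generator or the inverse of one), introduce a fresh generator $t$ together with the length-three relation $t^{-1} s_{i_1} s_{i_2}$. This is a Tietze move of the first kind: it adds $t$ together with its defining equation $t = s_{i_1} s_{i_2}$, so the abstract group is unchanged. I then replace $r$ by $t\, s_{i_3} \cdots s_{i_\ell}$, a relation of length $\ell-1$ which is Tietze-equivalent to $r$ modulo the defining equation. Iterating this reduction brings every relation down to length at most three while keeping the group isomorphism class fixed.

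Second, I would pad the relations whose length is less than three. Add a fresh generator $\varepsilon$ together with the length-three relation $\varepsilon\,\varepsilon\,\varepsilon^{-1}$; as a word in the free group this freely reduces to $\varepsilon$, so the relation is equivalent to $\varepsilon = e$ and again does not alter the group. Once $\varepsilon = e$ is available, I replace every length-two relation $s_{i_1} s_{i_2}$ by $\varepsilon\, s_{i_1} s_{i_2}$ and every length-one relation $s_{i_1}$ by $\varepsilon\,\varepsilon\, s_{i_1}$; trivial length-zero relations are simply discarded. The resulting presentation consists only of length-three relations and defines the same group $G_{S,R}$, which proves the first statement.

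Finally, for the UWPFG reformulation I would add a fresh generator $\tilde w$ together with the defining relation $\tilde w^{-1} w$ and take the query word of the new instance to be $\tilde w$, which now lies in the enlarged generating set $S'$. This is again a Tietze move of the first kind, so the group is unchanged; and since $\tilde w = w$ in this group, a homomorphism into a finite group $G$ sends $w$ to the identity if and only if it sends $\tilde w$ there, so the two UWPFG instances have the same answer. Applying the first two phases to the enlarged relation set splits the long defining relation $\tilde w^{-1} w$ into length-three pieces, completing the reduction. The entire argument is a routine sequence of Tietze moves; the only point requiring care is to check that $\varepsilon\,\varepsilon\,\varepsilon^{-1} = e$ truly encodes $\varepsilon = e$ without introducing an additional constraint, which is immediate from free reduction.
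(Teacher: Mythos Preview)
Your argument is correct. The paper does not actually give a proof of this lemma: it only remarks, just before stating it, that ``using Tietze transformations, one can reduce to this situation from the general case,'' and then states the lemma without proof. Your three-phase reduction via Tietze moves is precisely the standard argument being alluded to, so there is nothing to compare --- you have simply supplied the details the paper omits.
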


\subsection{Characteristic of the Field}

Rado proved that a matroid which is representable as a $1$-arrangement over a field $\F$, that is linearly representable in the usual sense, is representable over a finite algebraic extension over the prime field of $\F$~\cite{Rad57}.
The proof uses Hilbert's Nullstellensatz and directly translates to the situation of $c$-arrangement representations.
\begin{prop}\label{prop:field}
	Let $\A$ be $c$-arrangement over a field of characteristic $p\in\mathbb{P}\cup \{0\}$ representing a matroid $M$.
	Then, $M$ has a $c'$-arrangement $\A'$ representation over the prime field of characteristic $p$.
\end{prop}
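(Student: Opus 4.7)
The plan is to mimic Rado's argument for ordinary representability, with an extra scalar-restriction step at the end to convert an algebraic extension representation into one over the prime field.

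First I would encode the existence of a $c$-arrangement representing $M$ as the solvability of a finite system of polynomial equations and inequations in finitely many variables over the prime field. Concretely, fix $N = r(E) \cdot c$ (or take $N$ to be the dimension of the ambient space of $\A$) and represent each $c$-dimensional subspace $A_e$ by a $c \times N$ matrix of indeterminates $X_e$ whose rows span $A_e$. The rank function of the arrangement is then encoded by the block matrices obtained by stacking the $X_e$ for $e$ in a subset $X \subseteq E$: $r_\A^c(X) = r(X)$ is equivalent to the vanishing of all $(c \cdot r(X)+1)$-minors of the stacked matrix together with the non-vanishing of at least one $(c \cdot r(X))$-minor. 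Over all subsets $X \subseteq E$ this gives a finite Boolean combination of polynomial equations and inequations with integer coefficients; call this system $\Sigma_M^c$.

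The hypothesis gives a solution of $\Sigma_M^c$ over $\F$, a field of characteristic $p$. By the Lefschetz principle (equivalently, by Hilbert's Nullstellensatz applied to the ideal generated by the equations together with auxiliary variables inverting the required non-vanishing minors), the system $\Sigma_M^c$ then has a solution over the algebraic closure of the prime field $\F_p$ (or $\Q$ if $p = 0$). Since only finitely many coefficients are involved in the solution, one obtains a $c$-arrangement $\A'$ representing $M$ over some finite algebraic extension $\F' / \F_p$ of degree, say, $d$.

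Next I would use restriction of scalars to descend from $\F'$ to the prime field. Viewing $\F'$ as a $d$-dimensional $\F_p$-vector space gives an $\F_p$-linear identification $(\F')^N \cong \F_p^{Nd}$; under this identification each $c$-dimensional $\F'$-subspace $A_e'$ becomes a $cd$-dimensional $\F_p$-subspace $A_e''$, and for every $X \subseteq E$ we have $A_X'' \cong A_X'$ as abelian groups, so $\dim_{\F_p} A_X'' = d \cdot \dim_{\F'} A_X' = d \cdot c \cdot r(X)$. Thus $\A'' = \{A_e''\}_{e \in E}$ is a $(cd)$-arrangement over $\F_p$ and $r_{\A''}^{cd} = r$, so $M$ is represented as a $c'$-arrangement over the prime field with $c' = cd$.

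The main obstacle is really only bookkeeping: making sure the encoding $\Sigma_M^c$ is a genuine first-order statement over the prime field (so that the Nullstellensatz/Lefschetz step applies verbatim) and verifying that scalar restriction preserves all sum-dimensions with the uniform scaling factor $d$. Both are routine once set up carefully, so the proof reduces to a direct adaptation of Rado's original argument.
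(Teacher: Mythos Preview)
Your proposal is correct and follows essentially the same approach as the paper: reduce to a finite extension of the prime field via a Nullstellensatz argument \`a la Rado, then apply restriction of scalars to obtain a $(cd)$-arrangement over the prime field. The paper is simply terser, leaving the first step implicit by saying that Rado's proof ``directly translates'' to the $c$-arrangement setting, whereas you spell out the encoding $\Sigma_M^c$ explicitly.
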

\begin{proof}
	The discussion above shows that we can assume that $\A$ is a $c$-arrangement in a vector space $V$ over a field $\mathbb{L}$ which is a finite extension of degree $d$ over its prime field $\F$.
	Since $V$ is also a vector space over $\mathbb{F}$ and $\dim_{\mathbb{F}}(U)=d\dim_{\mathbb{L}} (U)$ for any finite-dimensional subspace of $V$, the arrangement $\A$ is naturally a $(c\cdot d)$-arrangement over the prime field $\mathbb{F}$.
	Their normalized rank functions agree.
\end{proof}
Using this proposition  we may assume to work over an arbitrary infinite field.
All following proofs are not dependent on the characteristic of the field.
We mention that the proofs also work for a subspace representation over a specified set of characteristics.

\subsection{Genericity}
We will often choose \emph{generic} subspaces of a given vector space. Essentially, a generic subspace is one which does not satisfy some given collection of Zariski-closed conditions. 

Consider for example the following situation: we are given a vector space $V$ and a subspace $W$ of $V$. If $U$ is of dimension complementary to $W$ in $V$, we expect $W+U=V$; this is a generic condition on $U$, in the sense that it is satisfied on a dense Zariski-open subset of the subspaces of appropriate dimension.

The example is almost as general as we need here. When we say $U$ is chosen generically from some family $\mathcal{F}$ of subspaces of $V$, we mean the following: for each $W$ in some family $\mathcal{G}$ of subspaces of $V$, \[\dim(U+W)=\max\{\dim(U'+W)\mid U'\in\mathcal{F}\}.\]

In this paper, the family $\mathcal{F}$ is always the family of $d$-dimensional subspaces of some fixed subspace of $V$. The family $\mathcal{G}$ is not specified explicitly, but there is always a suitable finite $\mathcal{G}$. Under these conditions, there is always a $U\in\mathcal{F}$ fulfilling the genericity condition as long as the field is infinite.

\section{Generalized Dowling Geometries}\label{sec:staudt}

In this section we reduce the uniform word problem for finite groups (UWPFG) to a problem on weak $c$-arrangements: given an instance of a UWPFG, we construct a matroid~$M$ such that the given instance has a negative answer if and only if for some $c$, $M$ has a weak $c$-representation which satisfies an additional condition.

To do this we encode group presentations in matroids which we call generalized Dowling geometries. These matroids all have a special form which simplifies many calculations, and which we describe first.

\begin{defn}\label{def:triangle_matroid}
	We call a matroid $M=(E,r)$ a \emph{triangle matroid} if it is of rank three and there exists a basis $B=\{b^{(1)},b^{(2)},b^{(3)}\}$ such that all elements of $E$ are contained in the flats spanned by $\{b^{(1)},b^{(2)}\}$, $\{b^{(1)},b^{(3)}\}$ or $\{b^{(2)},b^{(3)}\}$.
	To ease our notation we call
	\begin{enumerate}
		\item the elements in $B$ the \emph{vertices} of the triangle,
		\item the flats $\{b^{(1)},b^{(2)}\}$, $\{b^{(1)},b^{(3)}\}$ or $\{b^{(2)},b^{(3)}\}$ the \emph{sides} of the triangle, and
		\item the elements of $E$ \emph{bottom, left, and right} elements if they lie on the flats $\{b^{(1)},b^{(2)}\}$, $\{b^{(1)},b^{(3)}\}$, and $\{b^{(2)},b^{(3)}\}$ respectively.
	\end{enumerate}
	Additionally, for any subset $S\subseteq E$, we denote by $C_M(S)$ a subset in $B$ such that $r(S)=r(S\cup C_M(S))=r(C_M(S))$.
	In other words, $C_M(S)$ is a subset of $B$ such that the closures of $S$ and of $C_M(S)$ are equal.
	If it exists $C_M(S)$ is unique. Otherwise, we set $C_M(S)\coloneqq \emptyset$.
\end{defn}
\begin{rmrk}
	A triangle matroid is the same as a \emph{frame matroid} of rank three as introduced by Zaslavsky~\cite{Zas94}.
	We will not use any non-trivial property of frame matroids.
\end{rmrk}

\Cref{fig:triangle_matroid} depicts a geometric representation of a triangle matroid.
In the following we construct generalized Dowling geometries from group presentations.

\begin{defn}[Generalized Dowling Geometry]\label{def:matroid_N}
	Let $\langle S\mid R \rangle $ be a finite presentation of a group.
	By \Cref{lem:three_relations} we can assume that any relation in $R$ is of length three.
	
	We construct a triangle matroid $N_{S,R}$ on the ground set $E_{S,R}$ with basis $B = \{b^{(1)}, b^{(2)}, b^{(3)}\}$ by describing its dependent flats $\mathcal{F}_{S,R}$ of rank $2$, where we regard the indices cyclically modulo $3$:
	\begin{align*}
E_{S,R} \coloneqq 				   & \{b^{(i)}, e^{(i)}, x^{(i)}, x^{-1^{(i)}}\mid 1\le i \le 3\mbox{ and }x\in S  \}, \\
\mathcal{F}_{S,R} \coloneqq & \{\bigcup_{x\in S}\{x^{(i)}, x^{-1^{(i)}}\}\cup \{e^{(i)},b^{(i)},b^{(i+1)}\}\mid  \mbox{ for any fixed }1\le i \le 3\} \,\cup \\
& \{ \{x^{(i)},x^{-1^{(j)}},e^{(k)}\}\mid \mbox{ for } x\in S\mbox{ and pairwise different  } 1\le i,j,k\le 3 \}\, \cup \\
& \{\{e^{(1)},e^{(2)},e^{(3)}\}\} \, \cup \{  \{x^{(2)},y^{(1)},z^{(3)}\} \mid \mbox{ for any } xyz \in R \}.
\end{align*}
\end{defn}
This defines a unique matroid of rank $3$ with basis $B$, since $B$ is not contained in any of these subsets, and any two distinct such rank $2$ flats intersect in at most one element (cf.~\cite[Proposition 1.5.6]{Oxl11}).

To relate linear (group) representations of $\langle S\mid R \rangle $ to the matroid $N_{S,R}$, we investigate its weak $c$-representations with respect to the basis $B$ as defined in \Cref{def:weak} (b).
To work with such representations, we will regard them as block matrices of size $3c\times |E_{S,R}|c$ with blocks of size $c\times c$.
The block columns are indexed by the elements of $E_{S,R}$, and the $c$ columns in each block column are a basis for the corresponding subspace in a weak representation.

Recall that the \emph{regular representation} of a finite group $G$ with $n\coloneqq |G|$ over any field $\F$ is a linear representation $\rho : G \mapsto GL_n(\F)$ induced by the action of each element $g\in G$ on $G$ itself by left multiplication.

\begin{prop}\label{pro:G_weak_representation}
	Let $\langle S\mid R \rangle $ be a finite presentation of a group with relations of length three and let $G$ be a finite group with a homomorphism $\phi: G_{S,R}\rightarrow G$.
	Set $n\coloneqq |G|$ and fix any field $\F$.
	Let $\rho: G \rightarrow GL_n(\F)$ be the regular representation of $G$.
	Then the $n$-homogeneous subspace arrangement $\A_{G,\phi}$ over $\F$ given by the $3n\times |E_{S,R}|n$ block matrix~$A_{G,\phi}$ is a weak $n$-representation of the matroid $N_{S,R}$ with respect to the basis $B$:
		\[ A_{G,\phi}\coloneqq
	\begin{bheadmatrix}[b^{(1)} & b^{(2)}   	& b^{(3)} 			      &  x^{(1)} 								   & 			 & x^{(2)}			 	   					  & 	       & x^{(3)} 				 		  	 & 			][*{9}{r}]
	I_n \hphantom{0)} & 0 \hphantom{0)} 	& 0 \hphantom{0)}   & -I_n \hphantom{0)} 			   & \cdots &  0 \hphantom{0)} 					 & \cdots & \rho(\phi(x)) \hphantom{0)}	  & \cdots \\
	0 \hphantom{0)}    & I_n \hphantom{0)}  & 0 \hphantom{0)}   &  \rho(\phi(x)) \hphantom{0)} & \cdots & -I_n \hphantom{0)} 				& \cdots &  0         \hphantom{0)} 	   & \cdots \\
	0 \hphantom{0)}    & 0 \hphantom{0)} 	& I_n\hphantom{0)}  &  0 \hphantom{0)} 					 & \cdots &  \rho(\phi(x))  \hphantom{0)} & \cdots & -I_n         \hphantom{0)} & \cdots
	\end{bheadmatrix}.
	\]
\end{prop}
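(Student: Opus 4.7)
The plan is to verify the three conditions for $\A_{G,\phi}$ to be a weak $n$-representation of $N_{S,R}$ with respect to $B$: (i) each subspace $A_e$ is $n$-dimensional, (ii) $r_{\A_{G,\phi}}^n(X) \le r(X)$ for every $X \subseteq E_{S,R}$, and (iii) equality holds whenever $|Y \setminus B| \le 1$. Condition (i) is immediate: each block column of $A_{G,\phi}$ contains an invertible $n\times n$ block ($\pm I_n$), so its $n$ columns are linearly independent.

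For (ii), since $N_{S,R}$ has rank $3$, it suffices to show that $\dim A_F \le 2n$ for every rank-$2$ flat $F$. Flats of size at most two are automatic, so only the flats listed in $\mathcal{F}_{S,R}$ need to be checked. The side flats are trivial: every element on side $i$ has column support contained in row blocks $i$ and $i+1$, hence lies in a fixed $2n$-dimensional subspace of $\F^{3n}$. Each of the remaining three families---the inverse flats $\{x^{(i)}, x^{-1^{(j)}}, e^{(k)}\}$ for distinct $i,j,k$, the identity flat $\{e^{(1)}, e^{(2)}, e^{(3)}\}$, and the relation flats $\{y^{(1)}, x^{(2)}, z^{(3)}\}$ with $xyz\in R$---corresponds to a $3n\times 3n$ submatrix of exactly the form treated in part (b) of \cref{lem:block_matrices}, so its rank is $2n+\rk(BAC-I_n)$ for an appropriate triple $(A,B,C)$. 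In every case this triple is chosen so that $BAC$ equals $\rho(\phi(w))$ for a word $w$ that is trivial in $G_{S,R}$ (either $x^{-1}x$, $e$, or a defining relation); because $\phi$ and $\rho$ are homomorphisms, $BAC = I_n$ and the rank drops to exactly $2n$.

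For (iii), I would write $Y = B' \cup \{y\}$ with $B' \subseteq B$. If $y \in B$, then $A_Y$ is a coordinate subspace of dimension $n|Y|$, matching $r(Y) = |Y|$. If $y \notin B$, then the column support of $y$ lies in exactly two row blocks $\{i_1, i_2\}$ with invertible entries in those positions. A short case analysis on how $B'$ meets $\{i_1, i_2\}$ gives $\dim A_Y = n\cdot r(Y)$: the invertibility of the nontrivial blocks forces $A_y\cap A_{B'} = 0$ whenever some $j\in\{i_1,i_2\}$ satisfies $b^{(j)}\notin B'$, while $A_y\subseteq A_{B'}$ when both $b^{(i_1)}$ and $b^{(i_2)}$ lie in $B'$. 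In each subcase the result matches $n$ times the matroid rank.

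The substantive step is (ii) applied to the relation flats, since this is the only place where the assumption that $\phi$ is a homomorphism $G_{S,R}\to G$ is actually exercised, via $\rho(\phi(x))\rho(\phi(y))\rho(\phi(z)) = \rho(\phi(xyz)) = I_n$. Every other verification is a formal block-matrix manipulation using only \cref{lem:block_matrices}.
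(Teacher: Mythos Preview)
Your proposal is correct and follows essentially the same approach as the paper: verify $n$-homogeneity, bound the rank-$2$ flats (sides via block support, the remaining three-element flats via \cref{lem:block_matrices}(b) applied to a word trivial in $G_{S,R}$), and then check the equality condition for subsets $Y$ with $|Y\setminus B|\le 1$ by a short case analysis on how the single non-basis element meets the chosen basis subset. The only difference is organizational---you state the reduction ``it suffices to check rank-$2$ flats'' explicitly, whereas the paper runs through the same cases without naming the reduction---and there is no substantive divergence.
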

\begin{proof}
	Each block column of $A_{G,\phi}$ of size $3n\times n$ is indexed by an element of the matroid~$N_{S,R}$.
	This correspondence defines a subspace arrangement $\A=\{A_e\}_{e\in E_{S,R}}$ where the subspace $A_e$ is spanned by the column vectors of the block column $e$.
	So in particular we have 
	\[ 
		A_{x^{(1)}}=\spa
		\begin{bsmallmatrix}
			-I_n \\
			\rho(\phi(x)) \\
			0   
		\end{bsmallmatrix}, \quad
		A_{x^{(2)}}= \spa
		\begin{bsmallmatrix}
			0\\
			-I_n \\
			\rho(\phi(x)) \\
		\end{bsmallmatrix} \;\mbox{  and  }\;
		A_{x^{(3)}}=\spa
		\begin{bsmallmatrix}
			\rho(\phi(x)) \\
			0 \\
			-I_n
		\end{bsmallmatrix},
	\]
	for any $x\in S$ or $x^{-1}\in S$.
	The normalized rank $r^n_{A_{G,\phi}}$ of any subset of elements in $E_{S,R}$ containing only bottom, right or left elements is at most $2$ since it involves at most two non-zero blocks by definition of the matrix $A_{G,\phi}$.

	Next consider any three elements $x^{(2)},y^{(1)},z^{(3)}$ of $E_{S,R}$ with $x,y,z$ elements of $S$, inverses of elements of $S$ or neutral elements in $G_{S,R}$.
	By \Cref{lem:block_matrices} b) we have
	\begin{align*}
	r^n_{\A_{G,\phi}}( \{ x^{(2)},y^{(1)},z^{(3)}\} ) &= 2 + \frac{1}{n} \rk (\rho(\phi(x)) \rho(\phi(y)) \rho(\phi(z)) -I_n )\\
	& = 2 + \frac{1}{n} \rk (\rho(\phi(xyz-e))).
	\end{align*}
	This implies that any circuit of $N_{S,R}$ of the form $\{  x^{(2)},y^{(1)},z^{(3)} \}$ for a relation $xyz \in R$ corresponds to a subset of $\A_{G,\phi}$ of normalized rank $2$ since we have $xyz=e$ in $G_{S,R}$ in this case.
	The same argument holds for circuits of the form $\{ x^{(i)},x^{-1^{(j)}},e^{(k)}\}$ for any $x\in S$ and pairwise different indices $1\le i,j,k \le 3$.

	Clearly we have $r^n_{\A_{G,\phi}}(\{a\})=1$ for any element $a\in E_{S,R}$.
	Lastly, consider any subset of the form $\{b^{(i)},x^{(j)}\}$ for indices $1\le i,j\le 3$ and $x\in S$ or $x^{-1}\in S$.
	By symmetry we can without loss of generality assume $i=1$ and $j=1$.
	Hence, this subset corresponds to the block matrix 
	$\begin{bsmallmatrix}
	I_n & -I_n\\
	0    &  \rho(\phi(x)) \\
	0    & 0
	\end{bsmallmatrix}$
	which is of rank $2n$ since the matrix $\rho(\phi(x)) $ is by assumption invertible.
	Thus, we obtain $r^n_{\A_{G,\phi}}(\{b^{(1)},x^{(1)}\})=2$.
	The cases of subsets containing two or three elements of the basis $\{b^{(1)},b^{(2)},b^{(3)}\}$ and one additional element can be checked in the same way.
	This completes the proof that $\A_{G,\phi}$ is a weak representation of the matroid $N_{S,R}$ with respect to the basis $\{b^{(1)},b^{(2)},b^{(3)}\}$.
\end{proof}
 
 Conversely, the next proposition shows how to obtain a group from a weak representation of $N_{S,R}$.

\begin{prop}\label{pro:weak_representation_G}
	Let again be $\langle S\mid R \rangle $ be a finite presentation of a group with relations of length three and consider the matroid $N_{S,R}$.
	Any weak $c$-representation $\A$ of $N_{S,R}$ with respect to the basis $\{b^{(1)},b^{(2)},b^{(3)}\}$ over a field $\F$ yields a group $G_{\A}$ that is a finitely generated subgroup of $GL_c(\F)$ and a group homomorphism $\phi_{\A}:G_{S,R}\rightarrow G_{\A}$.
\end{prop}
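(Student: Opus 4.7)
The plan is to invert the construction of \Cref{pro:G_weak_representation}: put $\A$ into a normal form, read off an invertible $c\times c$ matrix from each element, and use the circuits of $N_{S,R}$ to show that these matrices satisfy the relations of $G_{S,R}$.

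Step 1 (coordinates and normal form). Since $|B\setminus B|=0$, the weak representation condition forces $r_\A^c(B)=r(B)=3$, so $B_i\coloneqq A_{b^{(i)}}$ are independent $c$-dimensional subspaces; restrict to their $3c$-dimensional sum and fix bases so that each $B_i$ is a coordinate block. For any non-basis element $a$ on side $i$ (between $b^{(i)}$ and $b^{(i+1)}$), the pairs $\{a,b^{(i)}\}$ and $\{a,b^{(i+1)}\}$ satisfy $|Y\setminus B|=1$ and thus have $r_\A^c=2$, which forces $A_a\cap B_i=A_a\cap B_{i+1}=0$; combined with membership in the rank-two side-flat this gives $A_a\subseteq B_i+B_{i+1}$. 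Hence $A_a$ is the column span of $\begin{bsmallmatrix}-I_c\\ M_a\\ 0\end{bsmallmatrix}$ (cyclically placed) for a unique $M_a\in GL_c(\F)$.

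Step 2 (identifying matrices). A key observation is that for any two non-basis elements $a,a'$ lying on different sides the intersection $A_a\cap A_{a'}$ lies in some $B_j$ with $j\in\{i,i+1\}$, and therefore vanishes; so $r_\A^c(\{a,a'\})=2$. By monotonicity, each triple $\{e^{(1)},e^{(2)},e^{(3)}\}$, $\{x^{(i)},x^{-1^{(j)}},e^{(k)}\}$, and $\{x^{(2)},y^{(1)},z^{(3)}\}$ (for $xyz\in R$) has $r_\A^c$ sandwiched between $2$ and the matroid rank $2$, hence equal to $2$. Applying \Cref{lem:block_matrices}(b) converts each such circuit into a matrix equation among the $M_a$. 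The residual freedom of changing bases on $B_1,B_2,B_3$ independently transforms a side-$i$ matrix by $M_a\mapsto P_{i+1}M_a P_i^{-1}$, and I use it to normalize $M_{e^{(1)}}=M_{e^{(2)}}=M_{e^{(3)}}=I_c$. After this normalization, the six circuits $\{x^{(i)},x^{-1^{(j)}},e^{(k)}\}$ with distinct $i,j,k$ collapse to $M_{x^{(1)}}=M_{x^{(2)}}=M_{x^{(3)}}\eqqcolon A_x$ and $M_{x^{-1^{(i)}}}=A_x^{-1}$.

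Step 3 (the homomorphism). Each relation $xyz\in R$ produces the circuit $\{x^{(2)},y^{(1)},z^{(3)}\}$, to which \Cref{lem:block_matrices}(b) now assigns the matrix identity $A_xA_yA_z=I_c$, matching the group relation $xyz=e$. Hence $x\mapsto A_x$ extends to a group homomorphism $F_S\to G_\A\coloneqq\langle A_x\mid x\in S\rangle\leq GL_c(\F)$ whose kernel contains the normal closure of $R$, and this descends to the required $\phi_\A:G_{S,R}\to G_\A$. I expect the main obstacle to be the Step 2 bookkeeping: with three copies of each generator across the sides and six symmetry-related circuits per generator, one must carefully track cyclic positions and inverses to conclude that the three side-matrices really coincide. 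A secondary technical point is the rank-$2$ lower bound on triples of non-basis elements, which is not directly supplied by the weak representation definition and requires the intersection-with-$B_j$ argument above.
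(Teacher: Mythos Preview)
Your proposal is correct and follows essentially the same approach as the paper's proof: put the representation into block normal form using the weak representation condition on sets $Y$ with $|Y\setminus B|\le 1$, normalize the $e^{(i)}$ to identity matrices, then use \Cref{lem:block_matrices}(b) on the circuits to deduce first that the three side-matrices of each generator agree and second that the relations hold. One remark: your ``secondary technical point'' about the rank-$2$ \emph{lower} bound on triples is unnecessary, since \Cref{lem:block_matrices}(b) only needs the \emph{upper} bound $r_\A^c\le r=2$ (which the weak representation condition gives directly) to conclude $\rk(BAC-I_c)=0$; the paper accordingly never argues for a lower bound.
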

\begin{proof}
	Let $\A=\{A_e\}_{e\in E_{S,R}}$ be a weak $c$-representation of the matroid $N_{S,R}$ with respect to the basis $\{b^{(1)},b^{(2)},b^{(3)}\}$ over a field $\F$.
	The arrangements $\A$ yields a $3c \times |E_{S,R}|c$ block matrix over $\F$ where each $3c\times c$ block column is indexed by an element of the matroid $N_{S,R}$ and contains a basis of the corresponding subspace in $\A$.
	
	We perform the following invertible operations which preserve the underlying combinatorial structure:
	\begin{enumerate}
		\item After a change of coordinates of the ambient vector space $\F^{3c}$, we can assume that the matroid basis $\{b^{(1)},b^{(2)},b^{(3)}\}$ is represented by the block matrix 
		$\begin{bsmallmatrix}
		I_c & 0 & 0\\
		0    &  I_c& 0 \\
		0    & 0 & I_c
		\end{bsmallmatrix}$.
		This can be accommodated by multiplying the entire matrix by an invertible matrix of size $3c$ from the right.
		\item Consider $x^{(1)}$ any bottom element of the matroid $N_{S,R}$.
		Since $r_{S,R}(\{b^{(1)},x^{(1)},b^{(2)}\})=r_{S,R}(\{b^{(1)},x^{(1)}\})=r_{S,R}(\{x^{(1)},b^{(2)}\})=2$ where $r_{S,R}$ is the rank function of the matroid $N_{S,R}$, the normalized rank $r^c_{\A}$ of the corresponding sets of subspaces of $\A$ must be $2$ as well.
		Hence, the block column of $x^{(1)}$ is of the form
		 $\begin{bsmallmatrix}
		X_1' \\
		X_1''  \\
		0
		\end{bsmallmatrix}$ where $X',X''$ are invertible $c\times c$ matrices.
		Analogous arguments show that the block columns of any right and left elements of $N_{S,R}$ are of the form
		$\begin{bsmallmatrix}
		0 \\
		X_2' \\
		X_2''  
		\end{bsmallmatrix}$ and $
		\begin{bsmallmatrix}
		X_3'  \\
		0 \\
		X_3''
		\end{bsmallmatrix}$ for suitable invertible $c\times c$ matrices $X_2',X_2'',X_3',X_3''$ respectively.
		\item After multiplying the block column $e^{(1)}$ and the second block row from the right with an invertible matrix of size $c$ we can assume that the block column $e^{(1)}$ is
		$\begin{bsmallmatrix}
		-I_c \\
		I_c \\
		0
		\end{bsmallmatrix}$.
		Similarly after multiplying the block column $e^{(2)}$ and the third block row, we can assume that the block column $e^{(2)}$ is $\begin{bsmallmatrix}
		0 \\
		-I_c \\
		I_c 
		\end{bsmallmatrix}$.
		Subsequently, we perform a multiplication from the right on the block column $e^{(3)}$ after which it is of the form $\begin{bsmallmatrix}
		E_3 \\
		0 \\
		-I_c 
		\end{bsmallmatrix}$ for some invertible $c\times c$ matrix $E_3$.
		Since $\{e^{(1)},e^{(2)},e^{(3)}\}$ is a circuit of $N_{S,R}$ \Cref{lem:block_matrices} b) implies that $E_3-I_c=0$ which implies $E_3=I_c$.
		\item Lastly, by multiplying every block column again by a suitable invertible matrix of size $c$ from the right we can assume that the block matrix defining $\A$ is of the following form where $T_{x^{(i)}}$ is an invertible matrices for any $x\in S$ and $1\le i \le 3$:
		\[ 
		\begin{bheadmatrix}[b^{(1)} & b^{(2)}   	& b^{(3)} 			      &  e^{(1)} &	x^{(1)} 				& 	        & e^{(2)} & x^{(2)}			 	   	  & 	       &  e^{(3)} &x^{(3)} 				 & 			][*{12}{r}]
		I_c \hphantom{0)} & 0 \hphantom{0)} 	& 0 \hphantom{0)}   &-I_c \hphantom{0)}  & -I_c \hphantom{0)} 	& \cdots & 0 \hphantom{0)}      & 0 \hphantom{0)} 		& \cdots & I_c \hphantom{0)}    & T_{x^{(3)} }  \hphantom{0)}  & \cdots \\
		0 \hphantom{0)}    & I_c \hphantom{0)}  & 0 \hphantom{0)}   &  I_c \hphantom{0)}  & T_{x^{(1)} } \hphantom{0)}  & \cdots & -I_c \hphantom{0)}  & -I_c \hphantom{0)}  & \cdots & 0 \hphantom{0)}      &  0     \hphantom{0)} & \cdots \\
		0 \hphantom{0)}    & 0 \hphantom{0)} 	& I_c\hphantom{0)}  &  0 \hphantom{0)}    & 0 \hphantom{0)} 	  & \cdots &  I_c \hphantom{0)}   & T_{x^{(2)} } \hphantom{0)}    & \cdots & -I_c \hphantom{0)}  & -I_c \hphantom{0)} & \cdots
		\end{bheadmatrix}.
		\]
	\end{enumerate}

	Now consider the elements $x^{(1)},x^{(2)},x^{(3)},x^{-1^{(1)}},x^{-1^{(2)}},x^{-1^{(3)}}$:
	The circuits of the form $\{ x^{(i)},x^{-1^{(j)}},e^{(k)}\}$ for pairwise different  $ 1\le i,j,k\le 3$ in connection with \Cref{lem:block_matrices} b) imply  $T_{x^{(i)}}^{-1}=T_{x^{-1^{(j)}}}$ for any $i\neq j$.
	A second application of the same lemma implies $T_{x^{(i)}}=T_{x^{(j)}}$ and $T_{x^{-1^{{(i)}}}}=T_{x^{-1^{(j)}}}$ for all $i\neq j$.
	
	For a relation $xyz \in R$ the elements $\{  x^{(2)},y^{(1)},z^{(3)}\}$ form a circuit of the matroid $N_{S,R}$.
	Since the arrangement $\A$ is a weak $c$-representation of $N_{S,R}$ the normalized rank of the corresponding subspaces is at most $2$ which implies that the block matrix
	$\begin{bsmallmatrix}
	0 	   & -I_c  &   T_{z^{(3)}}\\
	-I_c  & T_{y^{(1)}}  & 0\\
	T_{x^{(2)}}  &  0 	   & -I_c
	\end{bsmallmatrix}$
	is of rank at most $2c$.
	Thus, \Cref{lem:block_matrices} b) implies 
	\begin{equation}\label{eq:matrices_relation}
		T_{x^{(2)}} T_{y^{(1)}} T_{z^{(3)}} = I_c.
	\end{equation}
	
	Now we set  the group $G_{\A}$ to be generated by the matrices $T_{x^{(1)}}$ for all $x\in S$.
	Hence, $G_{\A}$ is a finitely generated subgroup of $GL_c(\F)$.
	We define the group homomorphism $\phi_{\A}:G_{S,R}\rightarrow G_{\A}$ by setting $\phi_{\A}(x) \coloneqq T_{x^{(1)}}$ for any $x\in S$.
	\Cref{eq:matrices_relation} implies that this homomorphism is well-defined, i.e. respects the relations $R$ of $G_{S,R}$.
\end{proof}

In the following theorem we establish the connection between the UWPFG and weak $c$-representations.
\Cref{lem:three_relations} allows to assume without loss of generality that the relations are of length three and the word $w$ is an element of $S$.

\begin{theorem}\label{theo:weak_undecidable}
	Consider a UWPFG instance given by finite presentation $\langle S\mid R \rangle $  and an element $w\in S$.
	Then, the answer to this instance is negative, i.e.\ there exists a finite group $G$ with a homomorphism $\phi: G_{S,R}\rightarrow G$ and $\phi(w)\neq e_G$, if and only if there exists a weak $c$-representation $\A=\{A_e\}_{e\in E_{S,R}}$ over a field $\F$ of the matroid $N_{S,R}$ with respect to the basis $\{b^{(1)},b^{(2)},b^{(3)}\}$ such that
	\begin{equation}\label{eq:rank_2}
	r^c_{\A}( \{w^{(1)},e^{(1)}\}) > 1.
	\end{equation}
\end{theorem}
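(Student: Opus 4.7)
The plan is to reduce both directions of the equivalence to a single rank computation that identifies the quantity appearing in \eqref{eq:rank_2} with the image of $w$ under the homomorphisms supplied by \Cref{pro:G_weak_representation} and \Cref{pro:weak_representation_G}.

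First I would carry out the key computation. In the normal form exhibited in the proof of \Cref{pro:weak_representation_G}, the block columns of any weak $c$-representation $\A$ corresponding to $e^{(1)}$ and $w^{(1)}$ take the shape $\begin{bsmallmatrix}-I_c\\I_c\\0\end{bsmallmatrix}$ and $\begin{bsmallmatrix}-I_c\\T_{w^{(1)}}\\0\end{bsmallmatrix}$ respectively (for $e^{(1)}$ one has $T_{e^{(1)}}=I_c$). Placing them side by side and applying \Cref{lem:block_matrices}\eqref{it:AB} with $A=I_c$, $B=T_{w^{(1)}}$ yields
\[
r^c_\A\bigl(\{w^{(1)},e^{(1)}\}\bigr) \;=\; 1 + \tfrac{1}{c}\rk\bigl(T_{w^{(1)}}-I_c\bigr),
\]
so \eqref{eq:rank_2} is equivalent to $T_{w^{(1)}}\neq I_c$, i.e.\ to $\phi_\A(w)\neq e$ in $G_\A$.

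For the $(\Rightarrow)$ direction, assume there exist a finite group $G$ and a homomorphism $\phi:G_{S,R}\to G$ with $\phi(w)\neq e_G$. By \Cref{pro:G_weak_representation}, the arrangement $\A_{G,\phi}$ is a weak $n$-representation of $N_{S,R}$ for $n=|G|$, and inspecting its matrix one sees $T_{w^{(1)}}=\rho(\phi(w))$. Since the regular representation is faithful, $\rho(\phi(w))\neq I_n$ (in fact $\rk(\rho(\phi(w))-I_n)\ge n/2$ by \Cref{lem:rank_reg_rep}), so the identity above delivers $r^n_{\A_{G,\phi}}(\{w^{(1)},e^{(1)}\})>1$.

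For the $(\Leftarrow)$ direction, assume $\A$ is a weak $c$-representation over some field $\F$ satisfying \eqref{eq:rank_2}. \Cref{pro:weak_representation_G} supplies a finitely generated subgroup $G_\A\le GL_c(\F)$ and a homomorphism $\phi_\A:G_{S,R}\to G_\A$, and the computation above forces $\phi_\A(w)\neq e$. The one remaining obstacle is that $G_\A$ need not itself be finite; here I would invoke Mal'cev's theorem that every finitely generated linear group is residually finite, yielding a finite-index normal subgroup $N\triangleleft G_\A$ with $\phi_\A(w)\notin N$. The composition $G_{S,R}\xrightarrow{\phi_\A} G_\A \twoheadrightarrow G_\A/N$ is then a homomorphism to a finite group that does not kill $w$, witnessing the negative answer to the UWPFG instance. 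This appeal to residual finiteness of linear groups is the main non-trivial input; everything else is a direct assembly of the two preceding propositions with the rank formula from \Cref{lem:block_matrices}.
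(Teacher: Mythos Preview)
Your proof is correct and follows essentially the same approach as the paper: both directions use \Cref{lem:block_matrices}\eqref{it:AB} to identify $r^c_\A(\{w^{(1)},e^{(1)}\})$ with $1+\tfrac{1}{c}\rk(T_{w^{(1)}}-I_c)$, then invoke \Cref{pro:G_weak_representation} with the regular representation for $(\Rightarrow)$ and \Cref{pro:weak_representation_G} followed by Mal'cev's residual finiteness theorem for $(\Leftarrow)$. The only cosmetic differences are that you factor out the rank computation once at the start and add a parenthetical appeal to \Cref{lem:rank_reg_rep}, which the paper does not need here.
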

\begin{proof}
	First, assume that there exists a finite group $G$ with a homomorphism $\phi: G_{S,R}\rightarrow G$ and $\phi(w)\neq e_G$.
	Set $n\coloneqq |G|$.
	\Cref{pro:G_weak_representation} shows that there exists a $n$-homogeneous subspace arrangement $\A_{G,\phi}=\{A_e\}_{e\in E_{S,R}}$ over any field $\F$ induced by the regular representation $\rho: G \rightarrow GL_n(\F)$ weakly representing $N_{S,R}$.
	To investigate the word $ w\in S$ in $G$ we can compute using \Cref{lem:block_matrices} a)
	\begin{align*}
		r^n_{\A_{G,\phi}}( \{ w^{(1)},e^{(1)}\}) &= \frac{1}{n} \rk\begin{bsmallmatrix}
			-I_n	   & -I_n   \\
			\rho(\phi(w)) & \rho(\phi(e))  \\
			0 &  0 	  
		\end{bsmallmatrix}\\
		& = 1 + \frac{1}{n} \rk (\rho(\phi(w))-I_n).
	\end{align*}
	The assumption $\phi(w)\neq e_G$ implies $\rho(\phi(w))\neq I_n$.
	Therefore, \Cref{eq:rank_2} holds.
	
	Conversely, assume that there exists a weak $c$-representation $\A=\{A_e\}_{e\in E_{S,R}}$ over a field~$\F$ of the matroid $N_{S,R}$  with respect to the basis $\{b^{(1)},b^{(2)},b^{(3)}\}$ such that Equation~\eqref{eq:rank_2} holds.
	\Cref{pro:weak_representation_G} shows that there exists a group $G_{\A}$ that is a finitely generated subgroup of $GL_c(\F)$ with a group homomorphism $\phi_{\A}: G_{S,R}\rightarrow G_{\A}$.
	By construction of $G_\A$ we can compute using again \Cref{lem:block_matrices}~a)
	\begin{align*}
		r^c_{\A} ( \{w^{(1)},e^{(1)}\})&= \frac{1}{c} \rk\begin{bsmallmatrix}
			-I_c	   & -I_c   \\
			\phi_{\A}(w) & \phi_{\A}(e)  \\
			0 &  0 	  
		\end{bsmallmatrix}\\
		& = 1 + \frac{1}{c} \rk  (\phi_{\A}(w)-I_c).
	\end{align*}
	Thus, Equation \eqref{eq:rank_2} implies $\phi_{\A}(w)\neq I_c$.
	
	By Malcev's thoerem the group $G_{\A}$ is residually finite since it is a finitely generated linear group~\cite{Mal40}.
	Therefore,  there exists a finite group $H$ with a group homomorphism $\phi_H:G_{\A} \rightarrow H$ such that $\phi_H(\phi_{\A}(w))\neq e_H$ where $e_H$ is the neutral element in $H$.
	Hence, the pair $(H,\phi_H \circ \phi_{\A})$ is an instance of $\langle S\mid R \rangle $ with $\phi_H(\phi_{\A}(w))\neq e_H$ which means the given UWPFG instance has a negative answer.
\end{proof}

\section{Algebraic Inflation}\label{sec:alg_reg}

We develop an algebraic inflation procedure as outlined in \Cref{sec:outline} to produce a $c$-admissible arrangement from  a weak $c$-representation.
The inflation consists of two steps.
Both steps use an elementary inflation procedure which we describe first.

\subsection{Elementary Inflation}
Let $\mathcal{U}=\left\{ U_{e}\right\} _{e\in E}$ be a subspace arrangement, $c\in \N$ and $S\subseteq E$ a subset.

Intuitively, the idea is to pick a subspace $W$ of the ambient vector space of the arrangement, and then to extend each subspace $U_e$ by a generic $c$-dimensional subspace of $W$.

Formally, denote the ambient vector space of $\U$ by $V$ and embed $V$ together with the arrangement $\U$ in a larger vector space $\widetilde{V}$ of large enough dimension.
Let $S\subseteq E$ and let $W\subseteq \widetilde{V}$ be a subspace of dimension at least $c$.
Note that $W$ may intersect $V$ non-trivially.
In this situation we construct a new subspace arrangement $\widetilde{\mathcal{U}}$ as follows.
\begin{enumerate}
	\item Choose $|S|$-many generic subspaces $W_{1},\ldots,W_{|S|} $ of $W$, each of dimension $c$.
	\item Denote $S=\{s_1,\dots,s_{|S|}\}$. The new subspace arrangement $\widetilde{\mathcal{U}}$ lives in $\widetilde{V}$ and consists of the subspaces $\widetilde{U}_{s_i} \coloneqq U_{s_i}+W_i$ for $i=1,\dots,|S|$ together with $\widetilde{U}_{e} \coloneqq U_e$ for all $e \in E\setminus S$.
\end{enumerate}

\begin{rmrk} Up to an automorphism of $\widetilde{V}$ fixing $V$, a subspace $W\subseteq \widetilde{V}$ is determined by its dimension together with its intersection with $V$.
	This will suffice for our uses of this construction, and we will give this data instead of constructing $W,\widetilde{V}$ in what follows.
	
	Note in particular that it suffices to take $\tilde{V}$ of dimension $\dim(V) + \dim(W)$.
\end{rmrk}
\begin{defn}
	The arrangement resulting from an application of the elementary inflation construction above to the arrangement $\U$, the subset $S \subseteq E$, and a subspace $W$ of dimension $d$ satisfying $W' = W \cap V$ will be denoted by $\mathcal{EI}_c(\U, S, d, W')$. The subspace $W$, when needed explicitly, will be denoted by $\mathcal{W}_c(\U, S, d, W')$.
\end{defn}

It is not difficult to describe the effect of an elementary inflation on the rank function.
To simplify the exposition we make rather strong assumptions on $d$ and $W'$ which are satisfied in all our applications of the elementary inflation below.
\begin{lemm}\label{lem:inflation}
	Let $\U=\{U_e\}_{e\in E}$ be a subspace arrangement in $V$, $S\subseteq E$ a subset, $W'\subseteq V$ a subspace, and $c,d\in \mathbb{N}$.
	Assume that $c(|S|-1)\le d\le c|S|$ and $\dim W'\le c$.
	Define $\U' \coloneqq \mathcal{EI}_c(\U, S, d, W')$.
	Then for any $T\subseteq E$ it holds
	\[
	\dim U'_T -\dim U_T =\begin{cases}
	c|S\cap T|, & S \not\subseteq T,\\
	d-\dim(U_T \cap W' ), & S\subseteq T.
	\end{cases}
	\] 
\end{lemm}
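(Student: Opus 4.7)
The plan is to apply the dimension formula to the decomposition
\[
U'_T \;=\; U_T \;+\; \sum_{s_i \in S \cap T} W_i,
\]
and evaluate the two resulting correction terms using the genericity of the $W_i$'s inside $W$. Setting $k := |S \cap T|$, the computation reduces to determining $\dim \sum_i W_i$ and $\dim (U_T \cap \sum_i W_i)$, where the index $i$ ranges over $\{i : s_i \in S \cap T\}$.

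For the first term I would invoke the standard fact that $k$ generic $c$-dimensional subspaces of the $d$-dimensional space $W$ span a subspace of dimension $\min(kc, d)$. The interval hypothesis $c(|S|-1) \le d \le c|S|$ is designed precisely so that the minimum switches at the threshold $k = |S|$: if $k = |S|$ (the case $S \subseteq T$) then $kc \ge d$, so $\sum_i W_i = W$; otherwise $k \le |S|-1$, and then $kc \le c(|S|-1) \le d$, so $\sum_i W_i$ has dimension exactly $kc$.

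For the intersection term, the key structural observation is that $U_T \subseteq V$ and $\sum_i W_i \subseteq W$, hence
\[
U_T \cap \sum_{i} W_i \;\subseteq\; V \cap W \;=\; W',
\]
so $U_T \cap \sum_i W_i = (U_T \cap W') \cap \sum_i W_i$. In the case $S \subseteq T$ the sum $\sum_i W_i$ equals $W$, and the intersection collapses to $U_T \cap W'$; combined with $\dim \sum_i W_i = d$ this gives the second line of the formula. In the case $S \not\subseteq T$, I would argue that the generic choice of the $W_i$'s forces $(U_T \cap W') \cap \sum_i W_i = 0$, so that $\dim U'_T - \dim U_T = kc = c|S\cap T|$, which is the first line.

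The vanishing claim in the case $S \not\subseteq T$ is the main step. Since $\dim(U_T \cap W') \le \dim W' \le c$ and $\sum_i W_i$ is a generic $kc$-dimensional subspace of $W$ with $k \le |S|-1$, the hypotheses $d \ge c(|S|-1)$ and $\dim W' \le c$ are exactly what leaves enough room inside $W$ to separate the generic sum from each fixed subspace $U_T \cap W'$. This vanishing is a Zariski-open condition on the tuple $(W_1, \dots, W_{|S|})$, so it suffices to verify the condition is non-vacuous, then invoke the genericity framework set up in \Cref{sec:pre}. Because the formula is asserted for all $T \subseteq E$ and the family $\{U_T \cap W'\}_{T \subseteq E}$ is finite, one generic tuple serves all $T$ simultaneously over the infinite base field, completing the case analysis and hence the proof.
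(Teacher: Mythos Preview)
Your argument is correct and follows essentially the same route as the paper's proof: both reduce to the identity $\dim U'_T - \dim U_T = \dim W_{S\cap T} - \dim(U_T \cap W_{S\cap T})$, evaluate $\dim W_{S\cap T} = \min\{c|S\cap T|, d\}$ by genericity, and then split into the two cases, using $U_T \cap W_{S\cap T} \subseteq V \cap W = W'$ to handle each. Your explicit remark that the vanishing condition is Zariski-open and that finitely many subsets $T$ can be handled simultaneously over an infinite field is a point the paper leaves implicit.
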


\begin{proof}
	Denote the subspace used in the elementary inflation by $W\coloneqq \mathcal{W}_c(\U, S, d, W')$ and $W_{S\cap T}\coloneqq \sum_{t\in T\cap S} W_t$ where $W_t$ are the generic subspaces in $W$ as defined in the construction of the elementary inflation.
	Then, we can compute
	\begin{align}
		\dim U'_T -\dim U_T &= \dim(U_T+W_{S\cap T}) - \dim U_T \nonumber\\
		& = \dim(W_{S\cap T}) - \dim(U_T\cap W_{S\cap T}) \nonumber\\
		& = \min\{ d,c|S\cap T| \} - \dim(U_T\cap W_{S\cap T}) ,\label{eq:inflation1}
	\end{align}
	where the last equality holds since the subspaces $W_t$ are chosen generically in the subspace $W$ of dimension $d$.
	Now, we distinguish two cases:
	\begin{description}
		\item[Case 1] $S\not\subseteq T$. By the assumption on $W$ we have $\dim(U_T\cap W)\le \dim (W)\le c$.
		Thus, we have $\dim(U_T\cap W)\le d-c|S\cap T|$ by the assumption on $d$.
		Hence, the genericity of $W_t$  implies $\dim(U_T\cap W_{S\cap T})=0$ and we obtain by \Cref{eq:inflation1}  $\dim U'_T -\dim U_T = c|S\cap T|$.
		\item[Case 2] $S\subseteq T$. In this case, $W_{S\cap T}=W$ which implies that $U_T\cap W_{S\cap T}=U_T\cap W'$.
		Therefore, \Cref{eq:inflation1} implies  $\dim U'_T -\dim U_T = d - \dim(U_T\cap W')$.\qedhere
	\end{description}
\end{proof}

\subsection{Extensions and Full Arrangements}\label{sec:sub_ext}

The main idea of the inflation construction is to extend a given weak $c$-representation outside of the subspace spanned by the basis of the matroid in such a way that, after sufficiently many applications of the procedure, the ranks of the subspaces no longer depend on the particular weak representation but only on the combinatorics of the matroid. Further, the original weak representation can be reconstructed from any iterated inflation (but this last property does not hold for the rank functions).

Before describing the details of the construction, we define a class of subspace arrangements containing those that arise from iterated inflations.

\begin{defn}\label{def:extension}
	Let $\U=\left\{ U_{e}\right\} _{e\in E}$ be a subspace arrangement in a vector space $V$ and let $M=(E,r)$ be a triangle matroid with a distinguished basis $B$.
	We call $\U$ an \emph{extension of a weak $c$-representation of $M$ with respect to $B$}, or for short an extension of $M$, if $\{U_e\cap U_B\}_{e\in E}$ is a weak $c$-representation of $M$ with respect to $B$ and we have for every $T\subseteq E$ and $D\subseteq B$
	\begin{equation}\label{eq:reg}
		\dim(U_T\cap U_D)\le c(r(T)+r(D)-r(T\cup D)).
	\end{equation}
\end{defn}

If inflations of weak $c$-arrangements are to be extensions, it follows that the inflation construction may never modify the dimensions of subspaces corresponding to subsets of the original basis, because one can take $T=D$ in the equation above. 

Further, if $\U=\{U_e\}_{e\in E}$ extends a weak $c$-representation of $M$ and its rank function depends only on the combinatorics of $M$, the dimension of $U_S \cap U_B$ for some $S\subseteq E$ cannot depend on the original weak $c$-representation.
The following definition of a defect gives the difference between the dimension of each intersection $U_S \cap U_B$ and what it ought to be.

\begin{defn}\label{def:regular}
	Let $M=(E,r)$ be a triangle matroid with a distinguished basis $B$ and $\U=\left\{ U_{e}\right\} _{e\in E}$ an extension of a weak $c$-representation of $M$ with respect to $B$.
	For a subset $S\subseteq E$ we define the \emph{defect} of $S$ to be $\de_{\U} (S)= c\cdot r(S)-\dim(U_S\cap U_B)$.
	We call a subset $S\subseteq E$ \emph{full with respect to the basis $B$}, or just full for short,
	if $\de_{\U}(S)=0$.	
\end{defn}

The following lemmas are crucial to our inflation procedure.
At first, we provide bounds for the defect of a subset.

\begin{lemm}\label{lemm:defect}
	Let $M=(E,r)$ be a triangle matroid with distinguished basis $B$ and  $\U=\{U_e\}_{e\in E}$ be an extension of a weak $c$-representation of $M$ with respect to $B$.
	Let $S\subseteq E\setminus B$ such that every $S'\subsetneq S$ is full.
	Then we have
	\[
	0\leq \de_{\U}(S)\le c.
	\]
\end{lemm}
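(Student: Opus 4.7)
The plan is to establish the two inequalities separately; both should follow quickly from the definitions once the right specialization of the extension inequality is chosen.

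For the lower bound $\de_\U(S)\ge 0$, my approach is to apply inequality \eqref{eq:reg} with $T=S$ and $D=B$. Since $M$ has rank three and $B$ is a basis, $r(B)=3$ and $r(S\cup B)=3$, so the right-hand side of \eqref{eq:reg} collapses to $c\cdot r(S)$. This yields $\dim(U_S\cap U_B)\le c\cdot r(S)$, which is exactly the lower bound on the defect.

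For the upper bound $\de_\U(S)\le c$, my plan is to exploit the fullness hypothesis on the proper subsets of $S$. The case $S=\emptyset$ is immediate from $r(\emptyset)=0$, so assume $S$ is nonempty and pick any $s\in S$. By hypothesis $S\setminus\{s\}$ is full, so $\dim(U_{S\setminus\{s\}}\cap U_B)=c\cdot r(S\setminus\{s\})$. Monotonicity of dimension under set inclusion then gives $\dim(U_S\cap U_B)\ge c\cdot r(S\setminus\{s\})$, and the elementary matroid fact that removing a single element decreases the rank by at most one yields $r(S\setminus\{s\})\ge r(S)-1$. Combining these, $\dim(U_S\cap U_B)\ge c(r(S)-1)$, which rearranges to $\de_\U(S)\le c$.

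I do not anticipate any serious obstacle in executing this plan: the only conceptual point is recognizing that the rank-three triangle structure forces $r(S\cup B)=r(B)$, so that the defining extension inequality is tight enough to give nonnegativity of the defect. The upper bound is then nothing more than monotonicity of dimension combined with the standard single-element rank bound for matroids.
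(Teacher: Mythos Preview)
Your proposal is correct and follows essentially the same approach as the paper. The only cosmetic differences are that the paper phrases the lower bound more tersely (just citing that $\U$ is an extension, which is exactly your application of \eqref{eq:reg} with $D=B$), and for the upper bound the paper selects a proper subset $S'\subsetneq S$ with $r(S')=r(S)-1$ exactly, whereas you take $S'=S\setminus\{s\}$ for an arbitrary $s$ and use $r(S\setminus\{s\})\ge r(S)-1$; both variants yield the same inequality.
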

\begin{proof}
	The fact that $\U$ is an extension of $M$ implies $\dim(U_S\cap U_B)\leq c\cdot r(S)$ which implies $0\le \de_{\U} (S)$.
	Now, choose a subset $S'\subsetneq S$ such that $r(S')=r(S)-1$.
	The assumption that $S'$ is full implies
	\[
	c\cdot(r(S)-1)=c\cdot r(S')=\dim(U_{S'}\cap U_B)\le \dim(U_S\cap U_B).
	\]
	Rearranging the terms yields $\de_{\U}(S)\le c$.
\end{proof}

As a second step, we relate the defect with the subset of the basis $C_M(S)$.
Recall, that for a triangle matroid $M=(E,r)$ with basis $B$ the set $C_M(S)\subseteq B$ is defined such that $r(S)=r(S \cup C_M(S))=r(C_M(S))$ if such a subset exists and  $C_M(S)=\emptyset$ otherwise. Since the closures of distinct subsets of a basis are distinct, this defines $C_M(S)$ uniquely.
\begin{lemm}\label{lemm:triangle_regularity}
	Let $M=(E,r)$ be a triangle matroid with distinguished basis $B$ and  $\U=\{U_e\}_{e\in E}$ be an extension of a weak $c$-representation of $M$ with respect to $B$.
	Let $S\subseteq E\setminus B$. If $C_M(S)=\emptyset$ then $S$ is full in $\U$.
\end{lemm}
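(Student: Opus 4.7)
The plan is to establish both $\de_{\U}(S) \geq 0$ and $\de_{\U}(S) \leq 0$ under the hypothesis $C_M(S) = \emptyset$. The nonnegativity follows at once from the extension inequality \eqref{eq:reg} applied with $T = S$ and $D = B$: since $B$ is a basis of the rank-$3$ matroid $M$, we have $r(B) = r(S \cup B) = 3$, and hence $\dim(U_S \cap U_B) \leq c\, r(S)$.

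For the reverse inequality $\dim(U_S \cap U_B) \geq c\, r(S)$, I proceed by cases on $r(S) \in \{0,1,2,3\}$. The case $r(S) = 3$ does not occur, since then $B$ itself witnesses $C_M(S) = B \neq \emptyset$. The cases $r(S) \in \{0,1\}$ follow directly from the weak-representation equality $r^c_{\{U_e \cap U_B\}}(Y) = r(Y)$ when $|Y \setminus B| \leq 1$: applied to a singleton $\{e\} \subseteq E \setminus B$ this yields $\dim(U_e \cap U_B) = c\, r(\{e\})$, handling loops and the rank-$1$ singleton case; applied to a pair $\{e, e'\}$ of parallel elements, a dimension count in $U_B$ forces $U_e \cap U_B = U_{e'} \cap U_B$, so $\dim(U_S \cap U_B) \geq c$ whenever $r(S) = 1$.

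The main work is the case $r(S) = 2$. I first argue $\cl(S) \cap B = \emptyset$. If $\cl(S)$ contained two vertices they would form $C_M(S)$, contradicting our hypothesis. If $\cl(S)$ contained exactly one vertex, say $b^{(1)}$, then no element of $S$ could lie on either side incident to $b^{(1)}$: such an element together with $b^{(1)}$ would already span that whole side, forcing $\cl(S)$ to contain a second vertex. But then $S$ would lie entirely on the opposite side $\{b^{(2)}, b^{(3)}\}$, making $\cl(S)$ equal to that side and yielding the same contradiction. Thus $\cl(S)$ is not itself a side of the triangle, and any independent pair $s_1, s_2 \in S$ must lie on distinct sides; say $s_1 \in \cl(\{b^{(1)}, b^{(2)}\})$ and $s_2 \in \cl(\{b^{(2)}, b^{(3)}\})$. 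Applying the weak-representation equality to $\{s_1, b^{(1)}, b^{(2)}\}$, $\{s_2, b^{(2)}, b^{(3)}\}$, $\{s_1, b^{(1)}\}$, and $\{s_1, b^{(2)}\}$ (each with at most one element outside $B$) gives $U_{s_1} \cap U_B \subseteq U_{b^{(1)}} + U_{b^{(2)}}$, $U_{s_2} \cap U_B \subseteq U_{b^{(2)}} + U_{b^{(3)}}$, and $(U_{s_1} \cap U_B) \cap U_{b^{(j)}} = 0$ for $j = 1, 2$ by dimension counts. Since the weak representation applied to $B$ itself yields the direct sum decomposition $U_B = U_{b^{(1)}} \oplus U_{b^{(2)}} \oplus U_{b^{(3)}}$, the first two inclusions imply $(U_{s_1} \cap U_B) \cap (U_{s_2} \cap U_B) \subseteq U_{b^{(2)}}$, and combining with the third vanishing yields that this intersection is trivial. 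Therefore $\dim(U_S \cap U_B) \geq \dim\bigl((U_{s_1} \cap U_B) + (U_{s_2} \cap U_B)\bigr) = 2c$, as required.

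The main obstacle is the rank-$2$ case: the combinatorial step of excluding the possibility that $\cl(S)$ contains exactly one basis vertex crucially exploits the fact that every element of a triangle matroid lies on one of the three sides, and the subsequent dimension bookkeeping relies on the direct-sum decomposition of $U_B$ inherited from the weak-representation equality on $B$.
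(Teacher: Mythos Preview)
Your overall strategy---case analysis on $r(S)$, ruling out $r(S)=3$, and for $r(S)=2$ finding an independent pair $s_1,s_2$ on distinct sides with $A_{s_1}\cap A_{s_2}=0$---is exactly the paper's approach. The paper's proof of $A_{s_1}\cap A_{s_2}=0$ is slightly more direct: it observes $A_C\oplus A_{s_2}=A_B$ for the side $C$ containing $s_1$, whereas you route through the shared vertex, but these are equivalent.

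There is one technical slip. Your intermediate claim that $\cl(S)\cap B=\emptyset$ is not true for arbitrary triangle matroids: take $S=\{s,s'\}$ with $s$ parallel to $b^{(1)}$ and $s'$ on the side $\{b^{(2)},b^{(3)}\}$ but not parallel to either of those vertices. Then $r(S)=2$, and $\cl(S)=\cl(\{b^{(1)},s'\})$ contains $b^{(1)}$ but neither $b^{(2)}$ nor $b^{(3)}$, so $C_M(S)=\emptyset$ while $\cl(S)\cap B=\{b^{(1)}\}$. Your argument ``such an element together with $b^{(1)}$ would already span that whole side'' tacitly assumes the element is not parallel to $b^{(1)}$. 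This does not actually damage the main dimension count: in your framework you only \emph{use} the vanishing $(U_{s_1}\cap U_B)\cap U_{b^{(2)}}=0$ at the shared vertex, and one can always choose the sides so that neither $s_1$ nor $s_2$ is parallel to that vertex (or simply argue as the paper does, which never invokes $\cl(S)\cap B=\emptyset$). For the generalized Dowling geometries of \cref{def:matroid_N} no non-basis element is parallel to a vertex, so the issue is invisible there; but the lemma is stated for all triangle matroids, so the claim should be weakened or dropped.
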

\begin{proof}
	Set $\A\coloneqq \{A_e\coloneqq U_e\cap U_B\}_{e\in E}$ which is a weak $c$-representation of $M$ by assumption.
	Consider what happens for each value of $r(S)$:
	\begin{description}
		\item[Case 1] $r(S) = 1$. In this case, $c\cdot r(S)=\dim(U_S\cap U_B)$ since $\A$ is a weak $c$-arrangement.
		Hence $S$ is full in $\U$.
		\item[Case 2] $r(S) = 2$. If $S$ is contained in a side of the triangle say $C\subseteq B$, then $C=C_M(S)$ holds and the assumption of the lemma does not hold.

		Suppose $S$ is not contained in a side of the triangle. In that case we have $C_M(S)=\emptyset$.
		Since $S$ contains two elements lying on two different rank two flats of $M$ and the rank of $S$ is also two, we must have $|S|=2$ and we set $S \coloneqq \{s_1,s_2\}$.
		Assume $s_1$ lies on the side $C\subseteq B$ of the triangle, i.e.\ $|C|=2$, and $s_2$ does not lie on the side $C$.
		Hence, by definition of a weak $c$-arrangement we have $\dim A_{C\cup \{s_2\}}=3c$.
		This implies $A_{C\cup \{s_2\}}=A_C \oplus A_{s_2}$.
		Combined with the fact $A_{s_1}\subseteq A_C$ we obtain $\dim A_{S}=2c$.
		Therefore, using the fact that $\U$ is an extension we find
		\[
			2c \ge \dim (U_S\cap U_B) \ge \dim((U_{s_1}\cap U_B)+(U_{s_1}\cap U_B))=\dim A_{S} =2c.
		\]
		Thus, $S$ is full in $\U$.
		\item[Case 3] $r(S) = 3$. In this case $C_M(S)=B$: the closure of $S$ in $M$ is equal to the closure of the entire basis $B$ since the matroid $M$ has rank three.
	\end{description}
	Thus, the lemma holds in each case.
\end{proof}

\begin{rmrk}\label{rmrk:triangle_reg}
	In the notation of the lemma, it follows that if $x,y\in E$ are not contained in any line of the basis $B$ then $\{x,y\}$ is a full subset of $\U$, since in such a situation $r(\{x,y\})=2$.
\end{rmrk}

\begin{lemm}\label{lemm:extension}
	Let $M=(E,r)$ be a triangle matroid with distinguished basis $B$ and  $\U=\{U_e\}_{e\in E}$ be an extension of a weak $c$-representation of $M$ with respect to $B$.
	Let $S\subseteq E$ and let $C_M(S) \subseteq B$ be the subset of the basis defined in \Cref{def:triangle_matroid}.
	If $C_M(S) \neq \emptyset$ we have
	\[
	U_S\cap U_B \subseteq U_{C_M(S)}.
	\]
	In particular, $U_S\cap U_B =U_S\cap  U_{C_M(S)}$.
	
\end{lemm}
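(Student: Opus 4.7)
The plan is to apply the extension inequality $\dim(U_T\cap U_D)\le c(r(T)+r(D)-r(T\cup D))$ from \cref{def:extension} to the cleverly chosen pair $T=S\cup C_M(S)$ and $D=B$. By the defining property of $C_M(S)$ one has $r(S\cup C_M(S))=r(S)$, and since $C_M(S)\subseteq B$ the set $S\cup C_M(S)\cup B$ collapses to $S\cup B$, whose rank equals $r(B)=3$. Plugging these values into the extension inequality therefore gives
\[
\dim\bigl((U_S+U_{C_M(S)})\cap U_B\bigr)\le c\cdot r(S).
\]

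Next, I would extract $\dim U_{C_M(S)}$ from the weak-representation hypothesis. Since $\{U_e\cap U_B\}_{e\in E}$ is a weak $c$-representation of the rank-three matroid $M$, one has $\dim U_B=3c$ and $U_B=\bigoplus_{b\in B}U_b$, so $\dim U_{C_M(S)}=c\,|C_M(S)|=c\cdot r(C_M(S))=c\cdot r(S)$. Because $C_M(S)\subseteq B$, the inclusion $U_{C_M(S)}\subseteq (U_S+U_{C_M(S)})\cap U_B$ is automatic, and together with the upper bound above this forces the equality
\[
(U_S+U_{C_M(S)})\cap U_B=U_{C_M(S)}.
\]

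The main claim then follows from the chain $U_S\cap U_B\subseteq (U_S+U_{C_M(S)})\cap U_B=U_{C_M(S)}$, after which intersecting with $U_S$ gives $U_S\cap U_B\subseteq U_S\cap U_{C_M(S)}$. The reverse containment $U_S\cap U_{C_M(S)}\subseteq U_S\cap U_B$ is free because $U_{C_M(S)}\subseteq U_B$, yielding the ``in particular'' statement.

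I do not anticipate a genuine obstacle; morally this is just a dimension count that exploits the tightness already built into the extension axiom on the basis side. The only mild care needed is bookkeeping the rank identities, especially confirming $r(S\cup C_M(S)\cup B)=3$ and that $\dim U_{C_M(S)}$ attains the upper bound $c\cdot r(S)$ rather than merely being bounded by it — both of which follow cleanly from $C_M(S)$ being an independent subset of $B$ with the same closure as $S$.
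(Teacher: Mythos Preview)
Your proof is correct and follows essentially the same approach as the paper: both apply the extension inequality of \cref{def:extension} with $T=S\cup C_M(S)$ and $D=B$ to bound $\dim\bigl(U_{S\cup C_M(S)}\cap U_B\bigr)$ above by $c\cdot r(S)$, observe that $U_{C_M(S)}$ already has this dimension and is contained in the intersection, and conclude equality. The only cosmetic difference is that the paper writes the bound as $c\cdot r(S\cup C_M(S))=c\cdot r(C_M(S))=\dim U_{C_M(S)}$ directly, whereas you expand the right-hand side of the extension inequality first and then separately compute $\dim U_{C_M(S)}$.
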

\begin{proof}
	Since $\U$ is an extension of $M$ we have
	\[
		\dim (U_{S\cup C_M(S)} \cap U_B) \le c\cdot r(S\cup C_M(S))=c\cdot r(C_M(S))=\dim(U_{C_M(S)}).
	\]
	This implies $U_{C_M(S)} =U_{S\cup C_M(S)}\cap U_B$ since $U_{C_M(S)} \subseteq U_{S\cup C_M(S)}$ and $U_{C_M(S)}\subseteq U_B$ by definition.
	The last inclusion yields
	\[
		U_{C_M(S)} =U_S\cap U_B  +U_{C_M(S)},
	\]
	which directly implies the first claim. The claim $U_S\cap U_B =U_S\cap  U_{C_M(S)}$ follows trivially.
\end{proof}

\subsection{An Inflation Step}
Let $M=(E,r)$ be a triangle matroid and $\U=\left\{ U_{e}\right\} _{e\in E}$ be an extension of a weak $c$-representation of $M$ with respect to $B$.
We now describe an \emph{inflation} procedure that given a subset $S \subseteq E\setminus B$ yields a subspace arrangement $\mathcal{I}(\U,S)$ in which $S$ is full.
In this construction, we assume that any proper subset of $S$ is full in $\U$.
The procedure is split up into two steps.
\begin{description}
	\item[Step 1] We first perform an elementary inflation to inflate the subset $S$.
	We call this step $S$-\emph{inflation}.
	For cases of the form $S=\{x,y\}$, with both $x$ and $y$ lying on the same side of the triangle of $M$, this is depicted in \Cref{fig:regularization_pair}.
	
	We elementary inflate by setting $\U^1 \coloneqq \mathcal{EI}_c(\U, S, c(\vert S \vert - 1) +\de_{\U}(S) , 0)$.
	At the end of this step, we have added a $c$-dimensional subspace to each $U_s$ for $s\in S$.
	Every proper subset of $m$ of these dimension-$c$ subspaces spans a subspace of total dimension $m\cdot c$.
	However, taken all together they span a subspace of dimension $c(\vert S\vert -1) + \de_{U}(S)$, which is in general less than $c\vert S\vert $.
	\item[Step 2] As second step we perform an elementary inflation on the sum of these subspaces with respect to the basis $B$ which we call $B$-\emph{inflation}.
	Again, the case of $S$ equal to two points lying on the same side of the triangle of $M$ is depicted in~\Cref{fig:regularization_basis}.
	
 	While the previous step did not depend on $M$ being a triangle matroid, this step does:
 	Consider the subset $C_M(S)\subseteq B$.
 	\Cref{lemm:triangle_regularity} implies $\de_{\U}(S)=0$ if $C_M(S)=\emptyset$.
	
	Let $W'$ be a generic $\de_{\U}(S)$-dimensional subspace of $U^1_{C_M(S)}$ or $0$ if $C_M(S)=\emptyset$.
	Then we perform an elementary inflation by setting $\U^2\coloneqq \mathcal{EI}_c(\U^1,S, c\vert S \vert ,W')$. 
	
	At the end of this step we have added disjoint $c$-dimensional subspaces to each  $U_s$ for $s\in S$ such that $S$ is a full subset in $\U^2$.
	This will be proved in \Cref{cor:after_reg}.
	We set $\mathcal{I}(\U,S)\coloneqq \U^2$.
	
\end{description}

The next theorem describes the difference of the rank functions after both inflation steps using the results of the previous two subsections.

\begin{theorem}\label{theo:alg_reg}
	Let $\U$ be an extension of a weak $c$-representation of a triangle matroid $M = (E,r)$ with respect to a distinguished basis $B$.
	Let $S \subseteq E \setminus B$ and assume that every subset $S'\subsetneq S$ is full.
	Let $\U' = \mathcal{I}(\U, S)$ be the inflation.
	
	Then if $T\subseteq E$ is any subset disjoint from $S$ and $Z \subseteq S$, we have:
	\[
	r_{\U'}^c(T\cup Z) =
	\begin{cases}
	r^c_\U (T\cup Z) + 2\vert Z \vert, & Z\subsetneq S, \\
	r^c_\U (T\cup S \cup C_M(S)) + 2\vert S\vert - 1, & Z = S.
	\end{cases}
	\]
\end{theorem}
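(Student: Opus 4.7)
The plan is to apply Lemma~\ref{lem:inflation} twice, once to each of the two elementary inflation steps $\U \to \U^1 \to \U^2 = \mathcal{I}(\U,S)$, and to combine the resulting dimension changes. First, I verify the hypotheses of Lemma~\ref{lem:inflation} in both steps: by Lemma~\ref{lemm:defect} together with the assumption that every $S' \subsetneq S$ is full, one has $0 \le \de_\U(S) \le c$, so for the $S$-inflation $d_1 = c(|S|-1) + \de_\U(S)$ lies in $[c(|S|-1),\, c|S|]$ and $W' = 0$ trivially has dimension $\le c$; for the $B$-inflation $d_2 = c|S|$ and $\dim W' = \de_\U(S) \le c$.

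Since $T \cap S = \emptyset$, the condition $S \subseteq T \cup Z$ reduces to $Z = S$. If $Z \subsetneq S$, then Lemma~\ref{lem:inflation} applied to each step contributes $c|Z|$, so after dividing by $c$ the rank increases by $2|Z|$, which matches the first case of the formula. If $Z = S$, the $S$-inflation adds $c(|S|-1) + \de_\U(S)$ and the $B$-inflation adds $c|S| - \dim(U^1_{T \cup S} \cap W')$, so the main task becomes to evaluate this last intersection.

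Because the ambient subspace $W$ used in the $S$-inflation satisfies $W \cap V = 0$, one checks directly that $U^1_{T \cup S} \cap V = U_{T \cup S}$, and since $W' \subseteq U_{C_M(S)} \subseteq V$ this gives $U^1_{T \cup S} \cap W' = U_{T \cup S} \cap W'$. Setting $X \coloneqq U_{T \cup S} \cap U_{C_M(S)}$ and using that $W'$ is a generic $\de_\U(S)$-dimensional subspace of $U_{C_M(S)}$, a standard dimension count yields
\[
\dim(W' \cap X) = \max\bigl(0,\ \dim W' + \dim X - \dim U_{C_M(S)}\bigr).
\]
Lemma~\ref{lemm:extension} applied to $S$ shows $U_S \cap U_B = U_S \cap U_{C_M(S)}$, so $\de_\U(S) = \dim U_{C_M(S)} - \dim(U_S \cap U_{C_M(S)})$; substituting this and using the inclusion $U_S \cap U_{C_M(S)} \subseteq X$ (from $U_S \subseteq U_{T \cup S}$) collapses the right-hand side to $\dim X - \dim(U_S \cap U_{C_M(S)})$.

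To finish, inclusion--exclusion gives $\dim U_{T \cup S \cup C_M(S)} = \dim U_{T \cup S} + c\, r(C_M(S)) - \dim X$; assembling all contributions and noting $r(C_M(S)) = r(S)$ produces $\dim U'_{T \cup S} = \dim U_{T \cup S \cup C_M(S)} + c(2|S|-1)$, which is the claim after dividing by $c$. The degenerate case $C_M(S) = \emptyset$ is handled painlessly: Lemma~\ref{lemm:triangle_regularity} forces $\de_\U(S) = 0$ and $W' = 0$, and $T \cup S \cup C_M(S) = T \cup S$, so both sides collapse to the same answer. The main subtlety I expect is the uniform genericity of $W'$: it must be generic with respect to the finitely many fixed subspaces $U_{T \cup S} \cap U_{C_M(S)}$ arising as $T$ ranges over subsets of $E$, but each of these is a Zariski-open condition, so a suitable $W'$ exists over any infinite field, consistent with the paper's standing assumptions.
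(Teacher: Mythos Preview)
Your argument is correct, but the route in the case $Z=S$ is genuinely different from the paper's.

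The paper first proves the structural claim $U^2_S \supseteq U_{C_M(S)}$ and then rewrites $r^c_{\U^2}(T\cup S)$ as $r^c_{\U^2}((T\cup C_M(S))\cup(S\cup C_M(S)))$; the point of adjoining $C_M(S)$ on both sides is that, when Lemma~\ref{lem:inflation} is applied to the $B$-inflation step, the relevant intersection becomes $U^1_{S\cup C_M(S)}\cap W'=W'$ (and similarly with $T$ thrown in), since $W'\subseteq U_{C_M(S)}$. Thus the only genericity condition on $W'$ the paper ever invokes is the single, $T$-independent condition $(U_S\cap U_{C_M(S)})+W'=U_{C_M(S)}$ used in establishing the claim. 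By contrast, you apply Lemma~\ref{lem:inflation} directly to $T\cup S$ and must compute $U^1_{T\cup S}\cap W'$; your reduction $U^1_{T\cup S}\cap V=U_{T\cup S}$ (using $W\cap V=0$ from the $S$-inflation) and the generic-dimension formula for $W'\cap X$ with $X=U_{T\cup S}\cap U_{C_M(S)}$ are both fine, but this requires $W'$ to be generic with respect to each such $X$ as $T$ varies. As you note, that is a finite collection of Zariski-open conditions and is covered by the paper's standing genericity convention, so the argument goes through. The trade-off: your proof is shorter and entirely avoids the auxiliary containment claim, while the paper's device of adding $C_M(S)$ buys a cleaner genericity hypothesis on $W'$.

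For the case $Z\subsetneq S$ your direct application of Lemma~\ref{lem:inflation} to $T\cup Z$ is in fact simpler than the paper's detour through $\dim(U^2_T\cap U^2_Z)=\dim(U_T\cap U_Z)$, and yields the same conclusion.
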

	\begin{proof}
		Using the notation introduced in the definition of the inflation step, we set $\U^1 \coloneqq \mathcal{EI}_c(\U, S, c(\vert S \vert - 1) +\de_{\U}(S) , 0)$ and $\U^2\coloneqq \mathcal{EI}_c(\U^1,S, c\vert S \vert ,W')$ which means $\U^2=\U'=\mathcal{I}(\U, S)$.
		\Cref{lemm:defect} yields $ 0\le \de_{\U}(S)\le c$.
		This implies that both elementary inflations satisfy the assumptions of \Cref{lem:inflation} which we will use repeatedly in the following proof.
		
		Suppose first that $Z\subsetneq S$. \Cref{lem:inflation} yields 
		\[
			\dim(U^2_Z) - \dim(U_Z) = (\dim(U^2_Z) - \dim(U^1_Z)) +( \dim(U^1_Z)- \dim(U_Z) )=2c\vert Z \vert.
		\]
		Similarly, we obtain $\dim(U^2_T + U^2_Z) - \dim(U_T + U_Z) = 2c\vert Z \vert$.
		Thus, the fact $U^2_T=U_T$ implies $\dim(U^2_T \cap U^2_Z)=\dim(U_T \cap U_Z)$.
		Therefore, we can compute
		\begin{align*}
		r_{\U^2}(T\cup Z) & = \dim(U^2_T) + \dim(U^2_Z) - \dim(U^2_T \cap U^2_Z) \\
		& = \dim(U_T) + \dim(U_Z) + 2c|Z| - \dim(U_T \cap U_Z) \\
		& = \dim(U_T) + \dim(U_Z) + 2c|Z| - (\dim(U_T) + \dim(U_Z) - \dim(U_T \cup U_Z)) \\
		& = \dim(U_T \cup U_Z) + 2c|Z|,
		\end{align*}
		so by definition $r^c_{\U^2}(T\cup Z) = r^c_\U(T\cup Z) + 2|Z|$ as required.
		
		Now suppose $Z = S$.
		We first show the following claim
		\begin{claim}\label{claim:sec5}
			$U^2_S \supseteq U^2_{C_M(S)}$.
		\end{claim}
		\begin{proof}[Proof of \Cref{claim:sec5}]
		If $C_M(S)=\emptyset$ this claim is trivial so assume  $C_M(S)\neq\emptyset$.
		Then we have by the construction of the elementary inflation $U^2_S \supseteq U_S+W'$ where $W'$ is subspace of $U_{C_M(S)}$ chosen in the elementary inflation.
		Therefore,
		\begin{equation}\label{eq:W'}
			U^2_S \cap U^2_{C_M(S)} \supseteq (U_S+W') \cap U_{C_M(S)} = (U_S\cap U_{C_M(S)})+W',
		\end{equation}
		where the sum distributes since $W'\subseteq U_{C_M(S)} $.
		 Consider the sum $(U_S\cap U_{C_M(S)})+W'$. By \Cref{lemm:extension}, $U_S \cap U_B = U_S \cap U_{C_M(S)}$.
		 Thus, 
		 \[
		 	\dim(U_S \cap U_{C_M(S)}) = \dim(U_S \cap U_B) = c\cdot r(S) - \de_{\U}(S) = \dim(U_{C_M(S)})  - \de_{\U}(S).
		 \] 
		 Since $W'$ is a $\de_{\U}(S)$-dimensional generic subspace of $U_{C_M(S)}$ we obtain $\dim((U_S\cap U_{C_M(S)})+W') = c\cdot r(C_M(S))$. The summands are each contained in $U_{C_M(S)}$ and the dimension of the sum is $\dim(U_{C_M(S)})$.
		 We obtain $U_{C_M(S)} = (U_S\cap U_{C_M(S)})+W'$.
		 Thus \Cref{eq:W'} yields $U^2_S \supseteq U^2_{C_M(S)}$ in $\U^2$.
		\end{proof}
		Using \Cref{claim:sec5}, we may write 
		\[
			r_{\U^2}^c(T\cup Z) = r_{\U^2}^c((T\cup C_M(S))\cup (S\cup C_M(S))).
		\]
		Therefore, we obtain 
		\begin{equation}\label{eq:ru1}
			r_{\U^2}(T\cup Z) = \dim(U^2_T + U^2_{C_M(S)}) + \dim (U^2_S + U^2_{C_M(S)}) - \dim((U^2_T + U^2_{C_M(S)}) \cap (U^2_S+ U^2_{C_M(S)})).
		\end{equation}
		By construction, we have $ U^2_T + U^2_{C_M(S)} = U_T + U_{C_M(S)}$.
		Using \Cref{lem:inflation} and the exact definition of the elementary inflation, we can compute
		\begin{align}
			\dim (U^2_S + U^2_{C_M(S)}) &=\dim (U^1_S + U^1_{C_M(S)} + c|S|-\de_{\U}(S) )\nonumber\\
			&= \dim (U_S + U_{C_M(S)}) + c(|S|-1) + \de_{\U}(S)+ c|S|-\de_{\U}(S)\nonumber\\
			&= \dim (U_S + U_{C_M(S)}) + c(2|S|-1). \label{eq:S} 
		\end{align}
		By construction, we have $ U^2_T + U^2_{C_M(S)} = U_T + U_{C_M(S)}$.
		Thus, we can compute using \Cref{eq:S} and its analogous form for $U^2_T + U^2_{C_M(S)} + U^2_S+ U^2_{C_M(S)}$
		\begin{align}
		&\dim((U^2_T + U^2_{C_M(S)}) \cap (U^2_S+ U^2_{C_M(S)})) 		\nonumber\\
		= &\dim(U^2_T + U^2_{C_M(S)}) +\dim (U^2_S+ U^2_{C_M(S)}) -\dim(U^2_T + U^2_{C_M(S)} + U^2_S+ U^2_{C_M(S)}) \nonumber\\
		=&\dim(U_T + U_{C_M(S)}) + \dim(U_S + U_{C_M(S)})+ c(2|S|-1) \nonumber\\
		 &- \dim(U_T +U_{C_M(S)}+ U_S + U_{C_M(S)})-c(2|S|-1)\label{eq:ru2} \nonumber\\
		 =&\dim(U_T + U_{C_M(S)}) + \dim(U_S + U_{C_M(S)}) - \dim(U_T + U_S + U_{C_M(S)}).\nonumber
		\end{align}
		This fact, combined with \Cref{eq:ru1,eq:S} implies
		\begin{align*}
		r_{\U^2}(T\cup Z)  =	&  \dim(U_T + U_{C_M(S)}) + \dim(U_S + U_{C_M(S)}) + c(2|S|-1) \\
		&-( \dim(U_T + U_{C_M(S)}) + \dim(U_S + U_{C_M(S)}) - \dim(U_T + U_S + U_{C_M(S)})) \\
		& = \dim(U_T + U_S + U_{C_M(S)}) + c(2|S| - 1).
		\end{align*}
		This implies $r_{\U^2}^c (T\cup Z) = r^c_U(T \cup S \cup {C_M(S)}) + 2|S| - 1$, as claimed.
	\end{proof}

The last theorem enables us to prove that $S$ is full in the inflation $\mathcal{I}(\U, S)$.
\begin{coro}\label{cor:after_reg}
	Let $\U$ be an extension of a weak $c$-representation of a triangle matroid $M = (E,r)$ with respect to a distinguished basis $B$.
	Let $S \subseteq E \setminus B$ be a subset such that every $S'\subsetneq S$ is full and let $\U' = \mathcal{I}(\U, S)$ be the inflation.
	Then $\U'$ is an extension of $M$ and $S$ is full in $\U'$.
\end{coro}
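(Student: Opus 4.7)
Two things must be checked: that $\U'$ is an extension of the weak $c$-representation of $M$ in the sense of \Cref{def:extension}, and that $S$ is full in $\U'$. All computations will rest on the single trick of writing intersections as $\dim(A\cap B)=\dim A+\dim B-\dim(A+B)$ and plugging in the dimensions predicted by \Cref{theo:alg_reg}. Two general observations organize the work: first, $S\cap B=\emptyset$ implies $U'_D=U_D$ for every $D\subseteq B$, so only the ``$X$ side'' changes; second, in all rank formulas of \Cref{theo:alg_reg} the additive terms $2c|Z|$ or $c(2|S|-1)$ appear both for $X$ and for $X\cup D$ and therefore cancel when one expresses intersections using the formula above.

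\textbf{Step 1: fullness of $S$.} To compute $\dim(U'_S\cap U'_B)$, apply \Cref{theo:alg_reg} with $T=\emptyset,Z=S$ to get $\dim U'_S=\dim U_{S\cup C_M(S)}+c(2|S|-1)$, and with $T=B,Z=S$ (using $C_M(S)\subseteq B$) to get $\dim(U'_S+U_B)=\dim(U_S+U_B)+c(2|S|-1)$. Subtracting yields $\dim(U'_S\cap U_B)=\dim(U_{S\cup C_M(S)}\cap U_B)$. If $C_M(S)=\emptyset$, then \Cref{lemm:triangle_regularity} gives $\de_\U(S)=0$ and the claim is immediate; otherwise, $U_{C_M(S)}\subseteq U_B$ allows applying the modular law to get $(U_S+U_{C_M(S)})\cap U_B=(U_S\cap U_B)+U_{C_M(S)}$, and \Cref{lemm:extension} supplies $U_S\cap U_B\subseteq U_{C_M(S)}$, collapsing the right-hand side to $U_{C_M(S)}$, whose dimension is $c\cdot r(C_M(S))=c\cdot r(S)$. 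Hence $\de_{\U'}(S)=0$.

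\textbf{Step 2: $\U'$ is an extension.} To show $\{U'_e\cap U'_B\}_{e\in E}$ is a weak $c$-representation of $M$, it suffices to prove $U'_e\cap U_B=U_e\cap U_B$ for each $e\in S$ (the rest is unchanged). This follows from exactly the same cancellation argument as in Step~1 applied to $Z=\{e\}$, using the $Z\subsetneq S$ branch of \Cref{theo:alg_reg} if $|S|\ge 2$ and the $Z=S$ branch if $|S|=1$ (noting $C_M(\{e\})=\emptyset$ for $e\in E\setminus B$). In either subcase the dimension of $U'_e\cap U_B$ equals $\dim(U_e\cap U_B)=c$, and since $U_e\cap U_B\subseteq U'_e\cap U_B$ we get equality. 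It remains to verify the extension inequality $\dim(U'_X\cap U'_D)\le c(r(X)+r(D)-r(X\cup D))$ for all $X\subseteq E,D\subseteq B$. Set $Z=X\cap S$. If $Z\subsetneq S$, then both $X$ and $X\cup D$ meet $S$ in $Z$, so the $+2c|Z|$ terms from \Cref{theo:alg_reg} cancel and $\dim(U'_X\cap U_D)=\dim(U_X\cap U_D)$, to which the extension inequality for $\U$ applies directly. If $Z=S$, the $c(2|S|-1)$ terms cancel, yielding $\dim(U'_X\cap U_D)=\dim(U_{X\cup C_M(S)}\cap U_D)$; the extension property of $\U$ bounds this by $c(r(X\cup C_M(S))+r(D)-r(X\cup D\cup C_M(S)))$, which equals $c(r(X)+r(D)-r(X\cup D))$ because $C_M(S)$ lies in the matroid closure of $S\subseteq X$.

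\textbf{Main obstacle.} The only nontrivial matroidal input is the observation in the last case, namely that adjoining $C_M(S)$ to any superset of $S$ does not alter the matroid rank; without this, the extension inequality for $\U'$ would be strictly weaker than the one we need. Beyond this, the proof is pure bookkeeping with \Cref{theo:alg_reg}, organized around the cancellation of additive terms between $X$ and $X\cup D$.
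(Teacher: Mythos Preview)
Your proof is correct and follows essentially the same route as the paper: both arguments verify the extension inequality by using \Cref{theo:alg_reg} on $X$ and on $X\cup D$, splitting into the cases $S\subseteq X$ and $S\not\subseteq X$, and then invoking $r(X\cup C_M(S))=r(X)$ in the former case. The paper's treatment of fullness is slightly different---it goes back to \Cref{lem:inflation} and the explicit construction to compute $\dim(U'_S)$---whereas you extract everything uniformly from \Cref{theo:alg_reg}, which is arguably cleaner; similarly, the paper simply asserts $U'_e\cap U'_B=U_e\cap U_B$ ``by construction of the inflation'' while you deduce it from the rank formula.

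One small imprecision: your parenthetical ``noting $C_M(\{e\})=\emptyset$ for $e\in E\setminus B$'' is not literally true for an arbitrary triangle matroid (an element could be parallel to a basis vertex). It doesn't actually matter---if $C_M(\{e\})=\{b\}$ then $U_e\cap U_B=A_e=A_b=U_b$ and the same modular-law computation you use in Step~1 gives $\dim(U_{\{e,b\}}\cap U_B)=\dim(U_e\cap U_B)$ anyway---but the remark as stated is inaccurate.
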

\begin{proof}
	Since $U'_e\cap U'_B=U_e\cap U_B$ for any $e\in E$ by construction of the inflation, the subspace arrangement obtained by intersecting $\U'$ with $U'_B$ is a weak $c$-representation of $M$ since $\U$ is an extension of $M$ by assumption.
	
	Second, let $T\subseteq E$ and $D\subseteq B$.
	We need to show 
	\begin{equation}\label{eq:extension}
		\dim (U'_T\cap U'_D) \le c(r(T)+r(D)-r(T\cup D)).
	\end{equation}
	Suppose $S\not\subseteq T$. 
	Then \Cref{theo:alg_reg} combined with the fact $U'_D=U_D$ and the usual dimension formula implies $\dim (U'_T\cap U'_D)=\dim (U_T\cap U_D)$.
	Therefore, \Cref{eq:extension} holds in this case due to the analogous statement for the extension $\U$.
	
	Now suppose $S\subseteq T$.
	Using as above \Cref{theo:alg_reg} and the extension property on $\U$ yields 
	\begin{align*}
		\dim (U'_T\cap U'_D) &=\dim (U_{T\cup C_M(S)}\cap U_D) \\ 
		& \le c(r(T\cup C_M(S)) + r(D) - r(T\cup C_M(S)\cup D) )\\
		&=c(r(T)+r(D)-r(T\cup D)),
	\end{align*}
	where we used in the last equality $r(S)=r(S\cup C_M(S))$ by the definition of $C_M(S)$.
	
	Lastly, we prove that $S$ is full in $\U'$.
	Using the notation as in the construction of the inflation, \Cref{lem:inflation} implies 
	\[
		\dim(U'_S)= \dim(U_S) + c(|S|-1) +\de_{\U}(S) +c|S|  -\dim (W'\cap U_S).
	\]
	Since $W'$ is a generic subspace of $U_{C_M(S)}$ of dimension $\de_{\U}(S)$ which equals $\dim(U_{C_M(S)}) -\dim (U_S\cap U_B)$ we obtain $\dim (W'\cap U_S)=0$.
	Therefore, we compute as in the proof of \Cref{theo:alg_reg}:
	\begin{align*}
		\dim (U'_S\cap U'_B) & = \dim(U'_S) + \dim(U'_B) - \dim (U'_S+U'_B)\\
		& = \dim(U_S)  +\de_{\U}(S) + \dim(U_B) - \dim (U_S+U_B) \\
		&=\dim (U_S\cap U_B)  +\de_{\U}(S) =c\cdot r(S)\qedhere
	\end{align*}
\end{proof}

\section{Combinatorial Inflation}\label{sec:comb_reg}
This section describes a combinatorial inflation procedure for polymatroids which mirrors the algebraic one described in the previous section.
\begin{defn}\label{def:partial_polymatroid}
	Let $M=(E,r)$ be a rank three matroid with a distinguished basis $B$.
	We call a polymatroid $g$ defined on $E$ an \emph{extension} of $M$ if for all $C\subseteq B$ and $S\subseteq E$ it satisfies
	\begin{equation}\label{eq:partial}
	g(C)+g(S)-g(S\cup C)=r(C)+r(S)-r(S\cup C).\tag{*}
	\end{equation}
\end{defn}
The condition in \Cref{eq:partial} reflects the condition of an subspace arrangement extension given in \Cref{def:extension}.
It ensures that subspace arrangements representing $g$ are weak $c$-representations of $M$ when intersected with the subspace corresponding to $B$
(this statement will be proved in \Cref{thm:intersection_thm}).
Note that for any $C\subseteq B$ applying Equation $\eqref{eq:partial}$ with $S=C$ implies $g(C)=r(C)$.

We define a combinatorial inflation operation on the family of all extension polymatroids $g:\mathcal{P}(E)\rightarrow\R_{\ge 0}$
over a triangle matroid $M=(E,r)$ with a distinguished basis $B$.
This mirrors the algebraic inflation construction - compare \Cref{theo:alg_reg}. Further comparison of these constructions is carried out in the next section.
\begin{defn}\label{def:comb_reg}
	Given $g:\mathcal{P}(E)\rightarrow \R_{\ge 0}$ which is an extension of a triangle matroid $M$ with distinguished basis $B$, together with a subset $S\subseteq E \setminus B$, we define the inflated polymatroid $g'$ as follows:
	let $T \subseteq E$ be any subset disjoint from $S$, and let $Z \subseteq S$.	
	Then we define
	\[
	g'(T\cup Z) \coloneqq \begin{cases}
	g(T\cup Z) + 2|Z|, & Z\subsetneq S,\\
	g(T \cup S\cup C_M(S)) + 2|S| - 1, & Z = S.
	\end{cases}
	\]
The rank function $g'$ resulting from this construction, applied to $g$ and the subset $S$, will be denoted by $\mathcal{I}_\text{comb}(g,S).$
\end{defn}
\begin{prop}\label{prop:comb_reg}
	Let $g$ be a polymatroid extending a matroid $M=(E,r)$ with respect to the distinguished basis $B$ and let $S\subseteq E \setminus B$.
	Then $g' = \mathcal{I}_\text{comb}(g,S)$ also extends $M$ with respect to $B$.
\end{prop}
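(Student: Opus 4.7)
The plan is to verify both that $g'$ satisfies the polymatroid axioms and that it satisfies the defining identity \eqref{eq:partial} of an extension. Most of the work is organized around a single preliminary identity: for every $T\subseteq E$ with $S\subseteq T$,
\[
g(T\cup C_M(S)) = g(T).
\]
I would prove this by applying \eqref{eq:partial} to $g$ with the pair $(C_M(S),T)$: since $C_M(S)$ lies in the closure of $S$ in $M$ and $S\subseteq T$, we have $r(T\cup C_M(S))=r(T)$, and applying \eqref{eq:partial} once more with $C=S'=C_M(S)\subseteq B$ yields $g(C_M(S))=r(C_M(S))$. Substituting both facts into \eqref{eq:partial} gives the identity. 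This is the combinatorial counterpart of the algebraic fact, used in \Cref{cor:after_reg}, that the second step of the inflation adds dimensions inside $U_{C_M(S)}$.

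To verify the extension identity for $g'$, I would fix $D\subseteq B$ and $T'\subseteq E$, and set $Z=T'\cap S$. Since $D\cap S=\emptyset$, also $(T'\cup D)\cap S=Z$, and $g'(D)=g(D)$. If $Z\subsetneq S$, the values $g'(T')$ and $g'(T'\cup D)$ differ from $g(T')$ and $g(T'\cup D)$ by the same additive constant $2|Z|$; the constants cancel, reducing the claim to \eqref{eq:partial} for $g$. If $Z=S$, both $g'(T')$ and $g'(T'\cup D)$ carry the offset $2|S|-1$ with $g(\cdot)$ replaced by $g(\cdot\cup C_M(S))$. Applying \eqref{eq:partial} to $g$ with the pair $(D,\, T'\cup C_M(S))$, and using that $S\subseteq T'$ so that $C_M(S)$ is in the closures of both $T'$ and $T'\cup D$, reduces the claim to \eqref{eq:partial} for $g$ again.

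What remains is checking that $g'$ is a polymatroid. Non-negativity and monotonicity are case analyses (for monotonicity, the only nontrivial transition is $Z_1\subsetneq S=Z_2$, where the slack $2(|S|-|Z_1|)-1\ge 1$ outweighs the change in $g$). The hard part will be submodularity. Writing $T_i'=T_i\cup Z_i$ with $T_i\cap S=\emptyset$ and $Z_i\subseteq S$, I would split into four cases according to whether each $Z_i$ equals $S$. The cases with $Z_1\cup Z_2\subsetneq S$ or $Z_1=Z_2=S$ reduce to submodularity of $g$ using $|Z_1|+|Z_2|=|Z_1\cap Z_2|+|Z_1\cup Z_2|$. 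The delicate case is $Z_1,Z_2\subsetneq S$ with $Z_1\cup Z_2=S$, where the asymmetric offset $-1$ from the $Z=S$ formula appears on only one side of the inequality; the preliminary identity then collapses $g(T_1\cup T_2\cup S\cup C_M(S))$ to $g(T_1\cup T_2\cup S)$, and the resulting inequality is exactly submodularity of $g$ with a surplus of $1$. The mixed case $Z_1=S$, $Z_2\subsetneq S$ is handled similarly, invoking monotonicity of $g$ to absorb the extra term $C_M(S)\cap T_2$ appearing in the intersection on the right-hand side of the submodularity inequality for $g$.
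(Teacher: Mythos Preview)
Your proof is correct, and it shares its opening move with the paper: both first establish that $g(T\cup C_M(S))=g(T)$ whenever $S\subseteq T$, which lets one rewrite the definition of $g'$ uniformly as
\[
g'(T\cup Z)=g(T\cup Z)+\begin{cases}2|Z|,&Z\subsetneq S,\\2|S|-1,&Z=S.\end{cases}
\]
Your verification of the extension identity \eqref{eq:partial} for $g'$ then matches the paper's almost word for word: in both cases $Z\subsetneq S$ and $Z=S$ the additive offsets on $g'(T')$ and $g'(T'\cup D)$ coincide and cancel, reducing to \eqref{eq:partial} for $g$.

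The genuine difference is in how you show $g'$ is a polymatroid. You carry out a direct four-case verification of submodularity (and a transition analysis for monotonicity). The paper avoids all of this with a single observation: once $g'$ has been rewritten as above, the function $Z\mapsto g'(T\cup Z)-g(T\cup Z)$ depends only on $Z\subseteq S$ and equals $|Z|+\min(|Z|,|S|-1)$, which is the rank function of the direct sum of the free matroid on $S$ with the uniform matroid $U_{|S|-1,|S|}$. Hence $g'$ is the sum of two polymatroids and is therefore a polymatroid. This buys a one-line proof in place of your case analysis. Your approach, on the other hand, is more self-contained and would adapt more readily to situations where the offset function is not itself a matroid rank function. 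A minor remark: in your mixed case $Z_1=S$, $Z_2\subsetneq S$, once you have applied the preliminary identity the term $C_M(S)$ disappears entirely and submodularity of $g$ applies directly, so the appeal to monotonicity is not actually needed there.
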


\begin{proof}
	Let $T\subseteq E$ be any subset disjoint from $S$. 
	Since $g$ is a polymatroid extending $M$, we obtain by \Cref{eq:partial} that
	\[
	g(S) + g(C_M(S)) - g(S\cup C_M(S)) = r(S) + r(C_M(S)) - r(S\cup C_M(S)).
	\]
	Together with $g(C_M(S)) = r(C_M(S))$ and $r(S)=r(S\cup C_M(S))$, this equation implies $g(S) = g(S\cup C_M(S))$.
	The fact that $g$ is a polymatroid yields $g(T \cup S \cup C_M(S)) = g(T \cup S)$.
	Thus, we may rewrite the definition of $g'$ by
	
	\[g'(T\cup Z) \coloneqq \begin{cases}
	g(T\cup Z) + 2|Z|, & Z\subsetneq S,\\
	g(T \cup Z) + 2|S| - 1, & Z = S.
	\end{cases}
	\]
	
	This is a polymatroid, since it is the sum of $g$ with the rank function of a matroid on $S$, namely the sum of a free rank function with a uniform rank function of rank $|S|-1$.
	
	It remains to demonstrate that $g'$ also satisfies \Cref{eq:partial}.
	Let $T'\subseteq E$ and $D\subseteq B$.
	The following equalities hold by definition of $g'$:
	\begin{align*}
	g'(T') &=  \begin{cases} g(T') + 2|T'\cap S|, &  S\not \subseteq T', \\
															 g(T') + 2|T'\cap S| - 1, & S\subseteq T' ,
														\end{cases}  \\
	g'(T'\cup D) &= 						  \begin{cases} g(T' \cup D) + 2|T'\cap S|, &  S\not \subseteq T',\\
																	g(T'\cup D) + 2|T'\cap S| - 1, & S\subseteq T' .
														\end{cases}
	\end{align*}
	Thus, it is always true that $g'(T') - g'(T'\cup D) = g(T') - g(T'\cup D)$.
	Using $g'(D)=g(D)$, we obtain in total
	\[
	g'(D)  + g'(T')  - g'(T'\cup D) = g(D) +g(T') - g(T'\cup D). \]
	We conclude $g'$ satisfies \Cref{eq:partial}, since $g$ does.
\end{proof}

\section{Compatibility of Algebraic and Combinatorial Inflation}\label{sec:comp}
This section proves two theorems which relate the algebraic and combinatorial inflation procedures introduced in the last two sections.
We start by giving a theorem that establishes a connection between weak $c$-representations and combinatorial polymatroid extensions.

\begin{theorem}\label{thm:intersection_thm}
	Let $M=(E,r)$ be a matroid with a distinguished basis $B$ and let $g:\mathcal{P}(E)\rightarrow\R_{\ge 0}$ be a polymatroid extending $M$
	(i.e. $g$ satisfies Equation~\eqref{eq:partial} for all appropriate subsets).
	Suppose $\U=\left\{ U_{e}\right\} _{e\in E}$
	represents $c\cdot g$, that is $r_{\U}=c\cdot g$.
	
	Denote $ A_{e}= U_{e}\cap U_B$ for each $e \in E$. Then the arrangement $\mathcal{A}=\{ A_{e}\} _{e\in E}$
	is a weak $c$-representation of $M$ with respect to $B$.
\end{theorem}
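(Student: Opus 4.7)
The plan is to verify the three defining conditions of a weak $c$-representation (\Cref{def:weak}(b)): that $\A$ is $c$-homogeneous, that $r_{\A}^c(X) \le r(X)$ for every $X\subseteq E$, and that equality $r_{\A}^c(Y)=r(Y)$ holds whenever $|Y\setminus B|\le 1$. Every computation will be driven by property~\eqref{eq:partial} of the extension $g$, combined with the dimension formula $\dim(V_1\cap V_2)=\dim V_1+\dim V_2-\dim(V_1+V_2)$ applied to the $U$-subspaces.

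For $c$-homogeneity, specializing \eqref{eq:partial} to $S=C=\{e\}$ gives $g(\{e\})=r(\{e\})=1$ for every $e\in E$. When $e\in B$ one has $U_e\subseteq U_B$, so $A_e=U_e$ has dimension $c\cdot g(\{e\})=c$. When $e\notin B$, I apply \eqref{eq:partial} with $S=\{e\}$ and $C=B$; since $r(B)=r(B\cup\{e\})=3$ and $r(\{e\})=1$, the right-hand side equals $1$, and the dimension formula gives
\[\dim A_e=\dim(U_e\cap U_B)=c\bigl(g(\{e\})+g(B)-g(B\cup\{e\})\bigr)=c.\]

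For the inequality $r_{\A}^c(X)\le r(X)$, I observe that $A_X=\sum_{e\in X}(U_e\cap U_B)\subseteq U_X\cap U_B$, so it suffices to bound the latter. Applying \eqref{eq:partial} with $S=X$ and $C=B$ and using $r(X\cup B)=r(B)=3$ yields $g(X)+g(B)-g(X\cup B)=r(X)$, so the dimension formula gives $\dim(U_X\cap U_B)=c\cdot r(X)$, as desired.

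For the equality when $|Y\setminus B|\le 1$, the case $Y\subseteq B$ is immediate: since $A_b=U_b$ for $b\in B$, we have $A_Y=U_Y$ of dimension $c\cdot g(Y)=c\cdot r(Y)$, the last equality from \eqref{eq:partial} with $S=C=Y$. In the remaining case $Y=Y_0\cup\{e\}$ with $Y_0\subseteq B$ and $e\notin B$, the key observation is that $U_{Y_0}\cap A_e=U_{Y_0}\cap U_e$, because $U_{Y_0}\subseteq U_B$. Applying \eqref{eq:partial} with $S=\{e\}$ and $C=Y_0$ gives $g(Y_0)+g(\{e\})-g(Y_0\cup\{e\})=r(Y_0)+1-r(Y_0\cup\{e\})$, and combining this with the dimension formula twice collapses to $\dim A_Y=c\cdot r(Y_0\cup\{e\})=c\cdot r(Y)$. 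No serious obstacle arises: the content of the theorem is that property \eqref{eq:partial}, invoked only with $C\subseteq B$ as the hypothesis on $g$ permits, is exactly strong enough to force the intersection arrangement $\A$ to reproduce the matroid rank on subsets with at most one non-basis element.
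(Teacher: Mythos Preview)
Your proof is correct and follows essentially the same route as the paper's: the key observations---that $A_X\subseteq U_X\cap U_B$ gives the inequality via \eqref{eq:partial} with $C=B$, and that $U_{Y_0}\cap A_e=U_{Y_0}\cap U_e$ for $Y_0\subseteq B$ gives the equality case---are identical, just organized around the three clauses of \Cref{def:weak}(b) rather than around set types as in the paper. Two cosmetic slips: the sentence ``specializing \eqref{eq:partial} to $S=C=\{e\}$'' is only valid for $e\in B$ (since \eqref{eq:partial} requires $C\subseteq B$), though you don't actually use it for $e\notin B$; and writing $r(B)=3$ is unnecessarily specific, since the theorem is stated for arbitrary rank and only $r(X\cup B)=r(B)$ is needed.
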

\begin{rmrk}
	The proof is just an application of \eqref{eq:partial} and basic linear algebra. The fact that it works is what justifies the definition of polymatroid extensions.
\end{rmrk}
\begin{proof}
	By definition, for any $C\subseteq B$ we have $g(C)=r(C)$ and $U_{C}=A_{C}$.
	Thus ${r_{\A}^{c}{\big|_B}	=r{\big|_B}	}$. 
	
	Let $e\in E\setminus B$ and $C\subseteq B$.
	Then, denoting $S\coloneqq C\cup\left\{ e\right\} $ we obtain
	\begin{align}
	\nonumber \dim A_{S}=&\dim A_{C}+\dim A_{e}-\dim(A_{C}\cap A_{e})\\
	&\dim A_{C}+\dim(U_{e}\cap U_{B})-\dim(U_{C}\cap(U_{e}\cap U_{B})).\label{eq:intersection}
	\end{align}
	Note that $U_{C}\cap U_{e}\cap U_{B}=U_{C}\cap U_{e}$ since $C\subseteq B$.
	Using \Cref{eq:intersection} together with  the dimension formula and the identities $\dim U_{T}=c\cdot r_{\mathcal{U}}^{c}(T)=c\cdot g(T)$
	for any subset $T\subseteq E$, we obtain as required
	\begin{align*}
	\frac{1}{c}\dim A_{S}&=r(C)+\left[g(B)+g(\{e\})-g(B\cup \{e\})\right]-\left[g(C)+g(\{e\})-g(C\cup \{e\})\right]\\
	&\stackrel{\eqref{eq:partial}}{=}r(C)+ \left[r(B)+r(\{e\})-r(B\cup \{e\})\right] -\left[ r(C)+r(\{e\})-r(C\cup \{e\})\right]\\
	&=r(C\cup \{e\}).
	\end{align*}
	
	Suppose now that $S\subseteq E$ is a general subset. Then 
	\begin{align*}
	r_{\A}(S) &=\dim(A_S) =\dim\left(\sum_{x\in S}(U_{x}\cap U_{B})\right)\\
	& \le\dim\left(U_S\cap U_{B}\right)\\
	&=\dim U_{S}+\dim U_{B}-\dim U_{S\cup B}\\
	&= c(g(S)+g(B)-g(S\cup B))\\
	&\stackrel{\eqref{eq:partial}}{=}c(r(S)+r(B)-r(S\cup B))=c\cdot r(S).
	\end{align*}
	Therefore, $\A$ is a weak $c$-representation of $M$ as claimed.
\end{proof}
\begin{rmrk}
	In Equation~\eqref{eq:intersection} we needed the fact that $e$ is a single element of	the ground set:
	For a general $S\subset E$ the subspace $A_{S}=\sum_{e\in S}(U_{e}\cap U_{B})$
	may not equal $U_{S}\cap U_{B}$.
\end{rmrk}

\begin{prop}\label{prop:contraction_formula}
	Let $M=(E,r)$ be a triangle matroid with distinguished basis $B$. Let $D\subseteq B$, $S\subseteq E$.
	Let $C_M(S)\subseteq B$ as defined in \Cref{def:triangle_matroid}.
	Any polymatroid $g:\mathcal{P}(E)\rightarrow \mathbb{R}_{\ge 0}$ extending $M$ satisfies 
	\[g(D\cup S\cup C_M(S)) = g(S\cup B) - g(B) + r(D\cup S).\]
	Similarly, any subspace arrangement $\U=\{U_e\}_{e\in E}$ extending a weak $c$-representation of $M$ satisfies 
	\[r^c_{\U}(D\cup S\cup C_M(S)) = r^c_{\U}(S\cup B) - r^c_{\U}(B) + r(D\cup S).\]
\end{prop}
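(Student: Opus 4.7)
The plan is to prove both halves of Proposition~\ref{prop:contraction_formula} in parallel. The polymatroid identity will follow from two applications of the defining equation~\eqref{eq:partial}; the subspace arrangement identity will follow from an analogous computation using the dimension formula together with Lemma~\ref{lemm:extension}.

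For the polymatroid version, I would first apply~\eqref{eq:partial} with $C = B$: since $r(B) = r(B\cup S) = 3$, the right-hand side collapses to $r(S)$, giving
\[
g(B\cup S) - g(B) = g(S) - r(S).
\]
Next, apply~\eqref{eq:partial} with $C = D\cup C_M(S)$. This subset lies in $B$, so $g(D\cup C_M(S)) = r(D\cup C_M(S))$; and since $C_M(S)\subseteq\overline{S}$ (vacuously if $C_M(S)=\emptyset$), we have $r(D\cup S\cup C_M(S)) = r(D\cup S)$. Rearranging yields
\[
g(D\cup S\cup C_M(S)) = g(S) - r(S) + r(D\cup S).
\]
Comparing the two identities gives the claim. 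When $C_M(S) = \emptyset$ the same argument works with $D$ in place of $D\cup C_M(S)$.

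For the subspace arrangement version, the strategy is to mirror this computation with dimensions replacing $g$. When $C_M(S)\neq\emptyset$, Lemma~\ref{lemm:extension} gives $U_S\cap U_B = U_S\cap U_{C_M(S)}$; combining this with $D\cup C_M(S)\subseteq B$ forces $U_S\cap U_{D\cup C_M(S)} = U_S\cap U_B$. The dimension formula applied to both $U_{D\cup S\cup C_M(S)} = U_{D\cup C_M(S)} + U_S$ and $U_{S\cup B} = U_B + U_S$, and subtracted, telescopes to
\[
\dim U_{D\cup S\cup C_M(S)} - \dim U_{S\cup B} = \dim U_{D\cup C_M(S)} - \dim U_B.
\]
Since $D\cup C_M(S)\subseteq B$ and $\{U_e\cap U_B\}_{e\in E}$ is a weak $c$-representation, both terms on the right are $c$ times their matroidal ranks; using $r(D\cup C_M(S)) = r(D\cup S)$ and $r(B) = r^c_\U(B) = 3$, dividing by $c$ yields exactly the desired formula.

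The main obstacle is the case $C_M(S) = \emptyset$ in the subspace arrangement version, where Lemma~\ref{lemm:extension} does not apply. Here one needs the direct identity $\dim(U_B\cap U_S) - \dim(U_D\cap U_S) = c(r(D\cup S) - r(D))$, which is proved by combining the extension inequality for $U_D\cap U_S$ with the tightness of the weak $c$-representation on subsets $D\cup\{x\}$, $x\in S\setminus B$ (so that $A_x$ is forced into the appropriate $U_{D'}$); this supplies enough equalities in the extension bounds to push the calculation through.
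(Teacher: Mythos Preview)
Your approach is essentially the same as the paper's, with two small differences worth noting.

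For the polymatroid half, the paper applies~\eqref{eq:partial} only once, taking $C=B$ and replacing $S$ by $D\cup S\cup C_M(S)$; since $D,C_M(S)\subseteq B$, the union with $B$ is just $S\cup B$, and the right side collapses immediately to $r(D\cup S)$. Your two-step version (first $C=B$, then $C=D\cup C_M(S)$) is correct but slightly roundabout.

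For the subspace half with $C_M(S)\neq\emptyset$, your argument matches the paper's. For $C_M(S)=\emptyset$, the identity you state is correct, but your justification is vague. The clean route, which the paper takes, is to invoke \Cref{lemm:triangle_regularity}: $C_M(S)=\emptyset$ implies $S$ is full, so $\dim(U_S\cap U_B)=c\cdot r(S)$. Then the extension inequality~\eqref{eq:reg} applied once to $(T,D)=(S,D)$ and once to $(T,D)=(D\cup S,B)$ squeezes
\[
c\cdot r(D\cup S)\;\le\;\dim\bigl(U_D+(U_S\cap U_B)\bigr)\;\le\;\dim\bigl(U_{D\cup S}\cap U_B\bigr)\;\le\;c\cdot r(D\cup S),
\]
using the modular law $U_D+(U_S\cap U_B)=(U_D+U_S)\cap U_B$. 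Your hint about ``tightness on $D\cup\{x\}$'' and ``$A_x$ forced into $U_{D'}$'' points toward an element-by-element argument that would essentially re-prove \Cref{lemm:triangle_regularity}; it is not wrong, but invoking that lemma directly is cleaner and is what the paper does.
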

\begin{rmrk}
	The point of this is the following: let $r^c_{\U}$ be the rank function of an arrangement as above, and suppose it has been obtained from a weak $c$-representation $\A$ of $M$ by a sequence of inflations. Let $r^c_{\U'}$ be the function obtained by inflating at an additional subset $S$. Then $r^c_{\U'}(D\cup S)$ can be expressed in terms of $r^c_{\U}(D\cup S\cup C_M(S))$ by~\Cref{theo:alg_reg}.
	
	The formula above thus expresses $r^c_{\U'}(D\cup S)$ in terms of two simpler objects, namely $r$ and the rank function
	\[S\mapsto r^c_{\U}(S\cup B) - r^c_{\U}(B)\] 
	obtained by contracting $B$ (or quotienting out $U_B$). 
	
	This quotient is easier to handle than $r^c_{\U}$: its rank function does not depend on $\A$, but only on $M$ and the sequence of inflations we applied. We will see this explicitly later in this section.
\end{rmrk}
\begin{proof}[Proof of \Cref{prop:contraction_formula}]
	By \Cref{eq:partial} applied to $D\cup S\cup C_M(S)$ and $B$ we have 
	\[
		g(D\cup S\cup C_M(S)) + g(B) - g(S\cup B) = r(D\cup S\cup C_M(S)) + r(B) - r(S\cup B).
	\]
	Since $B$ is a basis for $M$, we have $r(B) = r(S\cup B)$ and the last two terms on the right cancel.
	Similarly, by definition of $C_M(S)$ we have $r(D\cup S\cup C_M(S)) = r(D\cup S)$.
	By the discussion after \Cref{def:partial_polymatroid} we see $g(B) = r(B)$.
	Substituting, we obtain \[g(D\cup S\cup C_M(S)) + g(B) - g(S\cup B) = r(D\cup S),\] and rearranging yields the required equation on $g$.
	
	For the claim on $\U$ we will work with dimensions rather than the function $r_\U$, since the latter provides no mechanism for considering intersections of subspaces. We have
	\[\dim(U_{D\cup S\cup C_M(S)}) + \dim(U_B) = \dim(U_{D\cup S\cup C_M(S)} + U_B) + \dim(U_{D\cup S\cup C_M(S)} \cap U_B).\]
	Note $U_{D\cup S\cup C_M(S)}+U_B = (U_D + U_S + U_{C_M(S)}) + U_B = U_S + U_B$, since $D,C_M(S)\subseteq B$. 
	Further, $U_{D\cup S\cup C_M(S)} \cap U_B = (U_{D\cup C_M(S)} + U_S) \cap U_B$. Since $U_{D\cup C_M(S)} \subseteq U_B$, the intersection distributes over the sum.
	This yields \[(U_{D\cup C_M(S)} + U_S) \cap U_B = (U_{D\cup C_M(S)} \cap U_B) + (U_S \cap U_B). \] 
	There are now two cases to consider:
	\begin{description}
		\item[Case 1] $C_M(S) \neq \emptyset$. Since $\U$ extends a weak $c$-representation of $M$ \Cref{lemm:extension} implies
		\[U_S \cap U_B \subseteq U_{C_M(S)} \subseteq U_{D\cup C_M(S)},\] and $U_{D\cup C_M(S)} = U_{D\cup C} \cap U_B$. Thus
		\[(U_{D\cup C_M(S)} \cap U_B) + (U_S \cap U_B) = U_{D\cup C_M(S)}.\] 
		Further, $\dim(U_{D\cup C_M(S)}) = c\cdot r(D\cup C_M(S)) = c\cdot r(D\cup S)$, where the first equality holds since $\U$ extends a weak $c$-representation of $M$.
		\item[Case 2] $C_M(S)=\emptyset$. In this case, $S$ is full in $\U$ by \Cref{lemm:triangle_regularity}.
		Thus, we obtain $\dim(U_S \cap U_B) = c\cdot r(S)$. Since $D\cup C_M(S) = D \subseteq B$, this implies:
		\begin{align*}
		\dim((U_{D\cup C_M(S)} \cap U_B) + (U_S \cap U_B)) &= \dim(U_D + (U_S \cap U_B)) \\
		&= \dim(U_D)+ \dim(U_S \cap U_B) - \dim(U_D \cap U_S \cap U_B) \\
		&= c\cdot(r(D) + r(S) - [r(D) + r(S) - r(D\cup S)]) \\
		&= r(D\cup S).
		\end{align*}
	\end{description}
	Thus in either case we obtain $\dim((U_{D\cup C_M(S)} + U_S) \cap U_B) = c\cdot r(D\cup S),$ and on substituting this into the previous equation:
	\[\dim(U_{D\cup S\cup C_M(S)}) + \dim(U_B) = \dim(U_{S\cup B}) + c\cdot r(D\cup S).\]
	Rearranging and replacing the dimensions with ranks gives the claim.
\end{proof}

\begin{theorem}\label{thm:regularization}
	Let $M=(E,r)$ be a triangle matroid with distinguished basis $B$. Then there is a polymatroid $g$ extending $M$ such that $M$ has a  weak $c$-representation with respect to $B$ if and only if $c\cdot g$ has a subspace arrangement representation $\U$, that is $r_{\U}^c=g$.
	
	Further, given a weak $c$-representation $\A$ of $M$, the subspace arrangement $\U$ representing $c\cdot g$ can be chosen to extend $\A$.
\end{theorem}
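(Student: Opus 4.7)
I construct $g$ by iterating the combinatorial inflation $\mathcal{I}_{\text{comb}}$ of \Cref{def:comb_reg} over all nonempty subsets of $E\setminus B$. Fix an enumeration $S_1,\dots,S_N$ in which each set appears after all its proper subsets (for instance, by increasing cardinality). Starting from $g_0 \coloneqq r$, which trivially satisfies \eqref{eq:partial}, set $g_i \coloneqq \mathcal{I}_{\text{comb}}(g_{i-1}, S_i)$ and $g \coloneqq g_N$; repeated application of \Cref{prop:comb_reg} confirms that $g$ is a polymatroid extending $M$.

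The $(\Leftarrow)$ direction is immediate: any subspace arrangement $\U$ with $r^c_\U = g$ produces the weak $c$-representation $\{U_e\cap U_B\}_{e\in E}$ of $M$ via \Cref{thm:intersection_thm} applied to the extension $g$ of $M$.

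For the $(\Rightarrow)$ direction, let $\A$ be a weak $c$-representation of $M$. Since $M$ is a triangle matroid, every element sits on a two-element flat of $B$, so $A_e\subseteq A_B$; a short case analysis using the weak-representation constraint on sets $Y$ with $|Y\setminus B|\le 1$ (which in particular forces the identifications $A_e = A_{b^{(i)}}$ whenever $e$ is parallel to a basis element) establishes that $\A$ is already an extension in the sense of \Cref{def:extension}. I then embed $\A$ in a sufficiently large ambient space, set $\U_0 \coloneqq \A$, and define $\U_i \coloneqq \mathcal{I}(\U_{i-1}, S_i)$ using the algebraic inflation of \Cref{sec:alg_reg} along the same enumeration. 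The enumeration guarantees that every proper subset of $S_i$ is full in $\U_{i-1}$, so \Cref{cor:after_reg} applies at each step and each $\U_i$ remains an extension. By \Cref{theo:alg_reg}, the algebraic update rule for $r^c_{\U_i}$ is identical to the combinatorial one for $g_i$, and $\U \coloneqq \U_N$ contains $\A$ by construction, giving the desired arrangement.

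The main obstacle is to verify the inductive identity $r^c_{\U_i} = g_i$, since a priori $r^c_\A$ may strictly undercut $r = g_0$ on subsets with two or more non-basis elements. The key is the ``reset'' behaviour in the $Z = S_i$ case of \Cref{theo:alg_reg}, which rewrites $r^c_{\U_i}(T\cup S_i)$ through the value $r^c_{\U_{i-1}}(T\cup S_i\cup C_M(S_i))$ on a set containing strictly more of the basis. Combined with \Cref{prop:contraction_formula}, which expresses such values in terms of the quotient rank $r^c_{\U}(S\cup B) - r^c_{\U}(B)$, and with the pinning $r^c_{\U_j}(Y) = 3$ for every $Y\supseteq B$, the dependence on the initial $\A$ is washed out as the inflations advance. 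A bookkeeping induction on $i$ then yields $r^c_{\U_N} = g$, completing the forward direction.
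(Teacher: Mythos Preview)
Your approach matches the paper's: iterate $\mathcal{I}_{\text{comb}}$ over $\mathcal{P}(E\setminus B)$ in an inclusion-refining order to build $g$, handle $(\Leftarrow)$ via \Cref{thm:intersection_thm}, and for $(\Rightarrow)$ run the algebraic inflations $\mathcal{I}$ in parallel from $\U_0 = \A$, invoking \Cref{theo:alg_reg}, \Cref{cor:after_reg}, and \Cref{prop:contraction_formula}. Your remark that one must verify $\A$ itself is an extension in the sense of \Cref{def:extension} is correct and worth noting.

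The final paragraph, however, is not yet a proof. The claim ``$r^c_{\U_j}(Y) = 3$ for every $Y\supseteq B$'' is false for $j\ge 1$ (each inflation strictly raises the rank on sets meeting $S_j$); only the base case $r^c_{\U_0}(B\cup T)=3=g_0(B\cup T)$ holds. More importantly, ``$r^c_{\U_i}=g_i$'' cannot serve as the inductive hypothesis: it fails at $i=0$, as you observe, and it continues to fail at intermediate $i$ on sets $D\cup S_k$ with $k>i$, so the fact that the two update rules are formally identical does not by itself propagate equality. The paper resolves this with a two-layer induction. First it proves $g_i(B\cup T)=r^c_{\U_i}(B\cup T)$ for all $i$ and all $T$: the base case is the pinning at $3$, and at each step both inflation formulas add the same purely combinatorial increment (depending only on $|T\cap S_i|$ and on whether $S_i\subseteq T$), so equality on sets containing $B$ persists. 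Second, using this first claim together with \Cref{prop:contraction_formula}, it proves $g_i(D\cup S_j)=r^c_{\U_i}(D\cup S_j)$ for all $D\subseteq B$ and all $j\le i$; the new case $j=i$ is reduced via \Cref{prop:contraction_formula} to a value at $S_i\cup B$, already handled by the first claim. Only at $i=N$ does this second statement cover every subset of $E$, giving $r^c_{\U_N}=g$. Your sketch names the right ingredients but does not assemble them into this inductive structure.
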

\begin{proof}
	Choose a linear ordering $S_0 = \emptyset, S_1,\dots,S_n$ on $\mathcal{P}(E\setminus B)$ which refines the ordering given by inclusion.
	That is, if $S_i \subseteq S_j$ then $i \le j$.
	We inductively define a sequence of polymatroids $g_0 , \ldots,  g_n$: set $g_0\coloneqq r$ and given $g_i$, define $g_{i+1} \coloneqq \mathcal{I}_\text{comb}(g_i,S_{i+1})$ for $i=0,\dots,n-1$.
	Finally, set $g\coloneqq g_n$.
	\Cref{prop:comb_reg} implies that $g$ is a polymatroid extension of $M$ with respect to $B$.
	
	\Cref{thm:intersection_thm} implies that if $c\cdot g$ is representable as the rank function of a subspace arrangement then $M$ is weakly $c$-representable.
	
	For the other implication, suppose a weak $c$-representation $\mathcal{A}=\{A_e\}_{e\in E}$ of $M$ is given.
	We inductively produce a sequence of subspace arrangements $\U_0  \ldots, \U_n$ with $\mathcal{U}_i=\{U_{i,e}\}_{e\in E}$ as follows: set $\U_0\coloneqq \A$ and given $\U_i$, define $\U_{i+1} \coloneqq \mathcal{I}(\U_i, S_{i+1})$ for $i=0,\dots,n-1$. Lastly, set $\U\coloneqq \U_{n}$.
	
	Note we never inflate with respect to $S_0 = \emptyset$.
	Furthermore, the choice of order on $\mathcal{P}(E\setminus B)$ and \Cref{cor:after_reg} implies that in each step every proper subset of $S_i$ is full in $\U_{i-1}$.
	Thus the assumptions of \Cref{theo:alg_reg} are satisfied in each inflation step.
	
	We perform an induction which, comparing the combinatorial inflation operation $\mathcal{I}_\text{comb}$ with the linear-algebraic inflation operation $\mathcal{I}$, proves $\mathcal{U}$ represents $c\cdot g$ as desired.
	
	We do this using the following two claims.
	
	\begin{claim}\label{claim:1}For any $1\le i\le n$ and any $T\subseteq E$: $c\cdot g_i (B\cup T) = r_{\U_i}(B\cup T).$
	\end{claim}
	
	Note that the contractions of $c\cdot g_i$ and $r_{\U_i}$ by $B$ are given by \[T\mapsto c\cdot g_i (B\cup T) - c\cdot g_i(B) \quad\mathrm{and}\quad T\mapsto r_{\U_i}(B\cup T) - r_{\U_i}(B)\]respectively. Since $c\cdot g_i(B) =r_{\U_i}(B)$, the claim shows that these contractions are equal.
	
	Once this has been shown, we will use it to prove a slightly more general statement.
	
	\begin{claim}\label{claim:2} If $D\subseteq B$ and $j\le i$ then $c\cdot g_i (D\cup S_j) = r_{\U_i}(D\cup S_j).$
	\end{claim}
	
	The theorem itself is directly implied by \Cref{claim:2} for the case $i=n$, since any subset of $E$ may be written as $A \cup S_j$ with $A\subseteq B, 1\le j\le n$.
	
	\begin{proof}[Proof of \Cref{claim:1}]
		For $i=0$ and for any $T\subseteq E$, \[g_i(T\cup B) = r^c_{\U_i}(T\cup B) = r(B)\] by definition.
		
		Let $i>0$. Applying \Cref{def:comb_reg} \footnote{In the notation of \Cref{def:comb_reg}, we take $A = (T\cup B) - S_i$ and $Z = (T\cup B)\cap S_i = T\cap S_i$.} we see that for $C_M(S_i) \subseteq B$ as in \Cref{def:triangle_matroid}
		\[
		g_i(T\cup B)=\begin{cases}
		g_{i-1}(T\cup B)+2|T\cap S_i|, & S_i\not\subseteq T,\\
		g_{i-1}(T\cup B\cup C_M(S_i))+2|T\cap S_i|-1, & S_i\subseteq T.
		\end{cases}\] 
		Thus, $g_{i}\left(T\cup B\cup C_M(S_i)\right)=g_{i}\left(T\cup B\right)$.
		Subtracting $g_{i-1}\left(T\cup B\right)$ from both sides, we find
		\[
		g_{i}(T\cup B) - g_{i-1}(T\cup B) = 2|T\cap S_i| - 
		\begin{cases}
		0, & S_i\not\subseteq T, \\
		1, & S_i\subseteq T.
		\end{cases}\]
		
		Applying the same reasoning using \Cref{theo:alg_reg} yields the same formula for $r_{\U_i}(T\cup B) - r_{\U_{i-1}}(T\cup B)$.
		Therefore:
		\begin{align*}
		g_{i}(T\cup B)-r(T\cup B) &= g_{i}(T\cup B)-g_0(T\cup B) \\
		&= \sum_{j=1}^{i}(g_j(T\cup B)-g_{j-1}(T\cup B)) \\
		&= \sum_{j=1}^{i} \left( 2|T\cap S_{i+1}| - 
		\begin{cases}
		0, & S_{i}\not\subseteq T, \\
		1, & S_{i}\subseteq T,
		\end{cases}\right) \\
		&= \sum_{j=1}^{i}(r_{\U_{j}}^c(T\cup B)-r_{\U_{j-1}}^c(T\cup B)) \\
		&= r_{\U_{i}}^c(T\cup B) - r_{\U_0}^c(T\cup B) \\
		&= r_{\U_{i}}^c(T\cup B) - r(T\cup B).
		\end{align*}
		Hence, we have the equality $g_i(T\cup B) = r^c_{\U_i}(T\cup B).$
	\end{proof}
	\begin{proof}[Proof of \Cref{claim:2}]
		We proceed again by induction on $i$.
		The claim is trivially true for $i=0$ by the definition of a weak $c$-arrangement: what it means is that $r(D) = r_\A^c (D)$ for any $D\subseteq B$.
		
		Suppose the claim is true for some $i<n$. Let us show it also holds for $i+1$:
		For $j\le i$ and any $D\subseteq B$ we obtain by setting $Z \coloneqq S_{i+1}\cap S_j$ and $T\coloneqq S_j \setminus Z$:
		\begin{align*}
		g_{i+1}(D\cup S_j) &= g_{i+1}((D\cup T) \cup Z) \\
		&= g_i(D\cup T\cup Z) + 2|Z| \\
		&= g_i(D\cup S_j) + 2|Z|,
		\end{align*}
		and $r^c_{\U_{i+1}}(D\cup S_j) = r^c_{\U_i}(D\cup S_j) + 2|Z|$ in the same way. Applying the claim with the index~$i$, we see \[c\cdot g_{i+1}(D\cup S_j) = c\cdot g_{i}(D\cup S_j) + 2c|Z| = r_{\U_{i}}(D\cup S_j) + 2c|Z| = r_{\U_{i+1}}(D\cup S_j).\]
		
		For $j=i+1$, consider $C_M(S_{i+1})\subseteq B$ as defined in \Cref{def:triangle_matroid}.
		We have by \Cref{theo:alg_reg,prop:contraction_formula}:
		\begin{align*}
		r^c_{\U_{i+1}}(D\cup S_{i+1}) &= r^c_{\U_i}(D\cup C_M(S_{i+1}) \cup S_{i+1}) + 2|S_{i+1}|-1 = \\
		&= r^c_{\U_i}(S_{i+1}\cup B) - r^c_{\U_i}(B) + r(D \cup S_{i+1}) +2|S_{i+1}|-1 \\
		&= r^c_{\U_i}(S_{i+1}\cup B) - r(B) + r(D \cup S_{i+1}) +2|S_{i+1}|-1,
		\end{align*}
		where the last equality holds because $\U_{i}$ extends a weak $c$-representation of $M$, and thus $r^c_{\U_i}(Z) = r(Z)$ for any $Z\subseteq B$.
		
		In exactly the same way, using \Cref{def:comb_reg} in place of \Cref{theo:alg_reg}, we find:
		\[g_{i+1}(D\cup S_{i+1}) = g_i(S_{i+1}\cup B) - r(B) + r(D \cup S_{i+1}) +2|S_{i+1}|-1.\]
		Hence \[r^c_{\U_{i+1}}(D\cup S_{i+1}) - g_{i+1}(D\cup S_{i+1}) = r^c_{\U_i}(S_{i+1}\cup B) - g_i(S_{i+1}\cup B),\]
		and by \Cref{claim:1} the difference on the right side is $0$.
	\end{proof}
	As we noted above, this proves the theorem.	
\end{proof}

\section{Bases of $c$-Admissible Arrangements}\label{sec:c-bases}

This section has two main purposes. The first is to translate questions about polymatroids and $c$-admissible arrangements to questions about matroids and $c$-arrangements. This is carried out in \cref{sec:expansions}.

The second is more directly related to inflations and generalized Dowling geometries: \cref{thm:regularization} gives a method by which to extend a weak $c$-representation $\A$ of a triangle matroid into a $c$-admissible subspace arrangement $\U$. This construction gives an arrangement of a combinatorial type that does not depend on $\A$.

We want to apply group-theoretic undecidability results to this construction, where the weak arrangement $\A$ is constructed from a group presentation as in \cref{sec:staudt}. For this, we need to check whether some two subspaces $A_x,A_y$ of $\A$ are different, and this needs to be encoded in the combinatorics of $\U$'s rank function; but the rank function of $\U$ contains no such information. It does not even know whether $\A$ was constructed from a trivial representation of the group or a faithful one. Thus our second goal is to modify $\U$ in such a way that the resulting rank function contains the required information. We call this construction "forcing an inequality". It is carried out in \cref{sec:inequality}; \cref{sec:well_sep} contains a technical preliminary proposition.

As with previous parts of our proof, an algebraic construction has a parallel combinatorial one. The necessary combinatorial construction will be an easy application of facts from \cref{sec:expansions}.

\subsection{Expansions and $c$-Bases}\label{sec:expansions}
We wish to translate problems about $c$-admissible subspace arrangements and polymatroids to problems about $c$-arrangements and matroids. This is entirely analogous to translating problems on subspace arrangement to problems on vector arrangements. 

Given a subspace arrangement representing a polymatroid $g$, one can construct a vector arrangement from it as follows: pick a basis for every subspace, and take the collection of all resulting basis vectors. If we keep track of which vector came from which subspace, the original subspace arrangement can be reconstructed.

This construction does not depend only on $g$: the result depends on the specific choice of bases. However, if the ground field is large enough and the bases are chosen generically, we always obtain the same matroid. This matroid is called the free expansion $\mathscr{F}(g)$.

In fact there are only finitely many possible vector arrangements obtained by picking bases for a subspace arrangement representing the polymatroid $g$. The matroids arising in this way are called the expansions of $g$, and form a subset of the weak images of $\mathscr{F}(g)$. For further details, cf.\ \cite[Proposition 10.2.6]{Ngu86} or \cite[Chapter 11]{Oxl11} (with slightly different terminology and notation).

We will have use for both the free expansion and for the set of all expansions of $g$. This section collects the relevant definitions and lemmas, where subspace and vector arrangements are systematically replaced by $c$-admissible arrangements and $c$-arrangements respectively.

\begin{defn}[Arrangement $c$-bases and polymatroid expansions]\label{def:expansion}
$ $ 

	\begin{enumerate}
	\item 	Let $\mathcal{U}=\left\{ U_{e}\right\} _{e\in E}$ be a $c$-admissible
	subspace arrangement, and denote $d_{e}=\frac{1}{c}\dim U_{e}$ for
	each $e\in E$. A \emph{$c$-basis} of $\mathcal{U}$ is a $c$-arrangement
	\[\mathcal{W} = \{W_{e,i}\}_{\substack{e\in E, \\ 1\le i\le d_e}}\]
	in the same ambient vector space as $\U$, satisfying that for each $e\in E$:
	\[
	U_{e}=\sum_{i=1}^{d_{e}}W_{e,i}.
	\]
		
	In this situation, we denote $\mathcal{W}_e = \{W_{e,i}\}_{1\le i\le d_e}$.
	
	Note that if $c=1$, the subspaces $W_{e,i}$ are lines, and, after
	identifying each $W_{e,i}$ with an arbitrary nonzero point on it,
	we find that each $\mathcal{W}_{e}$ is a basis of the subspace $U_{e}$.
	This is the sense in which these objects are $c$-bases.
	
	\item	The \emph{combinatorial type of the $c$-basis $\mathcal{W}$}
			is the matroid given by $r^c_\mathcal{W}$.
	
	\item An \emph{expansion} of a polymatroid $(E,g)$ is a matroid with rank function $r$ on the ground set $\{(e,i) \mid e\in E, 1\le i\le g(e)\}$ satisfying the following property for any $S \subseteq E$: \[
	r\left(\{(e,i)\mid e\in S, 1\le i\le g(e)\}\right) = g(S).
	\]
	
	In particular, the combinatorial type of a $c$-basis of $\U$ is an expansion of $r^c_\U$.
	However, the converse does not always hold.
\end{enumerate}
\end{defn}
\begin{defn}
Let $\mathcal{U}=\left\{ U_{e}\right\} _{e\in E}$ a $c$-admissible subspace arrangement.
Denote $d_{e}=\frac{1}{c}\dim\left(U_{e}\right)$ for each $e\in E$.

In each $U_{e}$, choose $d_{e}$ generic subspaces $W_{e,1},\dots,W_{e,d_E}$, each of dimension $c$.
The arrangement $\{W_{e,i}\}_{1\le i\le d_e}$ is called a \emph{generic $c$-basis} of $\mathcal{U}$. 
\end{defn}

It is a consequence of the following lemma that a generic $c$-basis is a $c$-arrangement.

\begin{lemm}[Splitting Lemma]\label{lemm:splitting}Let $\left\{ U_{e}\right\} _{e\in E}$ be a
	$c$-admissible subspace arrangement in a vector space $V$. For each
	$e\in E$ let $k_{e}$ be a nonnegative integer and let $W_{e,1},\ldots,W_{e,k_{e}}$
	be generic subspaces of $U_{e}$, each of dimension $c$. Then 
	$
	\left\{ W_{e,i}\right\} _{\substack{e\in E,\\
			1\le i\le k_{e}
		}
	}
	$
	is a $c$-admissible subspace arrangement.
\end{lemm}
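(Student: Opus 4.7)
The plan is to prove the stronger statement that for every subset $T$ of indices $(e,i)$ and every $F \subseteq E$, $\dim(V_T + U_F)$ is a multiple of $c$, where $V_T \coloneqq \sum_{(e,i) \in T} W_{e,i}$. The Splitting Lemma itself is the case $F = \emptyset$. The main conceptual step is recognizing that one must carry the extra summand $U_F$ in the induction; without it, the natural case analysis produces a term $V_{T'} + U_{F \cup \{f\}}$ that falls outside the plain inductive hypothesis.

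I would induct on $|T|$. The base case $T = \emptyset$ reduces to the $c$-admissibility of $\{U_e\}_{e \in E}$. For the inductive step, pick any $(f,j) \in T$ and set $T' = T \setminus \{(f,j)\}$. If $f \in F$, then $W_{f,j} \subseteq U_f \subseteq U_F$, so $V_T + U_F = V_{T'} + U_F$ and the inductive hypothesis applies directly. If $f \notin F$, I would invoke the standard genericity formula: for a generic $c$-dimensional subspace $W \subseteq U_f$ and any fixed subspace $X$ of the ambient space,
\[
\dim(W \cap X) = \max\{0,\, c + \dim(U_f \cap X) - \dim U_f\},
\]
and moreover $W + X = U_f + X$ whenever this maximum is strictly positive (a short dimension count inside $U_f$ confirms this).

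Applying the formula with $X = V_{T'} + U_F$ splits the inductive step into two subcases. In the first, $W_{f,j} \cap X = 0$, so $\dim(V_T + U_F) = \dim(V_{T'} + U_F) + c$, a multiple of $c$ by the inductive hypothesis. In the second, $V_T + U_F = V_{T'} + U_f + U_F = V_{T'} + U_{F \cup \{f\}}$, which is a multiple of $c$ by the inductive hypothesis applied to the strictly smaller pair $(T', F \cup \{f\})$. This closes the induction.

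The remaining routine point is that all of the $W_{e,i}$ can be chosen jointly generically so that the genericity formula holds simultaneously for every auxiliary subspace $X = V_{T'} + U_F$ that appears in the induction. Since only finitely many such $X$ arise and each genericity condition is Zariski-open, this is guaranteed by the discussion of genericity in Section~\ref{sec:pre}.
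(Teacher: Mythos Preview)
Your proof is correct and takes essentially the same approach as the paper. The paper proves by induction on $k=\sum_e k_e$ that the enlarged arrangement $\{W_{e,i}\}\cup\{U_e\}_{e\in E}$ is $c$-admissible, which is exactly your strengthened statement that $\dim(V_T+U_F)\in c\cdot\mathbb{N}$ for all $T,F$; your induction on $|T|$ and removal of a single $(f,j)$ mirrors the paper's step of adding one generic $W$ at a time, and your two subcases collapse to the paper's dichotomy between $U_f\subseteq W_{\mathcal S}$ and $U_f\not\subseteq W_{\mathcal S}$ (the paper uses the inductive fact that $\dim(U_f\cap W_{\mathcal S})$ is already a multiple of $c$ to see directly that a generic $c$-subspace of $U_f$ meets $W_{\mathcal S}$ trivially unless $U_f\subseteq W_{\mathcal S}$). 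The only cosmetic difference is that the paper's sequential construction makes the genericity claim immediate, whereas you handle it with a joint Zariski-open argument at the end; fixing an order on the $(e,i)$ and always removing the last one makes your argument literally identical to the paper's.
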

\begin{proof}
	We will prove by induction on $k\coloneqq\sum_{e\in E}k_{e}$ that
	\[
	\mathcal{W}\coloneqq\left\{ W_{e,i}\right\} _{\substack{e\in E,\\
			1\le i\le k_{e}
		}
	}\cup\left\{ U_{e}\right\} _{e\in E}
	\]
	is $c$-admissible. Since a subset of a $c$-admissible arrangement
	is also $c$-admissible, this proves the lemma.
	
	For $k=0$, the statement is just the assumption that $\left\{ U_{e}\right\} _{e\in E}$
	is $c$-admissible.
	
	Suppose the statement is true for $k-1\ge0$, and let $\mathcal{W}$
	be any arrangement as above with $\sum_{e\in E}k_{e}=k-1$. Let $e\in E$
	and let $W\subseteq U_{e}$ be a generic subspace of dimension $c$. We
	wish to show $\mathcal{W}\cup\left\{ W\right\} $ is also $c$-admissible.
	Given $\mathcal{S}\subseteq\mathcal{W}$, denote $W_{\mathcal{S}}=\sum_{U\in \mathcal{S}}U$. We
	want to show
	$
	\dim\left(W_{\mathcal{S}}+W\right)\in c\cdot\mathbb{N},
	$
	and it suffices to prove that 
	\[
	\dim\left(W_{\mathcal{S}}\cap W\right)=\dim\left(W_{\mathcal{S}}\right)+\dim\left(W\right)-\dim\left(W_{\mathcal{S}}+W\right)\in c\cdot\mathbb{N}.
	\]
	Clearly if $U_{e}\subseteq W_{\mathcal{S}}$ we are done, since then also $W\subseteq W_{\mathcal{S}}$
	and $W_{\mathcal{S}}\cap W=W$.
	If $U_{e}\not\subseteq W_{\mathcal{S}}$, consider the intersection
	$U_{e}\cap W_{\mathcal{S}}$: it has dimension $m\cdot c$ for some nonnegative
	integer $m$, with $mc<\dim\left(U_{e}\right)$.
	Since $\dim\left(U_{e}\right)\in c\cdot\mathbb{N}$, we have
	$\dim\left(U_{e}\right)\ge\left(m+1\right)c$.
	By genericity of $W$
	in $U_{e}$, the intersection $W\cap\left(U_{e}\cap W_{\mathcal{S}}\right)$
	is trivial. Thus we have
	\[
	0=W\cap\left(U_{e}\cap W_{\mathcal{S}}\right)=\left(W\cap U_{e}\right)\cap W_{\mathcal{S}}=W\cap W_{\mathcal{S}}.
	\]
	Since $\mathcal{S}\subseteq\mathcal{W}$ was arbitrary, the arrangement $\mathcal{W}\cup\left\{ W\right\} $
	is $c$-admissible as required.
\end{proof}

We now give a description of the matroid underlying a generic $c$-basis. This description is not new (cf. \cite[Proposition 10.2.7]{Ngu86}), except possibly in the context of $c$-arrangements.
We include it for the reader's convenience.

\begin{prop}
	Let $\U=\{U_e\}_{e\in E}$ be a $c$-admissible subspace arrangement. Denote $d_e \coloneqq \frac{1}{c}\dim(U_e)$ for each $e\in E$, and let $\mathcal{W}\coloneqq \{W_{e,i}\}_{\substack{e\in E, \\ 1\le i\le d_e}}$ be a generic $c$-basis.
	 For each $e\in E$, choose some $\mathcal{S}_e \subseteq \{W_{e,i}\}_{1\le i\le d_e}$.
	Denote $\mathcal{S}=\bigcup_{e\in E}\mathcal{S}_e$. Then:
	\[
	\dim\left(\sum_{W\in S}W\right)<c\left|S\right|
	\]
	if and only if there is a subset $F \subseteq E$ such
	that
	\begin{equation}\label{eq:F}
	\dim\left(\sum_{f\in F}U_f\right)<c\sum_{f\in F}\left|\mathcal{S}_f\right|.
	\end{equation}
\end{prop}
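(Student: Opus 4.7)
My plan is to prove the biconditional by showing each direction separately; the backward implication is a direct dimension count, while the forward implication I will establish by induction on $|\mathcal{S}|$ with a two-case split that ultimately rests on the genericity of the $c$-basis.

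For the direction $(\Leftarrow)$, given such an $F$, set $\mathcal{T} = \bigcup_{f\in F}\mathcal{S}_f \subseteq \mathcal{S}$ and note that $\sum_{W\in\mathcal{T}} W \subseteq \sum_{f\in F} U_f$, so its dimension is strictly less than $c\sum_{f\in F}|\mathcal{S}_f| = c|\mathcal{T}|$. Each of the remaining $c$-dimensional subspaces in $\mathcal{S}\setminus\mathcal{T}$ contributes at most $c$ to the dimension, so the total is strictly less than $c|\mathcal{S}|$.

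For $(\Rightarrow)$, I induct on $|\mathcal{S}|$, the base $|\mathcal{S}|=0$ being vacuous. The Splitting Lemma gives that $\mathcal{W}$ is $c$-admissible, so removing or adding one subspace $W$ to any partial sum changes its dimension by either $0$ or $c$. Suppose $\dim(\sum_{W\in\mathcal{S}} W) < c|\mathcal{S}|$ and consider two cases. In \textbf{Case 1}, some $W_{e_0, i_0}\in\mathcal{S}$ is not contained in $V \coloneqq \sum_{W\in\mathcal{S}\setminus\{W_{e_0, i_0}\}} W$; then $\dim V = \dim(\sum_{W\in\mathcal{S}} W) - c < c(|\mathcal{S}|-1)$, so applying the induction hypothesis to $\mathcal{S}\setminus\{W_{e_0, i_0}\}$ yields some $F$ with $\dim(\sum_{f\in F}U_f) < c\sum_{f\in F}|(\mathcal{S}\setminus\{W_{e_0, i_0}\})_f| \le c\sum_{f\in F}|\mathcal{S}_f|$, so the same $F$ works for $\mathcal{S}$. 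In \textbf{Case 2}, every $W \in \mathcal{S}$ is contained in the sum of the others; this is where the genericity of the $c$-basis is crucial. For any $W_{e_0, i_0}\in\mathcal{S}$, the subspace $V$ above does not depend on $W_{e_0, i_0}$, which we may regard as a generic $c$-dimensional subspace of $U_{e_0}$; thus $W_{e_0, i_0} \subseteq V$ forces $U_{e_0} \subseteq V \subseteq \sum_{W\in\mathcal{S}} W$, because the $c$-subspaces of $U_{e_0}$ contained in any proper subspace of $U_{e_0}$ form a Zariski-closed subvariety of strictly smaller dimension in the Grassmannian. Setting $F \coloneqq \{e\in E : \mathcal{S}_e\neq\emptyset\}$, we then have $\sum_{f\in F} U_f \subseteq \sum_{W\in\mathcal{S}} W$ and the reverse inclusion is trivial, so $\dim(\sum_{f\in F}U_f) = \dim(\sum_{W\in\mathcal{S}} W) < c|\mathcal{S}| = c\sum_{f\in F}|\mathcal{S}_f|$.

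The main obstacle is the genericity step in Case 2. It requires interpreting the generic $c$-basis as one where each $W_{e, i}$ is generic in $U_{e}$ relative to anything definable from the other subspaces. This is legitimate because the combined genericity conditions imposed over the course of the argument amount to a finite intersection of nonempty Zariski-open conditions on the relevant product of Grassmannians, which remains nonempty over an infinite field. Once this genericity is granted, the remaining bookkeeping (checking that the inequality transfers from $\mathcal{S}\setminus\{W_{e_0, i_0}\}$ to $\mathcal{S}$, and handling trivial cases) is immediate.
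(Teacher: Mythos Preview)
Your proof is correct. For the $(\Leftarrow)$ direction you give the same dimension count as the paper. For $(\Rightarrow)$, the paper simply asserts that this direction ``holds by genericity of the subspaces'' and offers no further argument; your induction on $|\mathcal{S}|$ with the Case~1/Case~2 split is a correct way to unpack this, and your key step in Case~2 (a generic $c$-dimensional subspace of $U_{e_0}$ contained in $V$ forces $U_{e_0}\subseteq V$, since the other choices determine $V$) is precisely the content the paper's one-line appeal to genericity is hiding. So your approach is the same in spirit, just more explicit than the paper's.
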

\begin{proof}
	The implication ``$\Rightarrow$'' holds by genericity of the subspaces $U_{i,j}$.
	
	For the other direction, assume there is a subset $F \subseteq E$ satisfying Equation~\eqref{eq:F}.
	Then since $\sum_{W\in \mathcal{S}_f}W \subseteq U_f$ for each $f\in F$, we have
	\[
	\sum_{f\in F}\sum_{W\in \mathcal{S}_f}W \subseteq \sum_{f\in F} U_f.
	\]
	Hence \[ \dim\left(\sum_{f\in F}\sum_{W\in S_f}W\right)\le\dim\left(\sum_{f\in F} U_f\right)<c\cdot \sum_{f\in F} |\mathcal{S}_f|.\] Therefore, we obtain:
	\begin{align*}
	\dim\left(\sum_{e\in E} \sum_{W\in \mathcal{S}_e}W\right) &= \dim\left(\sum_{f\in F} \sum_{W\in \mathcal{S}_f}W + \sum_{e\notin F}\sum_{W\in \mathcal{S}_e}W\right) \\
	&\le \dim\left(\sum_{f\in F} \sum_{W\in \mathcal{S}_f}W\right) + \dim\left(\sum_{e\notin F}\sum_{W\in \mathcal{S}_e}W\right) \\
	&< c\sum_{f\in F}|\mathcal{S}_f| + c\sum_{e\notin F}|\mathcal{S}_e| \\
	&= c\sum_{e\in E} |\mathcal{S}_e| = c|\mathcal{S}|. \qedhere
	\end{align*}
\end{proof}
This gives a description
of the independent sets in the matroid given by any generic $c$-basis of $\U$. Following \cite{Ngu86}, we call this matroid the \emph{free expansion} of $r^c_\U$ and denote it by $\mathscr{F}(r^c_{\U})$. Similarly, for a polymatroid $g$ we will use the notation $\mathscr{F}(g)$.

The following lemmas capture the correspondence between $c$-bases
and the combinatorics of polymatroids.
\begin{lemm}\label{lemm:c_basis_expansion1}
	Let $g$ be a polymatroid on $E$ and let $(\{(e,i)\mid e\in E, 1\le i\le g(e)\}, r)$ be an expansion. If \[\mathcal{W} = \{W_{e,i}\}_{\substack{e\in E, \\ 1\le i\le g(e)}}\] is a $c$-arrangement representing $r$, then the arrangement $\U=\{U_e\}_{e\in E}$ where $U_e = \sum_{i=1}^{g(e)} W_{e,i}$ is a subspace arrangement representing $c\cdot g$.
\end{lemm}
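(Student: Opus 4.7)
The proof is essentially an unfolding of definitions, so the plan is short. My goal is to show that for every $S\subseteq E$, the identity $\dim(U_S) = c\cdot g(S)$ holds, which is exactly what it means for $\U$ to represent $c\cdot g$.

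First I would fix $S\subseteq E$ and set $T_S \coloneqq \{(e,i)\mid e\in S,\ 1\le i\le g(e)\}$, the ground set subset of the expansion matroid corresponding to $S$. By the definition of an expansion given in \Cref{def:expansion}, we have $r(T_S) = g(S)$. Since $\mathcal{W}$ is a $c$-arrangement representing the matroid $r$, the normalized rank function $r^c_\mathcal{W}$ agrees with $r$, so
\[
\dim\Bigl(\sum_{(e,i)\in T_S} W_{e,i}\Bigr) \;=\; c\cdot r(T_S) \;=\; c\cdot g(S).
\]

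Next I would observe that the sum on the left coincides with $U_S$. Indeed, by the definition of $U_e = \sum_{i=1}^{g(e)} W_{e,i}$,
\[
U_S \;=\; \sum_{e\in S} U_e \;=\; \sum_{e\in S}\sum_{i=1}^{g(e)} W_{e,i} \;=\; \sum_{(e,i)\in T_S} W_{e,i}.
\]
Combining the two displayed equalities gives $\dim(U_S) = c\cdot g(S)$, i.e.\ $r^c_\U(S) = g(S)$, for every $S\subseteq E$. Applied with $S = \{e\}$ (recall that by the definition of an expansion one has $r(\{(e,i)\mid 1\le i\le g(e)\}) = g(e)$, so each $U_e$ has dimension $c\cdot g(e)$, a multiple of $c$), this simultaneously witnesses that $\U$ is a subspace arrangement representing $c\cdot g$.

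There is no real obstacle here: the statement is a bookkeeping consequence of the fact that the sum operation on the indexed subspaces $W_{e,i}$ commutes with the grouping imposed by the ground set projection $(e,i)\mapsto e$. The only point worth mentioning is that one does not need to invoke the Splitting Lemma or genericity, because $\mathcal{W}$ is given as an honest $c$-arrangement and we only read off a single dimension equality from each subset $T_S$.
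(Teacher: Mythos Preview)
Your proof is correct and matches the paper's approach: the paper simply states ``This is a direct consequence of the definitions'' without writing out any details, and your argument is precisely the unfolding of those definitions that the paper leaves implicit.
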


This is a direct consequence of the definitions.

\begin{lemm}\label{lemm:c_basis_expansion2}
	Let $g$ be a polymatroid on $E$. Any expansion $M$ of $g$ is a weak image of the free expansion $\mathscr{F}(g)$, that is $M$ and $\mathscr{F}(g)$ are matroids on the same ground set and any independent set in $M$ is also independent in $\mathscr{F}(g)$.
\end{lemm}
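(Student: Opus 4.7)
The plan is to combine the combinatorial description of independence in $\mathscr{F}(g)$ from the preceding proposition with the defining equality of an expansion. The ground sets agree by construction, so only the statement about independent sets requires work. First I would recast the proposition: a subset $S$ of the ground set $\{(e,i)\mid e\in E,\ 1\le i\le g(e)\}$ is independent in $\mathscr{F}(g)$ if and only if
\[
\sum_{f\in F}|S_f| \le g(F) \quad\text{for every } F\subseteq E,
\]
where $S_f \coloneqq S\cap\{(f,i)\mid 1\le i\le g(f)\}$. This is the direct polymatroid translation (divide by $c$) of the dimension inequality in the proposition, and so serves as the working definition of $\mathscr{F}(g)$ for an arbitrary polymatroid.

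Now let $T$ be an independent set of the expansion $M$, and fix an arbitrary $F\subseteq E$. Put $T_F \coloneqq \bigcup_{f\in F} T_f = T \cap \{(f,i)\mid f\in F,\ 1\le i\le g(f)\}$. Since $T$ is independent in $M$, so is its subset $T_F$, hence $|T_F| = r_M(T_F)$. The defining property of an expansion gives
\[
r_M\bigl(\{(f,i)\mid f\in F,\ 1\le i\le g(f)\}\bigr) = g(F),
\]
and monotonicity of $r_M$ then yields $r_M(T_F)\le g(F)$. Combining these equalities and inequalities,
\[
\sum_{f\in F}|T_f| \;=\; |T_F| \;=\; r_M(T_F) \;\le\; g(F),
\]
which is exactly the independence criterion for $\mathscr{F}(g)$. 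Since $F$ was arbitrary, $T$ is independent in $\mathscr{F}(g)$.

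I do not expect a serious obstacle here; essentially every step is forced by the definitions, and the only subtlety is keeping straight the difference between the fibre $\{(f,i)\mid f\in F,\ 1\le i\le g(f)\}$ (which realises the rank $g(F)$) and its actual intersection with $T$, which is what we need to bound. The only conceptual point is the implicit extension of the definition of $\mathscr{F}(g)$ from polymatroids arising as $r^c_\U$ of a $c$-admissible arrangement to general polymatroids, and this is precisely what the combinatorial criterion above provides.
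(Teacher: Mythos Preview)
Your argument is correct. The paper does not actually prove this lemma; it simply refers the reader to \cite[Proposition~10.2.6]{Ngu86}. What you have written is essentially the standard proof one finds there: the combinatorial independence criterion for $\mathscr{F}(g)$ (which the paper extracts from the preceding proposition and then takes as the definition of $\mathscr{F}(g)$ for an arbitrary polymatroid) is verified for any independent set of an expansion using monotonicity of $r_M$ and the defining equality $r_M(\{(f,i)\mid f\in F\}) = g(F)$. Your handling of the one genuine subtlety---that $\mathscr{F}(g)$ must be read via the combinatorial criterion rather than via an actual $c$-admissible arrangement---matches exactly how the paper sets things up in the paragraph following the proposition.
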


For the proof see \cite[Proposition 10.2.6]{Ngu86}.
It follows that $g$ has a finite set of expansions, computable from $g$.
\subsection{Well-Separated Extensions}\label{sec:well_sep}

Let $\A = \{A_e\}_{e\in E}$ be a weak $c$-representation of a generalized Dowling geometry $N_{S,R}$ as constructed in \cref{sec:staudt}.
As remarked at the beginning of this section, if $\U$ is an arrangement resulting from an inflation of $\A$ such that all subsets of $\U$ are full, the rank function $r^c_{\U}$ does not contain enough information to determine whether $A_x \neq A_y$ for certain $x,y\in E$.
However, inequalities of this form, or equivalently $r^c_{\A}(\{x,y\}) > 1$, are precisely what we need in order to apply Slobodskoi's undecidability theorem, using \cref{theo:weak_undecidable}.

To overcome this difficulty, our strategy is to modify $\U$ by adding a subspace $W$ which is contained in $A_x$ but not in $A_y$ (provided that they are in fact distinct). 

The entire proof hinges on the fact that the final rank function $r^c_{\U}$ does not depend on anything other than combinatorial data coming from a UWPFG instance. In particular, it must be independent of $c$. Therefore we need to make sure $W$ can be chosen to have some pre-determined dimension, and for this it is necessary to bound $\frac{1}{c}\dim(A_x \cap A_y)$ away from~$1$. 

For example, we must rule out the hypothetical situation in which $A_x \neq A_y$ can be achieved for $N_{S,R}$, but only with $\dim(A_x \cap A_y)=c-1$. Otherwise, being unable to bound $c$, we will not be able to pick a fixed $\varepsilon>0$ such that $W$ satisfying $\frac{1}{c}\dim(W)=\varepsilon$ can be found. A key tool here is \cref{lem:rank_reg_rep}.

We actually need a little more: it is necessary to obtain similar bounds for the overlap of $A_x$ with any subspace in $\U$. That this can be done is a consequence of the following definition and proposition.

\begin{defn}
	Let $M=\left(E,r\right)$ be a triangle matroid with distinguished
	basis $B$ and let $\mathcal{A}=\left\{ A_{e}\right\} _{e\in E}$
	be a weak $c$-representation of $M$.
	We call an extension $\mathcal{U}=\left\{ U_{e}\right\} _{e\in E}$
	of $\mathcal{A}$ \emph{well-separated }with respect to a given
	$x\in E$ if for any $T\subseteq E$, either $A_{x}\subseteq U_{T}$ or
	$\dim\left(A_{x}\cap U_{T}\right)\le\frac{1}{2}c$.
\end{defn}
The next proposition shows that the full extensions of certain weak $c$-representations of the matroids $N_{S,R}$ are well-separated.

\begin{prop}\label{prop:well_separated}
	Let $\left\langle S\mid R\right\rangle $ be a finite
	presentation and let $G$ be a finite group together with a homomorphism $\varphi:G_{S,R}\rightarrow G$.
	Set $n\coloneqq |G|$ and let $\mathcal{A}= \mathcal{A}_{G,\varphi}$ be the weak $n$-representation
	of the matroid $N_{S,R}=(E_{S,R},r)$ with respect to the distinguished basis $B=\left\{ b^{\left(1\right)},b^{\left(2\right)},b^{\left(3\right)}\right\} $ constructed in \cref{pro:G_weak_representation}.

	Let $\mathcal{U}=\left\{ U_{e}\right\} _{e\in E}$
	be an extension of $\A$ and assume that $U_{T}$ is full in $\mathcal{U}$ for any $T\subseteq E_{S,R}$.
	Then $\mathcal{U}$ is well-separated with respect
	to $x^{\left(1\right)}\in E_{S,R}$ for any $x\in S$.
\end{prop}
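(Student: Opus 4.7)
The plan is to identify $U_T\cap U_B$ with an explicit subspace coming from $\A$, and then to bound $A_{x^{(1)}}\cap (U_T\cap U_B)$ using the block-matrix form of $\A_{G,\varphi}$ together with \Cref{lem:rank_reg_rep}. Throughout, fullness of $\{x^{(1)}\}$ gives $A_{x^{(1)}}=U_{x^{(1)}}\cap U_B\subseteq U_B$, so $A_{x^{(1)}}\cap U_T = A_{x^{(1)}}\cap (U_T\cap U_B)$; fullness of $T$ gives $\dim(U_T\cap U_B)=c\cdot r(T)$, where $c=n$.

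First I would dispose of the easy subcase $x^{(1)}\in \cl_M(T)$: fullness of $T\cup\{x^{(1)}\}$ combined with the inclusion $U_T\cap U_B\subseteq U_{T\cup\{x^{(1)}\}}\cap U_B$ forces equality of dimensions and hence of these subspaces, so $A_{x^{(1)}}\subseteq U_T\cap U_B\subseteq U_T$. Assume henceforth that $x^{(1)}\notin \cl_M(T)$.

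If $C_M(T)\neq\emptyset$, then \Cref{lemm:extension} together with the equality $\dim(U_T\cap U_B)=c\cdot r(T)=c\cdot r(C_M(T))=\dim A_{C_M(T)}$ yields $U_T\cap U_B = A_{C_M(T)}$. The hypothesis $x^{(1)}\notin\cl_M(T)$ excludes $\{b^{(1)},b^{(2)}\}\subseteq C_M(T)$, and for every remaining subset $C_M(T)\subseteq B$ a direct computation using the explicit form of $A_{x^{(1)}}$ from \Cref{pro:G_weak_representation} shows $A_{x^{(1)}}\cap A_{C_M(T)}=0$.

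The main obstacle is the case $C_M(T)=\emptyset$, which forces $r(T)\le 2$ since $r(T)=3$ would give $C_M(T)=B$. Here I would first show $U_T\cap U_B = A_T$ by verifying $\dim(A_T)=c\cdot r(T)$ inside $\A_{G,\varphi}$: this is immediate for $r(T)\le 1$, and for $r(T)=2$ the set $T$ is contained in a non-side rank-two flat of $N_{S,R}$, which by \Cref{def:matroid_N} has at most three elements of a few explicit types; using that $\rho\circ\varphi$ respects the defining relations whenever a three-element circuit is involved, a block-matrix calculation yields $\dim(A_T)=2c$. Once this identification is made, writing $A_{x^{(1)}}\cap A_T$ out via the three-block form reduces it to the solution space of a linear system that, by the structure of the regular representation, either forces $A_{x^{(1)}}\subseteq U_T$ (landing in the first alternative of well-separation) or can be identified with the kernel of a matrix of the form $\rho(h_1)-\rho(h_2)$ with $h_1\neq h_2\in G$; \Cref{lem:rank_reg_rep} then bounds this kernel by $n/2$ as required. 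The delicate part is the combinatorial case analysis needed to establish $U_T\cap U_B=A_T$ and the kernel-form reduction, both of which rely on the specific structure of $N_{S,R}$ and on $\varphi$ being a group homomorphism.
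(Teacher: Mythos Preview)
Your proposal is correct and amounts to the same argument as the paper's, reorganized around the pair of conditions $x^{(1)}\in\cl_M(T)$ and $C_M(T)\neq\emptyset$ rather than around the value of $r(T)$. The paper splits into the cases $r(T)=1,2,3$ and then, within $r(T)=2$, further distinguishes whether $T$ lies in a side, whether $T$ contains a bottom element, or whether $T\supseteq\{y^{(2)},z^{(3)}\}$; your use of \Cref{lemm:extension} together with fullness to get $U_T\cap U_B=A_{C_M(T)}$ absorbs the ``contained in a side'' and ``$T\subseteq B$'' subcases (and $r(T)=3$) into a single clean step, and your reduction of the remaining case to the kernel of $\rho(h_1)-\rho(h_2)$ is exactly what the paper does via its explicit block computation and \Cref{lem:rank_reg_rep}. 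One small point: in the $C_M(T)=\emptyset$, $r(T)=2$ case, be aware that $T$ may contain a vertex $b^{(i)}$ (e.g.\ $T=\{b^{(3)},y^{(1)}\}$), which is not literally one of the flat types listed in \Cref{def:matroid_N}; this does not affect the argument, but your phrase ``a few explicit types'' slightly understates the bookkeeping.
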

\begin{proof}
	Fix some $x\in S$ and $T\subseteq E_{S,R}$.
	We have to show $U_{T}\cap A_{x^{\left(1\right)}}$
	is either equal to $A_{x^{\left(1\right)}}$ or has dimension at most
	$\frac{1}{2}n$.
	We split this into cases based on the value of $r\left(T\right)$,
	noting that $U_{T}\cap A_{x^{\left(1\right)}}=U_{T}\cap U_{B}\cap A_{x^{\left(1\right)}}$,
	since $A_{x^{\left(1\right)}}\subseteq U_{B}=A_{B}$.
	\begin{description}
		\item[Case 1: $r\left(T\right)=1$] In this case $T=\left\{ t\right\} $ for some
		$t\in E_{S,R}$, and $U_{T}\cap U_{B}=A_{t}$. If $t$ is not a bottom
		element of $N_{S,R}$, then $\left\{ t,x^{\left(1\right)}\right\} $
		is full in $\mathcal{A}$ by \Cref{rmrk:triangle_reg}.
		It has rank $2$ in $N_{S,R}$, so 
		$
		\dim\left(A_{t}+A_{x^{\left(1\right)}}\right)=2n,
		$
		implying $A_{t}\cap A_{x^{\left(1\right)}}=0$. If $t$ is a bottom
		element of $N_{S,R}$, then by \cref{pro:G_weak_representation},
		\[
		A_{x^{\left(1\right)}}=\text{colspan}\begin{bsmallmatrix}-I_{n}\\
		\rho\left(\varphi\left(x\right)\right)\\
		0
		\end{bsmallmatrix},\quad A_{t}=\text{colspan}\begin{bsmallmatrix}-I_{n}\\
		P_{t}\\
		0
		\end{bsmallmatrix}
		\]
		respectively, where $\rho$ is the regular representation of $G$
		and $P_{t}$ is some matrix either of the form $\rho\left(\varphi\left(g\right)\right)$
		for some $g\in G$, or $0$ if $t\in B$.
		Thus by \cref{lem:block_matrices} (a),
		\[
		\dim\left(A_{x^{(1)}}+A_{t}\right)=n+\rk\left(\rho\left(\varphi\left(x\right)\right)-P_{t}\right)\ge\frac{3}{2}n
		\]
		in either case: if $P_{t}=0$ this is trivial, and otherwise
		it follows from \cref{lem:rank_reg_rep} since $n=|G|$.
		This implies $A_{x^{\left(1\right)}}\cap A_{t}$
		has dimension at most $\frac{1}{2}n$ as required.
		\item[Case 2: $r\left(T\right)=2$]
		If $x^{(1)} \in T$ then $A_{x^{\left(1\right)}} \subseteq U_T\cap U_B$ holds trivially.
		If $T$ is contained in a side $\ell$ of the triangle matroid $N_{S,R}$ then $U_T\cap U_B$ either contains $A_{x^{\left(1\right)}}$ if $\ell$ is the bottom side or it intersects $A_{x^{\left(1\right)}}$ trivially if $\ell$ is the left or right side of $N_{S,R}$.
		
		If $T$ is not contained in such a line, note that $U_{T}\cap U_{B}=U_{T}\cap A_{B}\supseteq A_{T}\cap A_{B}$.
		By \Cref{rmrk:triangle_reg}, $T$ is full in $\mathcal{A}$.
		Hence, $\dim\left(A_{T}\cap A_{B}\right)=\dim\left(A_{T}\right)=2n$.
		Since $\dim\left(U_{T}\cap U_{B}\right)=2n$, the containment implies equality: $U_{T}\cap U_{B}=A_{T}\cap A_{B}=A_{T}$.
		
		Now suppose $y^{(1)}\in T$ for some $y\in S$ with $x\neq y$.
		We allow $y^{\left(1\right)}$ to be $b^{\left(k\right)}$ for $k=1,2$ or $e^{\left(1\right)}$.
		Since $A_T$ intersects $A_{b^{(1)},b^{(2)}}$ in $A_{y^{(1)}}$ we have $A_{T}\cap A_{x^{\left(1\right)}}=A_{y^{\left(1\right)}}\cap A_{x^{\left(1\right)}}$
		Thus, we may reduce to the case $r\left(T\right)=1$ by taking $T'\coloneqq \left\{ y^{\left(1\right)}\right\} $.
		
		Lastly, assume $\left\{ y^{\left(2\right)},z^{\left(3\right)}\right\} \subseteq T$ for some $y,z\in S\cup\{e\}$.
		The intersection $A_{T}\cap A_{\left\{ b^{\left(1\right)},b^{\left(2\right)}\right\}} $ has dimension at most $n$, since the sum
		$
		A_{T}+A_{\left\{ b^{\left(1\right)},b^{\left(2\right)}\right\} }
		$
		is the entire space $A_{B}$, and has dimension $3n$ (where $\dim A_{T}=2n$).
		
		The block columns representing
		$y^{\left(2\right)},y^{\left(3\right)}$ in $\mathcal{A}$ are
		$
		\begin{bsmallmatrix}0\\
		-I_{n}\\
		\rho\left(g_{2}\right)
		\end{bsmallmatrix},\begin{bsmallmatrix}\rho\left(g_{3}\right)\\
		0\\
		-I_{n}
		\end{bsmallmatrix}
		$
		for some $g_{2},g_{3}\in G$ respectively where $\rho$ is the regular representation of $G$. 
		By \Cref{lem:block_matrices} b)	we see that the matrix
		\[
		\begin{bsmallmatrix}-I_{n} & 0 & \rho\left(g_{3}\right)\\
		\rho\left(g_{2}\right)^{-1}\rho\left(g_{3}\right)^{-1} & -I_{n} & 0\\
		0 & \rho\left(g_{2}\right) & -I_{n}
		\end{bsmallmatrix}
		\]
		has rank $2n$, where the first block column is in $A_{\left\{ b^{\left(1\right)},b^{\left(2\right)}\right\} }$.
		Thus, the intersection above is given by the block column span of the first block
		column.
		By the proof of the case $r\left(T\right)=1$ and the fact
		that $\rho\left(g_{2}\right)^{-1}\rho\left(g_{3}\right)^{-1}=\rho(\left(g_{3}g_{2}\right)^{-1}),			$
		we see
		\[
		A_{x^{\left(1\right)}}\cap\text{colspan}\begin{bsmallmatrix}-I_{n}\\
		\rho\left(g_{2}\right)^{-1}\rho\left(g_{3}\right)^{-1}\\
		0
		\end{bsmallmatrix}\le\frac{1}{2}n,
		\]
		since we can assume $\phi(x)\neq \left(g_{2}g_3\right)^{-1}$ (otherwise we would be in the previous case). This implies the claim.
		\item[Case 3: $r\left(T\right)=3$] In this case $U_{T}\cap U_{B}$ has dimension $3n$ since $T$ is full in $\mathcal{U}$.
		Using the fact $\dim\left(U_{B}\right)=3n$ we obtain $U_{T}\supseteq U_{B}\supseteq A_{x^{\left(1\right)}}$ as desired.\qedhere
	\end{description}
\end{proof}

\subsection{Forcing an Inequality}\label{sec:inequality}

\begin{defn}
	The \emph{double} of a subspace arrangement $\mathcal{U}=\left\{ U_{e}\right\} _{e\in E}$
	in a vector space $V$ is the arrangement $\mathcal{W}=\left\{ W_{e}\right\} _{e\in E}$
	in $V\oplus V$, where
	\[
	W_{e}\coloneqq U_{e}\oplus U_{e}.
	\]
	Note that in this situation, $r_{\mathcal{W}}^{2c}=r_{\mathcal{U}}^{c}$ and $r_{\mathcal{W}}^{c}=2r_{\mathcal{U}}^{c}$.
	Hence if $r_{\U}^c$ represents some polymatroid so does $r^{2c}_{\mathcal{W}}$.  

	When $\U$ represents some polymatroid $c\cdot g$, or extends a weak $c$-representation $\A$ of some matroid $M$, it will be convenient to think of $\mathcal{W}$ not just as a double of $\U$, but also as an arrangement representing $2c\cdot g$ or an extension of the weak $2c$-representation $\A\oplus\A$ of $M$.
	
	The point is that non-integer multiples of $c$ are not accessible via the combinatorial rank function, but to use the tools of the previous section we need to work with subspaces of dimension $\frac{c}{2}$. Doubling a $c$-arrangement, and considering it as a $c'=2c$-representation of the same polymatroid, makes $\frac{c'}{2}=c$ the ``basic unit of measurement'' (while also guaranteeing that it is an integer, i.e. that $c'$ is even).
\end{defn}

\textbf{Notation.} We use the following notation in the rest of this section:

Let $M=\left(E,r\right)$ be a triangle matroid
	with respect to the distinguished basis $B=\left\{ b^{\left(1\right)},b^{\left(2\right)},b^{\left(3\right)}\right\} $,
	and let $\mathcal{W}=\left\{ W_{e}\right\} _{e\in E}$ be a $2c$-admissible extension
	of the weak $2c$-representation $\mathcal{A}=\left\{ A_{e}\right\} _{e\in E}$
	of $M$. Suppose $x\in E$ is in the line $\ell$ of $M$ spanned by $\left\{ b^{\left(1\right)},b^{\left(2\right)}\right\} $, i.e. the bottom line of $M$.
\begin{defn}\label{defn:sep_basis}	
	Let 
	\[
	\mathcal{W}^{\mathcal{B}} \coloneqq \left\{ W_{e,i}\right\} _{\substack{e\in E,\\
			1\le i\le d_{e}
		}
	}
	\]
	be a $c$-basis for $\mathcal{W}$, where $d_{e}\coloneqq \frac{1}{c}\dim W_{e}$
	for each $e\in E$. 
	
	Let $y\in E$ be an element on the bottom line of $M$.
	Suppose $\mathcal{W}^{\mathcal{B}}$ has the following property: $W_{x,1}\subseteq W_{b^{\left(1\right)}}+W_{b^{\left(2\right)}}$,
	and $W_{x,1}\cap W_{y}=0$. In this situation, we say $\mathcal{W}^{\mathcal{B}}$
	\emph{separates $x$ from $y$.}
\end{defn}

Note that by construction each $d_e$  is an even number and and the subspaces $W_{e,i}$ are generic $c$-dimensional subspaces of $W_e$ for each $e\in E$.

The following proposition implies that separating a pair of elements is a combinatorial property, depending only on the rank function. It also describes the main consequence: in the weak $c$-arrangement $\A$ corresponding to $\mathcal{W}$, the subspaces corresponding to $x$ and to $y$ are distinct.

\begin{prop}\label{prop:comb_separation}
\

	\begin{enumerate}
		\item If the arrangement $\mathcal{W}$ has a $c$-basis $\mathcal{W}^{\mathcal{B}}$
		which separates $x$ from $y$ then $A_{x}\neq A_{y}$. 
		\item A $c$-basis $\mathcal{W}^{\mathcal{B}}$ separates $x$ from $y$
		if and only if the following two conditions hold:
		\[
		r_{\mathcal{W}^{\mathcal{B}}}^{c}\left(\left\{ \left(x,1\right)\right\} \cup\left\{ \left(b^{\left(i\right)},j\right)\right\} _{1\le i,j\le2}\right)=4,
		\]
		\[
		r_{\mathcal{W}^{\mathcal{B}}}^{c}\left(\left\{ \left(x,1\right)\right\} \cup\left\{ \left(y,i\right)\right\} _{1\le i\le d_{y}}\right)=d_{y}+1.
		\]
	\end{enumerate}
\end{prop}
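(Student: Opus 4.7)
The plan is to prove both parts by unpacking the definitions and using dimension arithmetic together with the fact that elements of $B$ satisfy $W_b = A_b$ (a consequence of $\mathcal{W}$ being an extension and of $\A$ being a weak $2c$-representation, applied with $T=D=\{b\}$ in \Cref{eq:reg}).

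For part (2), I would argue as follows. Since $\mathcal{W}^{\mathcal{B}}$ is a $c$-basis, the subspaces $W_{b^{(1)},1},W_{b^{(1)},2},W_{b^{(2)},1},W_{b^{(2)},2}$ sum to $W_{b^{(1)}}+W_{b^{(2)}} = A_{b^{(1)}}+A_{b^{(2)}}$, which has dimension $4c$ because $b^{(1)},b^{(2)}$ are independent in the weak $2c$-representation. Adding the $c$-dimensional subspace $W_{x,1}$ gives either dimension $4c$ or $5c$ (both summed and individual dimensions are multiples of $c$ by $c$-admissibility of $\mathcal{W}^{\mathcal{B}}$). Thus the first rank condition holds if and only if $W_{x,1}\subseteq W_{b^{(1)}}+W_{b^{(2)}}$. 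Similarly, $\sum_{i=1}^{d_y}W_{y,i} = W_y$ has dimension $cd_y$, and the second rank condition says $\dim(W_{x,1}+W_y)=(d_y+1)c$, which by the dimension formula is equivalent to $W_{x,1}\cap W_y=0$. Combining these gives exactly the two defining conditions of separation.

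For part (1), the key point is to upgrade the inclusion $W_{x,1}\subseteq W_{b^{(1)}}+W_{b^{(2)}}$ to $W_{x,1}\subseteq A_x$. Applying the extension inequality \Cref{eq:reg} with $T=\{x\}$ and $D=\{b^{(1)},b^{(2)}\}$ gives $\dim(W_x\cap(W_{b^{(1)}}+W_{b^{(2)}}))\le 2c(r(\{x\})+r(D)-r(\{x\}\cup D))=2c$, since $x$ lies on the bottom line and therefore $r(\{x\}\cup D)=2$. The reverse inclusion $A_x\subseteq W_x\cap(W_{b^{(1)}}+W_{b^{(2)}})$ holds because $A_x\subseteq W_x$ and $A_x\subseteq A_B=W_B$, forcing equality $W_x\cap(W_{b^{(1)}}+W_{b^{(2)}})=A_x$. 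Since $W_{x,1}\subseteq W_x$ by being part of a $c$-basis, the separation condition $W_{x,1}\subseteq W_{b^{(1)}}+W_{b^{(2)}}$ then yields $W_{x,1}\subseteq A_x$.

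To finish part (1), suppose for contradiction that $A_x=A_y$. Then $W_{x,1}\subseteq A_x=A_y\subseteq W_y$, giving $W_{x,1}\cap W_y=W_{x,1}\neq 0$, which contradicts the second condition of separation. The main obstacle is essentially bookkeeping: confirming that $W_b=A_b$ for $b\in B$ and extracting the dimension of $W_x\cap(W_{b^{(1)}}+W_{b^{(2)}})$ correctly from the extension inequality. Both reduce to careful application of the definitions in \Cref{def:extension,def:expansion}.
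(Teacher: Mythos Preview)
Your argument for part (2) is correct and essentially identical to the paper's.

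For part (1), your overall strategy works but there is a small slip and an unnecessary detour. The slip: to conclude $A_x\subseteq W_x\cap(W_{b^{(1)}}+W_{b^{(2)}})$ you write ``$A_x\subseteq A_B=W_B$'', but $W_B$ is strictly larger than $W_{b^{(1)}}+W_{b^{(2)}}$, so this does not give the required containment. What you actually need is $A_x\subseteq W_{b^{(1)}}+W_{b^{(2)}}$, which follows either from \Cref{lemm:extension} (with $S=\{x\}$, $C_M(S)=\{b^{(1)},b^{(2)}\}$) or directly from the weak-representation equality $r^{2c}_{\A}(\{x,b^{(1)},b^{(2)}\})=r(\{x,b^{(1)},b^{(2)}\})=2$.

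The detour: you invoke the extension inequality \eqref{eq:reg} to bound $\dim(W_x\cap(W_{b^{(1)}}+W_{b^{(2)}}))$ and then squeeze to get equality with $A_x$. The paper avoids all of this. Since $W_{x,1}\subseteq W_{b^{(1)}}+W_{b^{(2)}}\subseteq W_B$ (from the separation hypothesis) and $W_{x,1}\subseteq W_x$ (from the $c$-basis property), one has immediately
\[
W_{x,1}\subseteq W_x\cap W_B = A_x,
\]
using only the definition $A_e=W_e\cap W_B$. No appeal to \eqref{eq:reg} or to a dimension count is needed. From here both arguments finish the same way: $W_{x,1}\not\subseteq W_y\supseteq A_y$, hence $A_x\neq A_y$.
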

\begin{proof}
\	
	\begin{enumerate}
		\item Suppose the $c$-basis $\mathcal{W}^{\mathcal{B}}$ of $\mathcal{W}$
		separates $x$ and $y$. Then 
		\[
		W_{x,1}\subseteq W_{x}\cap W_{B}=A_x
		\]
		is not contained in $W_{y}\supseteq A_{y}$. Therefore
		$A_{x}$ is also not contained in $A_{y}$,
		and $A_{x}\neq A_{y}$.
		\item The condition $W_{x,1}\subseteq W_{b^{\left(1\right)}}+W_{b^{\left(2\right)}}$
		is equivalent to the equation 
		\[
		W_{b^{\left(1\right)}}+W_{b^{\left(2\right)}}=W_{x,1}+W_{b^{\left(1\right)}}+W_{b^{\left(2\right)}},
		\]
		and this occurs if and only if both sides have the same dimension,
		which is $4c$ by construction.
		
		The condition $W_{x,1}\cap W_{y}=0$ is equivalent to the equation
		$\dim\left(W_{x,1}+W_{y}\right)=\dim\left(W_{x,1}\right)+\dim\left(W_{y}\right)$,
		and by construction the right-hand side equals $cd_{y}+c$.\qedhere
	\end{enumerate}
\end{proof}
\textbf{Construction.} Suppose $\mathcal{W}$ is well-separated with respect to some $x\in E$. We
construct the following $c$-basis, which as we will show separates
$x$ from any $y\in\ell$ such that $A_{x}\neq A_{y}$:
\begin{enumerate}
	\item For each $z\in E\backslash\left\{ x\right\} $, let $\left\{ W_{z,1},\ldots,W_{z,d_{z}}\right\} $
	be generic $c$-dimensional subspaces of $W_{z}$.
	\item Let $W_{x,1}$ be a generic $c$-dimensional subspace of $W_{x}\cap W_{\left\{ b^{\left(1\right)},b^{\left(2\right)}\right\} }=A_x.$
	\item Let $\left\{ W_{x,2},\ldots,W_{x,d_{x}}\right\} $ be generic $c$-dimensional
	subspaces of $W_{x}$.
\end{enumerate}
The collection of all these subspaces will be denoted $\mathcal{W}^{\mathcal{B},x}$.
\begin{lemm}\label{lemm:exists_sep_basis1}
	$\mathcal{W}^{\mathcal{B},x}$ is a $c$-basis of $\mathcal{W}$.
\end{lemm}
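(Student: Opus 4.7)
The claim reduces to three verifications: $c$-homogeneity, the spanning relations $W_e = \sum_{i=1}^{d_e} W_{e,i}$, and $c$-admissibility. Homogeneity is immediate, since every $W_{e,i}$ is constructed with dimension $c$. For the spanning relations, the case $e\ne x$ is the standard fact that $d_e$ generic $c$-dimensional subspaces of a $cd_e$-dimensional space span it. The case $e=x$ requires slightly more care: since $\mathcal{W}$ is $2c$-admissible, $d_x=\tfrac{1}{c}\dim W_x$ is even and so $d_x\ge 2$; the subspaces $W_{x,2},\dots,W_{x,d_x}$ generically sum to a $(d_x-1)c$-dimensional subspace $\Sigma\subseteq W_x$, which by a generic intersection argument meets the $2c$-dimensional subspace $A_x$ in a subspace of dimension exactly $c$. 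A further generic choice of the $c$-dimensional subspace $W_{x,1}\subseteq A_x$ then forces $W_{x,1}\cap\Sigma = 0$, so $\sum_{i=1}^{d_x}W_{x,i}$ has dimension $cd_x=\dim W_x$ and hence equals $W_x$.

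The main task is to verify $c$-admissibility, which I would carry out in two steps. Step~1 shows that the enlarged arrangement $\mathcal{W}\cup\{W_{x,1}\}$ is already $c$-admissible. Given $\mathcal{S}\subseteq\mathcal{W}$ with $W_\mathcal{S}\coloneqq \sum_{U\in\mathcal{S}}U$, the well-separated hypothesis (\Cref{prop:well_separated}) gives a dichotomy: either $A_x\subseteq W_\mathcal{S}$, in which case $W_{x,1}\subseteq A_x\subseteq W_\mathcal{S}$ and the sum is unchanged, or $\dim(A_x\cap W_\mathcal{S})\le c$. In the latter case, since $W_{x,1}$ is a generic $c$-dimensional subspace of the $2c$-dimensional $A_x$ and $c + \dim(A_x\cap W_\mathcal{S}) \le 2c = \dim A_x$, genericity yields $W_{x,1}\cap W_\mathcal{S}=W_{x,1}\cap A_x\cap W_\mathcal{S}=0$, so $\dim(W_{x,1}+W_\mathcal{S})=c+\dim W_\mathcal{S}$, which is a multiple of $c$ (indeed of $2c$). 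Step~2 applies the Splitting Lemma (\Cref{lemm:splitting}) to the $c$-admissible arrangement $\mathcal{W}\cup\{W_{x,1}\}$: the remaining $W_{e,i}$ are generic $c$-dimensional subspaces of members $W_e$, so the total arrangement is $c$-admissible, and $\mathcal{W}^{\mathcal{B},x}$ is contained in it.

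The essential obstacle is Step~1. The admissibility of $\mathcal{W}$ alone only guarantees divisibility of dimensions by $2c$; a generic $c$-dimensional subspace of $A_x$ could, in principle, meet some $W_\mathcal{S}$ in a subspace of non-$c$-divisible dimension and spoil admissibility. The well-separated bound $\dim(A_x\cap W_\mathcal{S})\le c$ is precisely what forces the generic intersection $W_{x,1}\cap W_\mathcal{S}$ to be trivial and preserves $c$-admissibility.
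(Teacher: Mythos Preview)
Your proof is correct and follows essentially the same approach as the paper: show that $\mathcal{W}\cup\{W_{x,1}\}$ is $c$-admissible using well-separatedness, then apply the Splitting Lemma. You even supply more detail than the paper on the spanning condition at $e=x$. One small slip: the parenthetical ``(indeed of $2c$)'' is false, since $c+\dim W_{\mathcal{S}}$ is an \emph{odd} multiple of $c$ when $\dim W_{\mathcal{S}}\in 2c\mathbb{N}$; but only $c$-divisibility is needed, so this does not affect the argument.
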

\begin{proof}
	We need to show $\mathcal{W}^{\mathcal{B},x}$ is a $c$-arrangement
	and that $\sum_{i=1}^{d_{z}}W_{e,i}=W_{e}$ for each $e\in E$. The second
	part is clear from genericity. Let us prove $\mathcal{W}^{\mathcal{B},x}$
	is a $c$-arrangement. 
	
	By construction, the subspaces $\mathcal{W}_{z,i}^{\mathcal{B},x}$
	(for any $\left(z,i\right)\neq\left(x,1\right)$) are generic subspaces
	of the subspaces $W_{z}$, each of dimension $c$. Also, $\left\{ W_{z}\right\} _{z\in E}\cup\left\{ W_{x,1}\right\} $
	is a $c$-admissible subspace arrangement, essentially since $\mathcal{W}$ is well-separated:
	If $T\subseteq E$ is any subset, then $W_{T}\cap A_{x}$
	is either of dimension at most $c$ or $A_{x}\subseteq W_{T}$.
	In the first case, $W_{T}$ intersects $A_{x}$ in dimension
	at most $c$, and since $W_{x,1}$ is generic of dimension $c$ in
	$A_{x}$ it satisfies $W_{x,1}\cap W_{T}=0$. In the second
	case, $A_{x}\subseteq W_{T}$ so also $W_{x,1}\subseteq W_{T}$,
	and $W_{x,1}+W_{T}=W_{T}$ again has dimension a multiple of $c$.
	Hence $\mathcal{W}\cup\left\{ W_{x,1}\right\} $ is $c$-admissible
	and the claim follows from the Splitting Lemma \ref{lemm:splitting}: $\mathcal{W}^{\mathcal{B},x}$
	is a subspace arrangement comprised of some generic $c$-dimensional
	subspaces of the $\left\{ W_{z}\right\} _{z\in E}$ and of $W_{x,1}$
	(note that any $c$-dimensional subspace of $W_{x,1}$ is equal to $W_{x,1}$).
\end{proof}

\begin{prop}\label{prop:sep_basis}
	In the notation of the construction, $\mathcal{W}^{\mathcal{B},x}$
	separates $x$ from $y$ for any $y\in\ell$ such that $A_{x}\neq A_{y}$.
\end{prop}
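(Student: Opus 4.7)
The plan is to verify the two conditions of \Cref{defn:sep_basis} for the $c$-basis $\mathcal{W}^{\mathcal{B},x}$ produced by the construction. The first condition, $W_{x,1}\subseteq W_{b^{(1)}}+W_{b^{(2)}}$, is immediate from the construction, since $W_{x,1}$ was chosen as a subspace of $W_x\cap W_{\{b^{(1)},b^{(2)}\}}\subseteq W_{b^{(1)}}+W_{b^{(2)}}$.

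The substantive step is to verify that $W_{x,1}\cap W_y=0$ for each $y\in\ell$ with $A_x\neq A_y$. My approach is first to establish the bound $\dim(A_x\cap W_y)\le c$. Applying the well-separatedness of $\mathcal{W}$ with $T=\{y\}$ gives either $A_x\subseteq W_y$, or $\dim(A_x\cap W_y)\le \frac{1}{2}(2c)=c$ (recall that $\mathcal{A}$ here is a weak $2c$-representation, so the threshold in the definition of well-separatedness becomes $c$). The first alternative can be ruled out: combining $A_x\subseteq W_y$ with $A_x\subseteq W_B$ yields $A_x\subseteq W_y\cap W_B=A_y$, and since both subspaces have dimension $2c$ this would force $A_x=A_y$, contradicting the hypothesis on $y$.

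Once $\dim(A_x\cap W_y)\le c$ is in hand, the conclusion follows from the genericity of $W_{x,1}$: since $W_{x,1}\subseteq A_x$, we have $W_{x,1}\cap W_y=W_{x,1}\cap(A_x\cap W_y)$, and a generic $c$-dimensional subspace of the $2c$-dimensional space $A_x$ meets any subspace of dimension at most $c$ trivially by the usual dimension count. As there are only finitely many candidates for $y$, a single generic choice of $W_{x,1}$ satisfies all the required conditions simultaneously.

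The main obstacle is just recognizing which piece of the hypotheses does the work: well-separatedness was set up precisely to deliver the bound $\dim(A_x\cap W_y)\le c$ needed for a generic $c$-subspace of $A_x$ to avoid $W_y$, and the only genuine input beyond this is using $A_x=W_x\cap W_B$ to reduce the containment case to the equality $A_x=A_y$. Everything else is dimension counting.
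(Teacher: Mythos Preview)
Your proof is correct and follows essentially the same approach as the paper's: both rule out $A_x\subseteq W_y$ by observing it would give $A_x\subseteq W_y\cap W_B=A_y$ and hence $A_x=A_y$, then invoke well-separatedness (with the $2c$-threshold becoming $c$) to bound $\dim(A_x\cap W_y)\le c$, and finally use genericity of $W_{x,1}$ inside the $2c$-dimensional $A_x$ to conclude $W_{x,1}\cap W_y=0$. The only cosmetic difference is that you verify the subspace conditions of \Cref{defn:sep_basis} directly, whereas the paper checks the equivalent rank equalities from \Cref{prop:comb_separation}.
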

\begin{proof}
	The equality
	\[
	r_{\mathcal{W}^{\mathcal{B},x}}^{c}\left(\left\{ \left(x,1\right)\right\} \cup\left\{ \left(b^{\left(i\right)},j\right)\right\} _{1\le i,j\le2}\right)=4
	\]
	is clear from the construction as $W_{x,1}$ is contained in the $4c$-dimensional subspace
	\[
	W_{b^{\left(1\right)}}+W_{b^{\left(2\right)}}=\sum_{i,j=1}^{2}W_{b^{\left(i\right)},j}.
	\]
	Suppose $A_{x}\neq A_{y}$. Then $A_{x}\not\subseteq W_{y}$, as otherwise
	\[
	A_{y}=W_{y}\cap W_{B}=\left(W_{y}+A_{x}\right)\cap W_{B}\supseteq A_{x}
	\]
	gives a contradiction. Hence $A_{x}\cap W_{y}$ has dimension at most
	$c$ by well-separatedness.
	As $W_{x,1}\subseteq A_{x}$ is generic of dimension $c$, it
	has trivial intersection with $W_{y}$. This proves that
	\begin{align*}
	r_{\mathcal{W}^{\mathcal{B},x}}^{c}\left(\left\{ \left(x,1\right)\right\} \cup\left\{ \left(y,i\right)\right\} _{1\le i\le d_{y}}\right) &=\frac{1}{c}\dim\left(W_{x,1}+W_{y}\right) \\
	&=\frac{1}{c}\left(\dim\left(W_{x,1}\right)+\dim W_{y}-\dim\left(W_{x,1}\cap W_{y}\right)\right)
	\end{align*}
	equals $\frac{1}{c}\left(\dim\left(W_{x,1}\right)+\dim\left(W_{y}\right)\right)=1+d_{y}$.
	Therefore by \Cref{prop:comb_separation}, the $c$-basis~$\mathcal{W}^{\mathcal{B},x}$
	separates $x$ from $y$.
\end{proof}

To later use the results of this section independently of the specific notation introduced above, we encode the second part of \cref{prop:comb_separation} in a definition and its first part in a lemma.
\begin{defn}
	Let $M=(E,r)$ be a triangle matroid with distinguished basis $B=\{b^{(1)},b^{(2)},b^{(3)}\}$, and let $g$ be a polymatroid extension of $M$.
	Let $x\in E$ be a bottom element of $M$, i.e. one which lies in the flat spanned by $\{b^{(1)},b^{(2)}\}$. We will say a polymatroid expansion (in the sense of~\Cref{def:expansion} c))
	$\left(\{(e,i)\}_{\substack{e\in E, \\ 1\le i\le 2g(e)}} , r_{\text{exp}} \right)$
	of the doubled polymatroid $2g$ separates $x$ from $y$ if
	\begin{align*}
	r_\text{exp}\left(\left\{ \left(x,1\right)\right\} \cup\left\{ \left(b^{\left(i\right)},j\right)\right\} _{1\le i,j\le2}\right)&=4 \mbox{ and }\\
	r_\text{exp}\left(\left\{ \left(x,1\right)\right\} \cup\left\{ \left(y,i\right)\right\} _{1\le i\le 2g(y)}\right)&=2g(y)+1.
	\end{align*}
\end{defn}

\begin{lemm}
	Let $M=(E,r)$ be a triangle matroid with distinguished basis $B=\{b^{(1)},b^{(2)},b^{(3)}\}$, and let $g$ be a polymatroid extending $M$. Suppose an expansion $N$ of $2g$ separates $x$ from $y$. Given a $c$-arrangement representing $N$, let $\mathcal{W}$ be the corresponding subspace arrangement representing $2c\cdot g$, and let $\A=\{A_e\}_{e\in E}$ be the corresponding weak $2c$-representation of $M$ (as constructed in \cref{thm:intersection_thm}).  Then $A_x \neq A_y$.
\end{lemm}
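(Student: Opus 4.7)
The plan is to mirror the reasoning of \Cref{prop:comb_separation}(1), but starting from an abstract expansion rather than from the concrete doubled arrangement. First I would unpack the data. The $c$-arrangement representing $N$ is a family $\{W_{e,i}\}_{e\in E,\,1\le i\le 2g(e)}$ of $c$-dimensional subspaces, and setting $W_e\coloneqq\sum_i W_{e,i}$ produces, by \Cref{lemm:c_basis_expansion1}, a subspace arrangement with $\dim W_e=2cg(e)$ representing the polymatroid $2c\cdot g$. By \Cref{thm:intersection_thm} the subspaces $A_e\coloneqq W_e\cap W_B$ then form the weak $2c$-representation $\mathcal{A}$ of $M$ appearing in the statement.

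Next I would translate the two separating rank conditions into linear-algebraic facts about the $W_{e,i}$. Since $g(\{b^{(1)},b^{(2)}\})=r(\{b^{(1)},b^{(2)}\})=2$, the four subspaces $\{W_{b^{(i)},j}\}_{1\le i,j\le 2}$ span $W_{b^{(1)}}+W_{b^{(2)}}$, which has dimension $4c$. The first separating condition says that adjoining $W_{x,1}$ does not increase this dimension, so $W_{x,1}\subseteq W_{b^{(1)}}+W_{b^{(2)}}$. The second condition asserts $\dim(W_{x,1}+W_y)=c(2g(y)+1)=\dim(W_{x,1})+\dim(W_y)$, whence $W_{x,1}\cap W_y=0$.

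Finally, because $W_{x,1}\subseteq W_x$ by the defining property of a $c$-basis and $W_{x,1}\subseteq W_{b^{(1)}}+W_{b^{(2)}}\subseteq W_B$, I obtain $W_{x,1}\subseteq W_x\cap W_B=A_x$. If one assumed $A_x=A_y$, then $W_{x,1}\subseteq A_y\subseteq W_y$, contradicting $W_{x,1}\cap W_y=0$ since $\dim W_{x,1}=c\ge 1$; hence $A_x\neq A_y$. I do not anticipate any substantive obstacle: the lemma is the abstract counterpart of \Cref{prop:comb_separation}(1), and the only additional work is the bookkeeping that packages the data of a $c$-arrangement representing the expansion $N$ into a subspace arrangement representing $2c\cdot g$ via \Cref{lemm:c_basis_expansion1} and then into a weak $2c$-representation via \Cref{thm:intersection_thm}.
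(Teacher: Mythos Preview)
Your proof is correct and follows exactly the approach the paper takes: the paper simply states that the lemma is a direct consequence of \Cref{prop:comb_separation}, and your argument spells out precisely that deduction (reconstructing $\mathcal{W}$ via \Cref{lemm:c_basis_expansion1}, reading the two rank conditions as $W_{x,1}\subseteq W_{b^{(1)}}+W_{b^{(2)}}$ and $W_{x,1}\cap W_y=0$, and then concluding as in \Cref{prop:comb_separation}(1)).
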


This is a direct consequence of \cref{prop:comb_separation}.

The next proposition is used in the theorem which follows. The theorem puts together several ideas from this and preceding sections.

\begin{prop}\label{prop:sec_8}
	Let $M=(E,r)=N_{S,R}$ be a triangle matroid constructed from a group presentation $\langle S \mid R\rangle$ as in \cref{def:matroid_N}. Denote its distinguished basis by $B$. Let $x,y\in E \setminus B$ be elements on the bottom line of $M$ Let $g$ be the polymatroid extending $M$ constructed in \cref{thm:regularization}.
	
	Suppose there exists a weak $c$-representation $\A = \{A_e\}_{e\in E}$ of $M$ such that $A_x \neq A_y$. Then there exists a $c$-arrangement representation of some expansion of $2g$ which separates $x$ from $y$.
\end{prop}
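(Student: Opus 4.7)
The plan is to bring the hypothesis into the setting of \Cref{prop:well_separated}, then to apply the inflation procedure of \Cref{thm:regularization}, and finally to invoke the separating $c$-basis construction from \Cref{sec:inequality}.

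First I would pass from the abstract weak $c$-representation $\A$ to one coming from a finite group. Applying \Cref{pro:weak_representation_G} yields a finitely generated subgroup $G_\A \subseteq GL_c(\F)$ and a homomorphism $\phi_\A : G_{S,R} \to G_\A$. In the normalized form produced there, the subspace $A_z$ at a non-basis bottom element $z$ is the graph of a specific element of $G_\A$ ($I$ if $z = e^{(1)}$, $\phi_\A(a)$ if $z = a^{(1)}$, and $\phi_\A(a)^{-1}$ if $z = a^{-1(1)}$), so $A_x \neq A_y$ translates to the distinctness of two elements of $G_\A$. Malcev's theorem then supplies a homomorphism $\psi : G_\A \to H$ to a finite group $H$ which still distinguishes them. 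Setting $n \coloneqq |H|$ and $\A' \coloneqq \A_{H, \psi \circ \phi_\A}$, \Cref{pro:G_weak_representation} gives a weak $n$-representation of $N_{S,R}$, and since the regular representation is faithful, $A'_x \neq A'_y$.

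Next I would apply \Cref{thm:regularization} to $\A'$ to obtain a subspace arrangement $\U=\{U_e\}_{e\in E}$ representing $n \cdot g$ and extending $\A'$, so that $U_e \cap U_B = A'_e$ for every $e \in E$. Condition \eqref{eq:partial} applied with $C = B$, together with the identities $g(B)=r(B)$ and $r(S\cup B) = r(B)$, forces $\dim(U_S \cap U_B) = n\cdot r(S)$ for every $S \subseteq E$; i.e.\ every subset is full in $\U$. Then \Cref{prop:well_separated} (whose proof extends verbatim to the remaining non-basis bottom elements $e^{(1)}$ and $a^{-1(1)}$, since these correspond to group elements of $H$ whose regular-representation matrices are still subject to \Cref{lem:rank_reg_rep}) shows $\U$ is well-separated with respect to $x$. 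Doubling $\U$ produces a $2n$-admissible arrangement $\mathcal{W}$ representing $2n\cdot g$ and extending the weak $2n$-representation $\A'\oplus\A'$ of $M$; the subspaces for $x,y$ remain distinct and well-separatedness is preserved since $\dim((A'_x\oplus A'_x)\cap \mathcal{W}_T) = 2\dim(A'_x\cap U_T)$ is either equal to $\dim(A'_x\oplus A'_x)$ or at most $n = \tfrac{1}{2}\cdot 2n$.

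The construction preceding \Cref{lemm:exists_sep_basis1} (with the role of $c$ there played by $n$) applied to $\mathcal{W}$ now produces an $n$-basis $\mathcal{W}^{\mathcal{B},x}$, which by \Cref{lemm:exists_sep_basis1} is a genuine $n$-arrangement; by \Cref{prop:sep_basis} it separates $x$ from $y$. The combinatorial type of $\mathcal{W}^{\mathcal{B},x}$ has ground set $\{(e,i) : e\in E,\ 1 \le i \le 2g(e)\}$, and since $\sum_i W^{\mathcal{B},x}_{e,i}=\mathcal{W}_e$ it satisfies $\tfrac{1}{n}\dim(\mathcal{W}_S) = 2g(S)$ for every $S \subseteq E$, so this matroid is an expansion of $2g$ as required. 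The substantive input is Malcev's theorem, which allows the inequality $A_x \neq A_y$ to be preserved when passing to a finite quotient where \Cref{prop:well_separated} can be applied; the remaining steps are a formal assembly of the already-established constructions.
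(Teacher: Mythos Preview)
Your proof is correct and follows essentially the same route as the paper: pass to a finite quotient via Malcev, rebuild the weak representation from the regular representation, inflate via \Cref{thm:regularization}, and then take a separating $c$-basis of the double. You are in fact slightly more careful than the paper in two places---you verify explicitly that every subset of $\U$ is full (needed for \Cref{prop:well_separated}) and you note that \Cref{prop:well_separated} as stated only covers $x^{(1)}$ for $x\in S$, whereas here $x$ may also be $e^{(1)}$ or $a^{-1(1)}$---both are genuine small gaps in the paper's proof that your remarks fill.
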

\begin{proof}
	By \cref{pro:weak_representation_G}, there is a finitely generated matrix group $G_{\A}$ and a homomorphism $\phi:G_{S,R}\rightarrow G_{\A}$ corresponding to the weak $c$-representation $\A$. By construction, the elements $x,y \in E \setminus B$ correspond to two elements of $G_{S,R},$ and since $A_x \neq A_y$ their images in $G_{\A}$ are distinct.
	Using Malcev's theorem, $G_{\A}$ has a finite quotient $G'_{\A}$ in which the images of the elements corresponding to $x,y$ are distinct.
	Replace $G_{\A}$ by $G'_{\A}$, and the homomorphism $\phi$ by its composition with the quotient map. 
	
	Define $\A' = \A_{G_{\A},\phi} = \{A'_e\}_{e\in E}$ as in \cref{pro:G_weak_representation}, and note that also $A'_x \neq A'_y$. Let $\U$ be the extension of $\A'$ constructed in \cref{thm:regularization}. The arrangement $\U$ is well-separated with respect to $x$ by \cref{prop:well_separated}.
	
	By the construction of this section and its properties shown in \cref{lemm:exists_sep_basis1} and \cref{prop:sep_basis}, the double $\mathcal{W}$ of $\U$ has a separating $c$-basis; this basis is, by definition, a $c$-arrangement representing an expansion of $2g$ which separates $x$ and $y$.
\end{proof}

\begin{theorem}\label{thm:sec_8}
	Let $M=N_{S,R}=(E,r)$ be a triangle matroid constructed from a finite presentation $\langle S \mid R\rangle$ as in \cref{def:matroid_N}. Denote its distinguished basis by $B$, and let $x,y\in E$ be elements on the bottom line of $M$. Then there exists a finite set of matroids $\{M_i\}_{i=1}^n$ such that $M$ has a weak $c$-representation $\A = \{A_e\}_{e\in E}$ with $A_x \neq A_y$ for some $c$ if and only if there is at least one $1\le i\le n$ such that $M_i$ is representable as a $c'$-arrangement for some $c'\ge 1$.
\end{theorem}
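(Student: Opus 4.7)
The plan is to take $\{M_i\}$ to be exactly the set of expansions of the doubled polymatroid $2g$ that combinatorially separate $x$ from $y$, where $g$ is the canonical polymatroid extending $M$ produced in \cref{thm:regularization}. Since \cref{lemm:c_basis_expansion2} guarantees that $2g$ has only finitely many expansions (all weak images of the free expansion $\mathscr{F}(2g)$), and since the separation condition from \cref{defn:sep_basis} is a combinatorial condition on the rank function of an expansion (it only involves specific evaluations of $r_{\text{exp}}$), the set $\{M_i\}$ is finite.

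For the forward direction, suppose $M$ has a weak $c$-representation $\A$ with $A_x\neq A_y$. This is exactly the hypothesis of \cref{prop:sec_8}, which yields a $c$-arrangement representing some expansion $N$ of $2g$ that separates $x$ from $y$. By construction of $\{M_i\}$ we have $N=M_i$ for some $i$, so $M_i$ is representable as a $c$-arrangement.

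For the backward direction, suppose $M_i$ admits a $c'$-arrangement representation $\mathcal{W}$. By \cref{lemm:c_basis_expansion1}, the subspaces $U_e\coloneqq\sum_i W_{e,i}$ assemble into a subspace arrangement $\U$ representing $c'\cdot(2g)=(2c')\cdot g$. Since $g$ extends $M$, \cref{thm:intersection_thm} applies: the induced arrangement $\A=\{U_e\cap U_B\}_{e\in E}$ is a weak $2c'$-representation of $M$. Finally, by the lemma stated at the end of \cref{sec:inequality} (the combinatorial-to-geometric separation statement), because $M_i$ separates $x$ from $y$ we obtain $A_x\neq A_y$, as required.

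The conceptual work has already been done in the preceding sections; the only real content here is assembling these pieces in the correct order. The main subtlety, and the reason doubling is necessary, is that the separating $c$-basis constructed in \cref{sec:inequality} uses a subspace $W_{x,1}$ of dimension half that of $A_x$, so the ambient $c$-arrangements in the construction live over $2c'\cdot g$ rather than $c'\cdot g$; this is what forces us to consider expansions of $2g$ instead of $g$. Care is needed in the backward direction to note that the arrangement $\A$ produced by \cref{thm:intersection_thm} is a weak $2c'$-representation of $M$ (not a $c'$-representation), but since the statement of the theorem allows the ``$c$'' in the two clauses to differ, this causes no issue.
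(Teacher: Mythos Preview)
Your proposal is correct and follows essentially the same approach as the paper: define $\{M_i\}$ as the expansions of $2g$ that separate $x$ from $y$, use \cref{lemm:c_basis_expansion2} for finiteness, invoke \cref{prop:sec_8} for the forward direction, and combine \cref{lemm:c_basis_expansion1}, \cref{thm:intersection_thm}, and the separation lemma for the backward direction. Your added commentary on why the doubling is needed is accurate and goes slightly beyond what the paper spells out, but the logical skeleton is identical.
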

\begin{proof}
	Let $g$ be the polymatroid constructed from $M$ in \cref{thm:regularization}. Among the expansions of $2g$ take $\{M_i\}_{i=1}^n$ to be all those which separate $x$ from $y$. There are finitely many of these, since $2g$ has finitely many expansions (see the remark following \cref{lemm:c_basis_expansion2}). By \cref{lemm:c_basis_expansion1} a representation of any $M_i$ as a $c$-arrangement gives a representation of the polymatroid $2c\cdot g$. This yields a weak $2c$-representation $\A=\{A_e\}_{e\in E}$ of $M$ by the intersection procedure of \cref{thm:intersection_thm}. Since $M_i$ is an expansion of $2g$ separating $x,y$, $A_x \neq A_y$.
	
	Conversely, if $M$ 	has a weak $c$-representation $\A = \{A_e\}_{e\in E}$ with $A_x \neq A_y$, then \cref{prop:sec_8}
	implies that there is a $c$-arrangement representing an expansion of $2g$ which separates $x,y$, and this is a $c$-arrangement representation of $M_i$ for some $1\le i\le n$.
\end{proof}

\section{Proof of \Cref{thm:undecidable_matroids}}\label{sec:main_proof}

In this section we connect our previous results to prove \Cref{thm:undecidable_matroids}.

\begin{theorem}
	For each instance of the uniform word problem for finite groups, there exists a finite set of matroids $\{M_1,\ldots,M_n\}$ (computable from the given instance of the problem,) such that at least one of them is representable as a $c$-arrangement if and only if the answer to the given instance of the UWPFG is negative.
\end{theorem}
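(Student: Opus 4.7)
The plan is to assemble the machinery from the previous sections into a short chain of equivalences. First I would apply \Cref{lem:three_relations} to replace the given UWPFG instance $\langle S\mid R\rangle, w$ with an equivalent one in which every relation has length three and $w\in S$; this reduction is algorithmic. Then, from this normal-form instance, construct the generalized Dowling geometry $M\coloneqq N_{S,R}$ of \Cref{def:matroid_N} on the distinguished basis $B=\{b^{(1)},b^{(2)},b^{(3)}\}$. Note that both $w^{(1)}$ and $e^{(1)}$ are bottom elements (they lie on the flat spanned by $\{b^{(1)},b^{(2)}\}$), so they are eligible inputs to \Cref{thm:sec_8}.

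Next I would translate the undecidable condition from \Cref{theo:weak_undecidable} into a subspace-inequality condition. By that theorem, the UWPFG instance has a negative answer iff there exist $c\ge 1$ and a weak $c$-representation $\A=\{A_e\}_{e\in E_{S,R}}$ of $M$ with respect to $B$ satisfying $r_{\A}^c(\{w^{(1)},e^{(1)}\})>1$. Since $A_{w^{(1)}}$ and $A_{e^{(1)}}$ are both $c$-dimensional, their sum has dimension either $c$ or strictly greater; equivalently,
\[
r_{\A}^c(\{w^{(1)},e^{(1)}\})>1 \iff A_{w^{(1)}}\neq A_{e^{(1)}}.
\]
So the UWPFG instance has negative answer iff $M$ admits, for some $c$, a weak $c$-representation $\A$ in which $A_{w^{(1)}}\neq A_{e^{(1)}}$.

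The second equivalence is then exactly the content of \Cref{thm:sec_8} applied to $x=w^{(1)}$ and $y=e^{(1)}$: it produces a finite set $\{M_1,\ldots,M_n\}$ of matroids such that $M$ has a weak $c$-representation with $A_{w^{(1)}}\neq A_{e^{(1)}}$ for some $c$ iff at least one $M_i$ is representable as a $c'$-arrangement for some $c'\ge 1$. Chaining the two equivalences proves the ``iff'' part of the theorem.

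The only thing that remains is computability of $\{M_1,\ldots,M_n\}$ from the UWPFG instance, and this follows by tracing the constructions. The polymatroid $g$ extending $M$ is built in \Cref{thm:regularization} by finitely many combinatorial inflations, each a purely combinatorial operation on rank functions as in \Cref{def:comb_reg}; so $g$ is computable from $M$. The expansions of $2g$ form a finite computable set, as they are weak images of the (computable) free expansion $\mathscr{F}(2g)$ by \Cref{lemm:c_basis_expansion2}. Finally, the separation condition used in \Cref{thm:sec_8} is a decidable combinatorial condition on expansions (it consists of two explicit rank equalities, as recorded in \Cref{prop:comb_separation}), so selecting those expansions $M_i$ of $2g$ that separate $w^{(1)}$ from $e^{(1)}$ is effective. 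No step of the plan presents a substantive obstacle: all the real work has already been done in Sections~\ref{sec:staudt}--\ref{sec:c-bases}, and the present theorem is their direct corollary.
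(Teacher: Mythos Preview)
Your proposal is correct and follows essentially the same approach as the paper's proof: construct $N_{S,R}$, invoke \Cref{thm:sec_8} with $x=w^{(1)}$, $y=e^{(1)}$, and chain with \Cref{theo:weak_undecidable}. You spell out more than the paper does---in particular the equivalence $r_{\A}^c(\{w^{(1)},e^{(1)}\})>1\iff A_{w^{(1)}}\neq A_{e^{(1)}}$ and the computability of $\{M_i\}$---but these are straightforward elaborations of steps the paper leaves implicit.
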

\begin{proof}
	Let $\langle S \mid R\rangle$ be a finite presentation of a group and $w\in S$. Let $N_{S,R}$ be the corresponding matroid on the ground set $E_{S,R}$ with distinguished basis $B = \{b^{(1)},b^{(2)},b^{(3)}\}$, as constructed in \cref{def:matroid_N}. Let $M_1, \ldots, M_n$ be the matroids on $E_{S,R}$ constructed from $N_{S,R}$ and $w^{(1)}, e^{(1)} \in E_{S,R}$ in \cref{thm:sec_8}.
	
	By \cref{thm:sec_8}, at least one of the matroids $M_1,\ldots,M_n$ is representable as a $c$-arrangement if and only if there exists a weak $c$-representation $\A = \{A_e\}_{e\in E_{S,R}}$ of $N_{S,R}$ such that $A_{w^{(1)}} \neq A_{e^{(1)}}$. This occurs if and only if the solution to the UWPFG instance is negative by \cref{theo:weak_undecidable}.
\end{proof}

Hence by Slobodskoi's theorem~\cite{Slo81}, existence of $c$-arrangement representations of matroids is undecidable.

\bibliographystyle{myalpha}
\bibliography{matroid.bib}

\providecommand{\bysame}{\leavevmode\hbox to3em{\hrulefill}\thinspace}
\providecommand{\MR}{\relax\ifhmode\unskip\space\fi MR }
\providecommand{\MRhref}[2]{%
  \href{http://www.ams.org/mathscinet-getitem?mr=#1}{#2}
}
\providecommand{\href}[2]{#2}
\begin{thebibliography}{BBEPT14}

\bibitem[ANLY00]{Ahl00}
R.~{Ahlswede}, {Ning Cai}, S.~.~R. {Li}, and R.~W. {Yeung}, \emph{Network
  information flow}, IEEE Transactions on Information Theory \textbf{46}
  (2000), no.~4, 1204--1216.

\bibitem[BBEPT14]{BBP14}
Amos Beimel, Aner Ben-Efraim, Carles Padr\'{o}, and Ilya Tyomkin,
  \emph{Multi-linear secret-sharing schemes}, Theory of cryptography, Lecture
  Notes in Comput. Sci., vol. 8349, Springer, Heidelberg, 2014, pp.~394--418.

\bibitem[BD91]{BD91}
Ernest~F. Brickell and Daniel~M. Davenport, \emph{On the classification of
  ideal secret sharing schemes}, Journal of Cryptology \textbf{4} (1991),
  no.~2, 123--134.

\bibitem[Bj{\"o}94]{Bjo92}
Anders Bj{\"o}rner, \emph{Subspace arrangements}, First {E}uropean {C}ongress
  of {M}athematics, {V}ol.\ {I} ({P}aris, 1992), Progr. Math., vol. 119,
  Birkh\"auser, Basel, 1994, pp.~321--370.

\bibitem[DFZ07]{DFZ07}
R.~{Dougherty}, C.~{Freiling}, and K.~{Zeger}, \emph{Networks, matroids, and
  non-shannon information inequalities}, IEEE Transactions on Information
  Theory \textbf{53} (2007), no.~6, 1949--1969.

\bibitem[Dow73]{Dow73}
Thomas~A. Dowling, \emph{A class of geometric lattices based on finite groups},
  Journal of Combinatorial Theory, Series B \textbf{14} (1973), no.~1, 61 --
  86.

\bibitem[ESG10]{ElR10}
S.~{El Rouayheb}, A.~{Sprintson}, and C.~{Georghiades}, \emph{On the index
  coding problem and its relation to network coding and matroid theory}, IEEE
  Transactions on Information Theory \textbf{56} (2010), no.~7, 3187--3195.

\bibitem[Fuj78]{Fuj78}
Satoru Fujishige, \emph{Polymatroidal dependence structure of a set of random
  variables}, Information and Control \textbf{39} (1978), 55--72.

\bibitem[GM88]{GM88}
Mark Goresky and Robert MacPherson, \emph{Stratified {M}orse theory},
  Ergebnisse der Mathematik und ihrer Grenzgebiete (3), vol.~14,
  Springer-Verlag, Berlin, 1988.

\bibitem[KPY]{KPY20}
Lukas Kühne, Rudi Pendavingh, and Geva Yashfe, \emph{On the comparison of
  skew-linear and multilinear matroids}, in preparation.

\bibitem[LS77]{LS77}
Roger~C. Lyndon and Paul~E. Schupp, \emph{Combinatorial group theory},
  Springer-Verlag, Berlin-New York, 1977, Ergebnisse der Mathematik und ihrer
  Grenzgebiete, Band 89.

\bibitem[Mal40]{Mal40}
Anatoly Malcev, \emph{On isomorphic matrix representations of infinite groups},
  Rec. Math. [Mat. Sbornik] N.S. \textbf{8 (50)} (1940), 405--422.

\bibitem[{Mat}99]{Mat93}
Franti\v{s}ek {Mat\'{u}\v{s}}, \emph{Matroid representations by partitions},
  Discrete Math. \textbf{203} (1999), no.~1-3, 169--194.

\bibitem[Mn{\"e}88]{Mne88}
Nikolai~E. Mn{\"e}v, \emph{The universality theorems on the classification
  problem of configuration varieties and convex polytopes varieties}, Topology
  and geometry---{R}ohlin {S}eminar, Lecture Notes in Math., vol. 1346,
  Springer, Berlin, 1988, pp.~527--543.

\bibitem[Ngu86]{Ngu86}
Hien~Q. Nguyen, \emph{Semimodular functions}, Encyclopedia of Mathematics and
  its Applications, p.~272–297, Cambridge University Press, 1986.

\bibitem[Oxl11]{Oxl11}
James Oxley, \emph{Matroid theory}, second ed., Oxford Graduate Texts in
  Mathematics, vol.~21, Oxford University Press, Oxford, 2011.

\bibitem[PvZ13]{PvZ13}
Rudi~A. Pendavingh and Stefan~H.M. van Zwam, \emph{Skew partial fields,
  multilinear representations of matroids, and a matrix tree theorem}, Advances
  in Applied Mathematics \textbf{50} (2013), no.~1, 201 -- 227.

\bibitem[Rad57]{Rad57}
Richard Rado, \emph{Note on independence functions}, Proc. London Math. Soc.
  (3) \textbf{7} (1957), 300--320.

\bibitem[SA98]{SA98}
Juriaan Simonis and Alexei Ashikhmin, \emph{Almost affine codes}, Designs,
  Codes and Cryptography \textbf{14} (1998), no.~2, 179--197.

\bibitem[Slo81]{Slo81}
A.~M. Slobodskoi, \emph{Unsolvability of the universal theory of finite
  groups}, Algebra and Logic \textbf{20} (1981), no.~2, 139--156.

\bibitem[Stu87]{Stu87}
Bernd Sturmfels, \emph{On the decidability of {D}iophantine problems in
  combinatorial geometry}, Bull. Amer. Math. Soc. (N.S.) \textbf{17} (1987),
  no.~1, 121--124.

\bibitem[Zas94]{Zas94}
Thomas Zaslavsky, \emph{Frame matroids and biased graphs}, European Journal of
  Combinatorics \textbf{15} (1994), no.~3, 303 -- 307.

\end{thebibliography}

\end{document}
